\theoremstyle{plain}
\newtheorem*{claim}{Claim}
\newtheorem*{Step1}{Step 1}
\newtheorem*{Step2}{Step 2}
\newtheorem*{Step3}{Step 3}
\newtheorem{theorem}{Theorem}[section]
\newtheorem*{thm}{Theorem}
\newtheorem{proposition}[theorem]{Proposition}
\newtheorem{lemma}[theorem]{Lemma}
\newtheorem{corollary}[theorem]{Corollary}
\theoremstyle{definition}
\newtheorem*{remark}{Remark}
\newtheorem*{remarks}{Remarks}
\newtheorem*{problem}{Problem}
\newtheorem*{mainresult}{Main Result}
\newtheorem{definition}[theorem]{Definition}
\newtheorem*{defn}{Definition}
\theoremstyle{remark}
\newtheorem*{Open Question}{Open Question}
\newtheorem{example}{Example}
\begin{document}
\def\A{\mathcal{A}}
\def\Aut{\text{Aut\,}}
\def\Ann{\text{Ann\,}}
\def\Bl{\text{\rm Bl}}
\def\codim{\text{\rm codim}}
\def\Gal{\text{Gal\,}}
\def\tphi{\tilde{\phi}}
\def\dim{\text{dim\,}}
\def\characteristic{\text{characteristic\,}}
\def\discrep{\text{discrep}}
\def\Def{\text{Def\,}}
\def\LDef{\overline{\text{Def}}\,}
\def\Tbar{\underline{T}}
\def\length{\text{\rm length}}
\def\B{\mathcal{B}}
\def\C{\mathcal{C}}
\def\D{\text{Def\,}}
\def\div{\text{\rm div}}
\def\e{\epsilon}
\def\ev{\text{\rm ev}}
\def\E{\mathcal{E}}
\def\Eq{\text{Eq\,}}
\def\Exc{\text{Exc\,}}
\def\Eff{\text{Eff\,}}
\def\Ext{\mathscr{E}\!xt}
\def\F{\mathscr{F}}
\def\Frac{\text{Frac\,}}
\def\G{\mathscr{G}}
\def\H{\mathscr{H}}
\def\Hom{\mathscr{H}\!om}
\def\hom{\text{\rm Hom}}
\def\I{\mathscr{I}}
\def\Ind{\text{Indet\,}}
\def\Image{\text{\rm Image}}
\def\Isom{\text{\rm Isom}}
\def\id{\text{\rm id}}
\def\im{\text{im\,}}
\def\J{\mathscr{J}}
\def\K{\mathscr{K}}
\def\Ker{\text{\rm Ker}}
\def\L{\mathscr{L}}
\def\M{\overline{M}}
\def\Moduli{\text{\underline{Attaching Moduli}}}
\def\Maps{\text{\underline{Attaching Maps}}}
\newcommand{\Mod}[2][n]{\ensuremath{\overline{\mathcal{M}}_{1,#1}(#2)}}
\newcommand{\m}{\ensuremath{m}}
\def\SM{\overline{\mathcal{M}}}
\def\NSM{\widetilde{\mathcal{M}}}
\def\NM{\widetilde{M}}
\def\NE{\overline{\text{NE}}}
\def\Nef{\text{Nef}}
\def\O{\mathscr{O}}
\def\R{\mathcal{R}}
\def\T{\mathcal{T}}
\def\ttau{\tilde{\tau}}
\def\P{\mathbb{P}}
\def\Q{\mathbb{Q}}
\def\X{\mathcal{X}}
\def\U{\mathcal{U}}
\def\V{\mathcal{V}}
\def\Z{\mathcal{Z}}
\def\pic{\text{\rm Pic}\,}
\def\S{\mathcal{S}}
\def\Sch{\text{Sch}}
\def\SA{(\mathcal{S}, \mathcal{A})}
\def\red{_{\text{\rm red}}}
\def\Res{\text{\rm Res}}
\def\Spec{\text{\rm Spec\,}}
\def\Proj{\text{\rm Proj\,}}
\def\Supp{\text{\rm Supp\,}}
\def\Mor{\text{\rm Mor}}
\def\Pic{\text{\rm Pic\,}}
\def\Ver{\text{\rm Ver}}

\title[Modular compactifications of $\mathcal{M}_{1,n}$]{Modular compactifications of $\mathcal{M}_{1,n}$ I: \\Construction of $\SM_{1,\A}(m)$}
\author{David Ishii Smyth}

\maketitle
\begin{abstract}
We introduce a sequence of isolated curve singularities, the elliptic $m$-fold points, and an associated sequence of stability conditions, generalizing the usual definition of Deligne-Mumford stability. For every pair of integers $1 \leq m < n$, we prove that the moduli problem of $n$-pointed $m$-stable curves of arithmetic genus one is representable by a proper irreducible Deligne-Mumford stack $\SM_{1,n}(m)$. We also consider weighted variants of these stability conditions, and construct the corresponding moduli stacks $\SM_{1,\A}(m)$. In forthcoming work, we will prove that these stacks have projective coarse moduli and use the resulting spaces to give a complete description of the log minimal model program for $\M_{1,n}$.\\
\end{abstract}

\tableofcontents
\pagebreak

\section{Introduction}
\subsection{Why genus one?}
One of the most beautiful and influential theorems of modern algebraic geometry is

\begin{thm}[Deligne-Mumford \cite{DM}] The moduli stack of stable curves of arithmetic genus $g \geq 2$ is a smooth proper Deligne-Mumford stack over $\Spec(\mathbb{Z})$.
\end{thm}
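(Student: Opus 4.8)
The plan is to realize $\SM_g$ as a global quotient stack and then extract the three properties—being Deligne--Mumford, smoothness over $\mathbb{Z}$, and properness over $\mathbb{Z}$—from deformation theory and the valuative criterion. The starting point is that a stable curve $C$ of genus $g\geq 2$ is canonically polarized: $\omega_C$ is ample, and for every $\nu\geq 3$ the sheaf $\omega_C^{\otimes\nu}$ is very ample with $H^1(C,\omega_C^{\otimes\nu})=0$. Taking $\nu=3$ embeds $C\hookrightarrow\P^N$ with $N=5g-6$ as a curve of Hilbert polynomial $P(t)=(6t-1)(g-1)$, canonically up to the action of $\mathrm{PGL}_{N+1}$. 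Let $H\subseteq\mathrm{Hilb}^{P}(\P^N_{\mathbb{Z}})$ be the locally closed subscheme parametrizing tricanonically embedded stable curves. Then $\mathrm{PGL}_{N+1}$ acts on $H$, every stable curve occurs, the orbits are exactly the isomorphism classes, and the stabilizer of $[C\hookrightarrow\P^N]$ is $\Aut(C)$; hence $\SM_g\cong[H/\mathrm{PGL}_{N+1}]$, which is in particular an algebraic stack of finite type over $\mathbb{Z}$.

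\textbf{Smoothness and the Deligne--Mumford property.} A nodal curve $C$ over an algebraically closed field $k$ is a local complete intersection, with $L_{C/k}\simeq\Omega_{C/k}$, so its deformations are governed by $\mathrm{Ext}^\bullet(\Omega_C,\O_C)$. On the smooth locus $\Omega_C$ is locally free, and at a node $\Omega_C$ admits a length-one projective resolution; thus $\Ext^{q}(\Omega_C,\O_C)=0$ for $q\geq 2$, while $\Ext^1(\Omega_C,\O_C)$ is a skyscraper supported at the nodes. Since $\dim C=1$, the local-to-global spectral sequence forces $\mathrm{Ext}^2(\Omega_C,\O_C)=0$, so deformations are unobstructed and $\SM_g$ is smooth over $\mathbb{Z}$, of relative dimension $3g-3$, the dimension of the deformation space $\mathrm{Ext}^1(\Omega_C,\O_C)$. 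For the Deligne--Mumford property one checks the diagonal is unramified, i.e.\ that stable curves have no infinitesimal automorphisms: the space of global vector fields $\hom(\Omega_C,\O_C)=H^0(C,\Hom(\Omega_C,\O_C))$ pulls back on the normalization to $\bigoplus_i H^0(\widetilde C_i,\,T_{\widetilde C_i}(-D_i))$, with $D_i$ the preimage of the nodes, and this vanishes precisely because stability says $\deg(\omega_C|_{C_i})>0$. Hence $\Aut(C)$—a closed subgroup scheme of $\mathrm{PGL}_{N+1}$ preserving an ample class, so of finite type—has trivial tangent space, and is therefore finite and unramified.

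\textbf{Properness, and where the work lies.} Separatedness is the uniqueness half, and universal closedness the existence half, of the \emph{stable reduction theorem}: given a discrete valuation ring $R$ with fraction field $K$ and a smooth proper genus-$g$ curve $C_K$ over $\Spec K$, there is a finite extension $K'/K$ with valuation ring $R'$ such that $C_{K'}$ extends to a stable curve over $\Spec R'$, uniquely up to canonical isomorphism. Granting this, the valuative criterion holds and $\SM_g\to\Spec\mathbb{Z}$ is proper. The stable reduction theorem is the one genuinely hard input. A characteristic-free proof routes it through Grothendieck's semistable reduction theorem for abelian varieties applied to $\mathrm{Pic}^0(C_K)$: after a suitable base change the N\'eron model acquires semiabelian reduction, and the combinatorics of the dual graph of a nodal curve upgrades this to a semistable model of the curve, which one then contracts—killing the unstable $\P^1$'s—to arrive at the stable model. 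In characteristic $0$ one may instead resolve the singularities of the total space of a degenerating family and perform repeated Castelnuovo-type contractions of $(-1)$- and $(-2)$-curves. Either way, producing the limiting stable curve and proving its uniqueness is by far the most delicate step; algebraicity, smoothness, and the finiteness of automorphisms are comparatively formal consequences of the pluricanonical embedding and the deformation theory of nodes.
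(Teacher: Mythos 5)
This theorem is quoted from Deligne--Mumford \cite{DM} purely as background; the paper contains no proof of it, so there is nothing internal to compare your argument against. Your outline is, in essence, the argument of the cited source and is correct as a sketch: tricanonical embedding into a Hilbert scheme with $\SM_{g}\cong[H/\mathrm{PGL}_{5g-5}]$, unobstructedness of deformations of nodal curves (vanishing of the relevant $\mathrm{Ext}^2$) for smoothness over $\mathbb{Z}$, vanishing of global vector fields for the unramified diagonal, and the stable reduction theorem (via semiabelian reduction of the Jacobian, or resolution and contraction in characteristic $0$) for properness, which you rightly flag as the one genuinely hard input.
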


The essential geometric content of the theorem is the identification of a suitable class of singular curves, namely \emph{Deligne-Mumford stable curves}, with the property that every incomplete one-parameter family of smooth curves has a unique `limit' contained in this class. The definition of a Deligne-Mumford stable curve comprises one local condition and one global condition.

\begin{defn}[Stable curve] A connected, reduced, complete curve $C$ is \emph{stable} if
\begin{itemize}
\item[(1)]$C$ has only nodes as singularities. (Local Condition) 
\item[(2)]$C$ satisfies the following two equivalent conditions. (Global Condition) 
\begin{itemize}
\item[(a)] $H^0(C, \Omega_{C}^{\vee})=0$.
\item[(b)] $\omega_{C}$ is ample.
\end{itemize}
\end{itemize}
\end{defn}

While the class of stable curves gives a natural modular compactification of the space of smooth curves, it is not unique in this respect. Using geometric invariant theory, Schubert constructed a proper moduli space for \emph{pseudostable curves} \cite{S91}.

\begin{defn}[Pseudostable curve] A connected, reduced, complete curve $C$ is \emph{pseudostable} if
\begin{itemize}
\item[(1)]$C$ has only nodes and cusps as singularities. (Local Condition)
\item[(2)] If $E \subset C$ is any connected subcurve of arithmetic genus one,\\ then $|E \cap \overline{C \backslash E}|  \geq 2.$ (Global Condition)
\item[(3)]$C$ satisfies the following two equivalent conditions. (Global Condition) 
\begin{itemize}
\item[(a)] $H^0(C, \Omega_{C}^{\vee})=0$.
\item[(b)] $\omega_{C}$ is ample.
\end{itemize}
\end{itemize}
\end{defn}
Notice that the definition of pseudostability involves a trade-off: the local condition has been weakened to allows cusps, while the global condition has been strengthened to disallow elliptic tails. It is easy to see how this trade-off comes about: As one ranges over all one-parameter smoothings of a cuspidal curve $C$, the associated limits in $\M_{g}$ are precisely curves of the form $\tilde{C} \cup E$, where $\tilde{C}$ is the normalization of $C$ and $E$ is an elliptic curve (of arbitrary $j$-invariant) attached to $\tilde{C}$ at the point lying above the cusp. Thus, any separated moduli problem must exclude either cusps or elliptic tails. In light of Schubert's construction, it is natural to ask

\begin{problem}
Given a reasonable local condition, e.g. a deformation-open collection of isolated curve singularities, is there a corresponding global condition which yields a proper moduli space?
\end{problem}

Any investigation of the above problem should begin by asking: which are the simplest isolated curve singularities? Let $C$ be a reduced curve over an algebraically closed field $k$, $p \in C$ a singular point, and $\pi:\tilde{C} \rightarrow C$ is the normalization of $C$ at $p$. We have two basic numerical invariants of the singularity:
\begin{align*}
\delta&=\dim_{k} (\pi_*\O_{\tilde{C}}/\O_{C}),\\
m&=|\pi^{-1}(p)|.
\end{align*}
$\delta$ may be interpreted as the number of conditions for a function to descend from $\tilde{C}$ to $C$, while $m$ is the number of branches. Of course, if a singularity has $m$ branches, there are $m-1$ obviously necessary conditions for a function $f \in \O_{\tilde{C}}$ to descend: $f$ must have the same value at each point in $\pi^{-1}(p)$. Thus, $\delta-m+1$ is the number of conditions for a function to descend \emph{beyond the obvious ones}, and we take this as the most basic numerical invariant of a singularity.
\begin{defn}
The \emph{genus} of an isolated singularity is $g:=\delta-m+1$.
\end{defn}
We use the name `genus' for the following reason: If $\C \rightarrow \Delta$ is a one-parameter smoothing of an isolated curve singularity $p \in C$, then (after a finite base-change) one may apply stable reduction around $p$ to obtain a proper birational morphism
\[
\xymatrix{
\C^{s} \ar[rr]^{\phi} \ar[dr]&&\C \ar[dl]\\
&\Delta&
}
\]
where $\C^{s} \rightarrow \Delta$ is a nodal curve, and $\phi(\Exc(\phi))=p$. Then it is easy to see that the genus of the isolated curve singularity $p \in C$ is precisely the arithmetic genus of the curve $\phi^{-1}(p)$. Thus, just as elliptic tails are replaced by cusps in Schubert's moduli space of pseudostable curves, any separated moduli problem allowing singularities of genus $g$ must disallow certain subcurves of genus $g$.

The simplest isolated curve singularities are those of genus zero. For each integer $m \geq 2$, there is a unique singularity with $m$ branches and genus zero, namely the union of the $m$ coordinate axes in $\mathbb{A}^{m}$. For our purposes, however, these singularities have one very unappealing feature: for $m \geq 3$, they are not \emph{Gorenstein}. This means that the dualizing sheaf of a curve containing such singularities is not invertible. Thus, a moduli problem involving these singularities has no obvious canonical polarization. For this reason, we choose to focus upon the next simplest singularities, namely those of genus one. It turns out that, for each integer $m \geq 1$, there is a unique Gorenstein curve singularity with $m$ branches and genus one (Proposition \ref{P:genus1}). These are defined below and pictured in Figure \ref{singularitiespic}.
\begin{figure}
\scalebox{.50}{\includegraphics{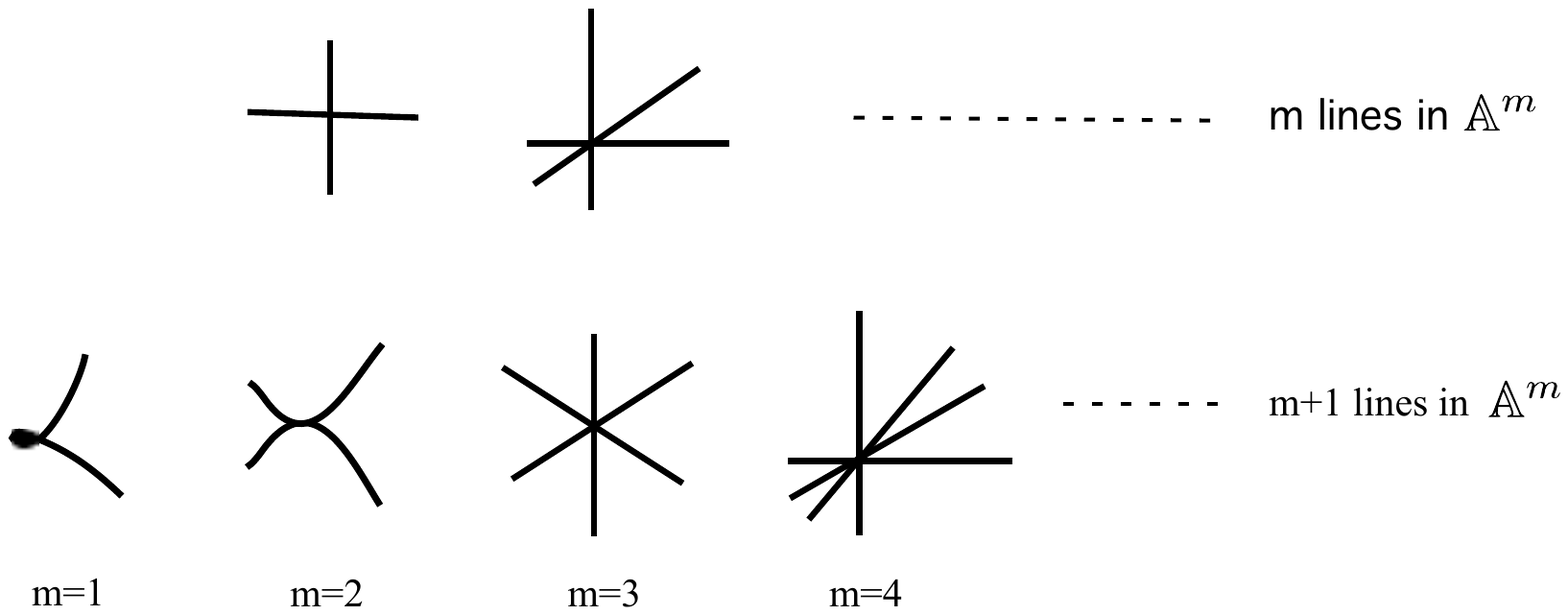}}
\caption{The sequence of elliptic $m$-fold points, the unique Gorenstein singularities of genus one.}\label{singularitiespic}
\end{figure}
\begin{defn}[The elliptic $m$-fold point] We say that $p$ is an \emph{elliptic $m$-fold point} of $C$ if
\begin{align*}
\hat{O}_{C,p} \simeq &
\begin{cases}
k[[x,y]]/(y^2-x^3) & m=1\,\,\,\text{(ordinary cusp)}\\
k[[x,y]]/(y^2-yx^2) & m=2 \,\,\,\text{(ordinary tacnode)} \\
k[[x,y]]/(x^2y-xy^2) & m=3 \,\,\, \text{(planar triple-point)}\\
k[[x_1, \ldots, x_{m-1}]]/I_m & m \geq 4, \text{($m$ general lines through the origin in $\mathbb{A}^{m-1}$),}\\
\end{cases}\\
&\,\,\,\,\, I_{m}:= \left(  x_{h}x_i-x_{h}x_j \, : \,\, i,j,h \in \{1, \ldots, m-1\} \text{ distinct} \right).
\end{align*}
\end{defn}

We will show that if $C$ is a curve with a single elliptic $m$-fold point $p$, then, as one ranges over all one-parameter smoothings of $C$, the associated limits in $\M_{g}$ are curves of the form $\tilde{C} \cup E$, where $\tilde{C}$ is the normalization of $C$ and $E$ is any stable curve of arithmetic genus one attached to $\tilde{C}$ at the points lying above $p$. Following Schubert, one is now tempted to define a sequence of moduli problems in which certain arithmetic genus one subcurves are replaced by elliptic $m$-fold points.

\begin{figure}
\scalebox{.50}{\includegraphics{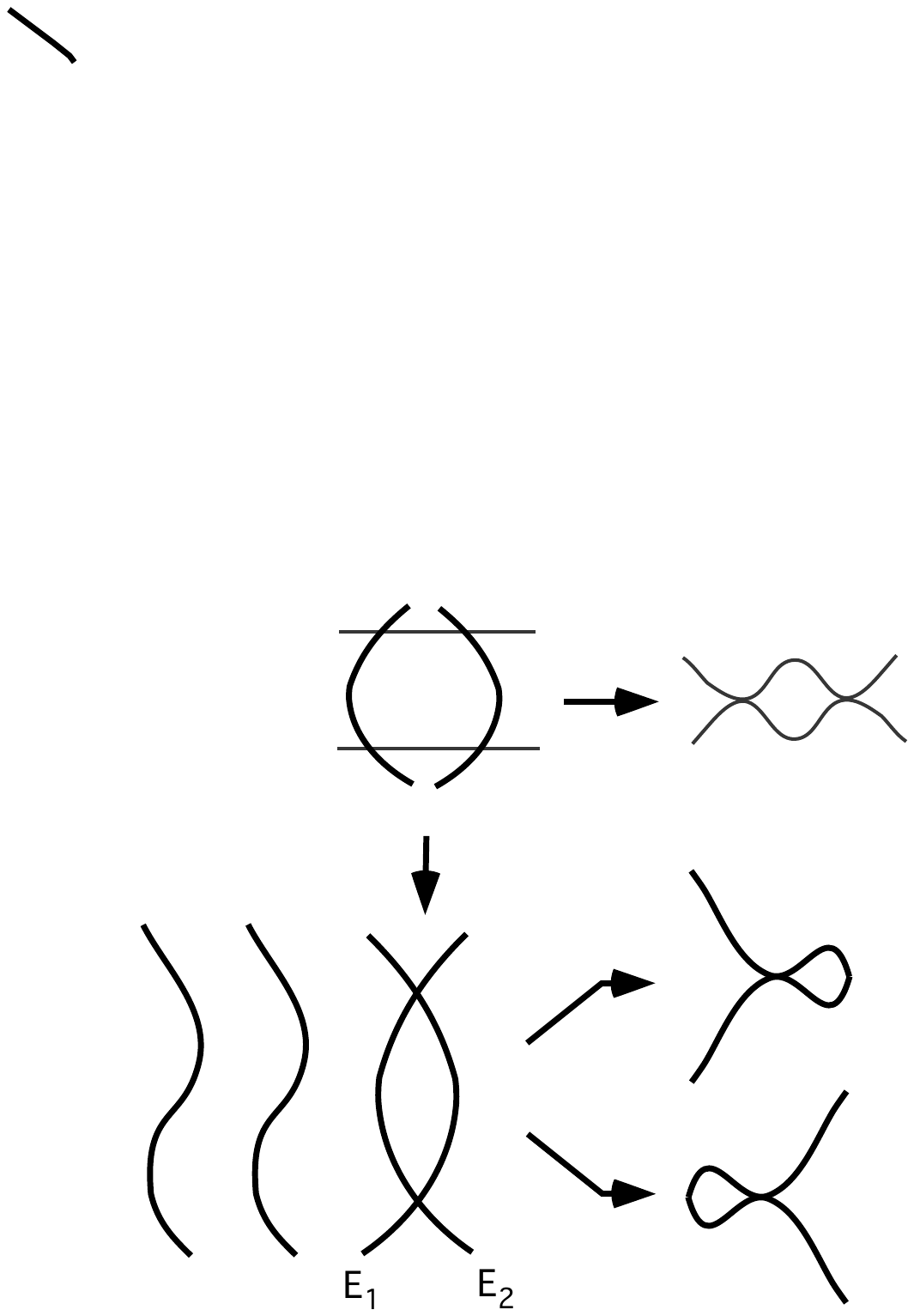}}
\caption{Three candidates for the `2-stable limit' of a one-parameter family of genus three curves specializing to a pair of elliptic bridges.}\label{F:manylimits}
\end{figure}

The idea seems plausible until one encounters the example pictured in figure \ref{F:manylimits}. There we see a one-parameter family of smooth genus three curves specializing to a pair of elliptic bridges, and we consider the question: How can one modify the special fiber to obtain a `tacnodal limit' for this family? Assuming the total space of the family is smooth, one can contract either $E_1$ or $E_2$ to obtain two non-isomorphic tacnodal special fibers, but there is no canonical way to distinguish between these two limits. A third possibility is to blow-up the two points of intersection $E_1 \cap E_2$, make a base-change to reduce the multiplicities of the exceptional divisors, and then contract \emph{both} elliptic bridges to obtain a bi-tacnodal limit whose normalization comprises a pair of smooth rational curves. This limit curve certainly appears canonical, but it has an infinite automorphism group and contains the other two pictured limits as deformations. This example suggests that a systematic handling of mildly non-separated moduli functors, either via the formalism of geometric invariant theory or Artin stacks, will be necessary in order to proceed at this level of generality. (See \cite{H:tacnodes} for a geometric invariant theory construction involving tacnodal curves.)

Happily, there is one non-trivial case in which this difficulty of multiple interacting elliptic components does not appear, namely the case of $n$-pointed stable curves of arithmetic genus one. This leads us to make the definition

\begin{defn}[$m$-stability]
Fix positive integers $m < n$. Let $C$ be a connected, reduced, complete curve of arithmetic genus one, let $p_1, \ldots, p_n$ be $n$ distinct smooth marked points, and let $\Sigma \subset C$ be the divisor $\sum_{i}p_i$. We say that $(C,p_1, \ldots, p_n)$ is \emph{$m$-stable} if
\begin{itemize}
\item[(1)] $C$ has only nodes and elliptic $l$-fold points, $l \leq m$, as singularities.
\item[(2)] If $E \subset C$ is any connected subcurve of arithmetic genus one, then $$|E \cap \overline{C \backslash E}|+|E \cap \Sigma|  >m.$$
\item[(3)] $H^0(C, \Omega_{C}^{\vee}(-\Sigma))=0.$
\end{itemize}
\end{defn}

\begin{figure}
\scalebox{.70}{\includegraphics{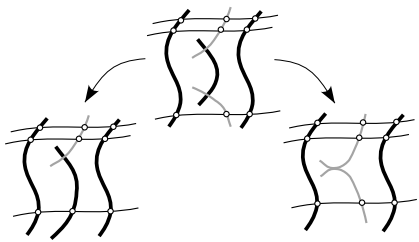}}
\caption{The 2-stable limit of a family of smooth curves acquiring an elliptic tail with marked point.}\label{F:simplelimit}
\end{figure}

\begin{figure}
\scalebox{.70}{\includegraphics{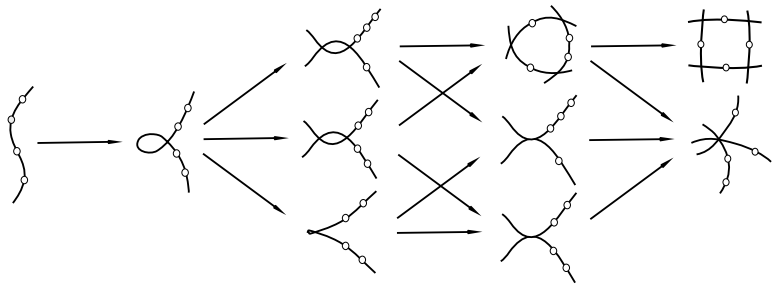}}
\caption{Equisingular stratification of $\M_{1,4}(3)$.}\label{F:BoundaryM14}
\end{figure}
\begin{remarks}
\begin{itemize}
\item[]
\item[(1)] The reason for considering $|E \cap \overline{C \backslash E}|+|E \cap \Sigma|$ rather than simply $|E \cap \overline{C \backslash E}|$ stems from the necessity of keeping marked points distinct. If, for example, one wishes to allow tacnodes into the moduli problem, one must disallow not only elliptic bridges, but also elliptic tails containing a single marked point. To obtain the 2-stable limit of the family of smooth curves whose Deligne-Mumford stable limit contains an elliptic tail with marked point, we blow-up the marked point lying on the elliptic component, and then contract the strict transform of this component to obtain a tacnodal special fiber. This process is pictured in Figure \ref{F:simplelimit}, where irreducible components of arithmetic genus one are pictured in black, while components of arithmetic genus zero are pictured in grey.
\item[(2)] The condition that $(C,p_1, \ldots, p_n)$ have no infinitesimal automorphisms is \emph{not} simply that every smooth rational component have three distinguished points. Furthermore, while
$$H^0(C, \Omega_{C}^{\vee}(-\Sigma))=0 \implies \omega_{C}(\Sigma) \text{ is ample,}$$ these conditions are not equivalent. These issues are addressed in Definition \ref{D:mstable}, where we reformulate the condition $H^0(C, \Omega_{C}^{\vee}(-\Sigma))=0$ in terms of distinguished points on each irreducible component of $\tilde{C}$.
\item[(3)] In order to provide some visual intuition for the definition of $m$-stability, we have supplied a diagram of the topological types (more precisely, the equisingularity classes) of $4$-pointed $3$-stable curves. In Figure \ref{F:BoundaryM14}, every irreducible component of every singular curve is rational, and the arrows indicate specialization relations between the various topological types. We leave it as an exercise to the reader (one which should be much easier after reading this paper) to verify that the pictured curves comprise \emph{all} topological types of 4-pointed 3-stable curves.
 \end{itemize}
\end{remarks}

The definition of $m$-stability is compatible with the definition of $\A$-stability introduced by Hassett \cite{Hassett4}, in which sections of low weight are allowed to collide. More precisely, we have

\begin{defn}[$(m,\A)$-stability]
Fix positive integers $m < n$, and an $n$-tuple of rational weights $\A=(a_1, \ldots, a_n) \in (0,1]^n$. Let $C$ be a connected, reduced, complete curve of arithmetic genus one, let $p_1, \ldots, p_n \in C$ be smooth (not necessarily distinct) points of $C$, and let $\Sigma$ denote the support of the $\Q$-divisor $\sum_{i}a_ip_i$. We say that $(C,p_1, \ldots, p_n)$ is \emph{$(m, \A)$-stable} if
\begin{itemize}
\item[(1)] $C$ has only nodes and elliptic $l$-fold points, $l\leq m$, as singularities.
\item[(2)] If $E \subset C$ is any connected subcurve of arithmetic genus one, then $$|E \cap \overline{C \backslash E}|+|E \cap \Sigma|  >m.$$
\item[(3)] $H^0(C, \Omega_{C}^{\vee}(-\Sigma))=0.$
\item[(4)] If $p_{i_1}=\ldots=p_{i_k} \in C$ coincide, then $\sum_{j=1}^{k}a_{i_j} \leq 1.$ 
\item[(5)] $\omega_{C}(\Sigma_ia_ip_i)$ is an ample $\Q$-divisor.
\end{itemize}
\end{defn}

The definition of an $(m,\A)$-stable curve extends to a moduli functor in the usual way, and we obtain

\begin{mainresult}
$\SM_{1,\A}(m)$, the moduli stack of $(m, \A)$-stable curves, is a proper irreducible Deligne-Mumford stack over $\Spec \mathbb{Z}[1/6]$.
\end{mainresult}
\begin{remark}
The restriction to $\Spec \mathbb{Z}[1/6]$ is due to the existence of `extra' infintesimal automorphisms of cuspidal curves in characteristics two and three, a phenomenon which is addressed in section \ref{S:tangentsheaf}.
\end{remark}

A major impetus for studying alternate compactifications of moduli spaces of curves comes from the program introduced by Brendan Hassett \cite{H2}, where one seeks modular descriptions for certain log-canonical models of $\M_{g,n}$. While special cases of this program have been worked out using geometric invariant theory \cite{H05,H:tacnodes,HL}, our construction gives the first example of an infinite sequence of singularities giving rise to alternate stability conditions. Our methods are also quite different from \cite{H05, H:tacnodes, HL} in that, rather than rely on geometric invariant theory to dictate our choice of moduli problem, we undertake a sufficiently in-depth investigation of the elliptic $m$-fold point to make moduli problems involving these singularities accessible via standard stack-theoretic techniques.  Our long-term goal is a systematic classification of alternate compactifications of $M_{g,n}
$ for all $g$.

In forthcoming work, we will study $\SM_{1,\A}(m)$ in the framework of birational geometry  \cite{me2}. In particular, we will develop techniques for doing intersection theory on $\SM_{1,\A}(m)$, construct explicit ample divisors on the associated coarse moduli spaces $\M_{1,\A}(m)$, and show that the rational maps $\M_{1,\A}(m) \dashrightarrow \M_{1,\A}(m+1)$ are Mori flips. This will enable us to give a complete description of the log minimal model program for $\M_{1,n}$, i.e. for all $\alpha \in \Q \cap [0,1]$ such that $K_{\SM_{1,n}}+\alpha\Delta$ is big, we will show that
$$\M_{1,n}(\alpha):=\Proj \oplus_{m>>0} H^0(\SM_{1,n}, m(K_{\SM_{1,n}} +\alpha\Delta))$$
is the normalization of the coarse moduli space of one of the moduli problems $\SM_{1,\A}(m)$ introduced in this paper. 

\subsection{Outline of results} In section 2, we investigate local properties of the elliptic $m$-fold point which are necessary for the construction of moduli. In section \ref{S:tangentsheaf}, we show that sections of $\Omega_{C}^{\vee}$  around an elliptic $m$-fold point $p \in C$ are given by regular vector fields on the normalization which vanish \emph{and} have identical first derivatives at the points lying above $p$. This will allow us to translation the condition $H^0(C, \Omega_{C}^{\vee}(-\Sigma))=0$ into a concrete statement involving the number of distinguished points on each irreducible component of $C$. In section \ref{S:dualizingsheaf}, we show that $\omega_{C}$ is invertible around an elliptic $m$-fold point $p$, and is generated by a rational differential on $\tilde{C}$ with double poles along the points lying above $p$. This implies that $\omega_{C}$ (twisted by marked points) is ample on any $m$-stable curve so that our moduli problem is canonically polarized. In section \ref{S:stablelimits}, we classify the collection of all `semistable tails' (Definition \ref{D:tail}) obtained by performing semistable reduction on the elliptic $m$-fold point (note that our definition of semistable reduction stipulates that the total space should be smooth). This set can be considered as an invariant associated to any smoothable isolated curve singularity. While the aforementioned fact that all $m$-pointed stable curves of genus one arise as stable limits of the elliptic $m$-fold point is an easy consequence of our analysis, we emphasize that we are classifying semistable limits, not merely stable limits, and this keeps track of extra information. For instance, the indices of the $A_{n}$-singularities appearing on the total space of the stable reduction are tracked by the length of the semistable chains appearing in the semistable reduction. These semistable limits turn out to satisfy a very delicate property: they are \emph{balanced} (Proposition \ref{P:semistablelimits}). This will be the key point in verifying that the moduli space of $m$-stable curves is separated.

In section 3, we construct $\SM_{1,\A}(m)$, the moduli space of $(m, \A)$-stable curves as a Deligne-Mumford stack over $\Spec \mathbb{Z}[1/6]$. In section \ref{S:fundecomp}, we prove some elementary topological facts about a reduced connected Gorenstein curve $C$ of arithmetic genus one. The key point is that $C$ admits a decomposition
$$C=Z \cup R_{1} \cup \ldots \cup R_{k},$$
where $Z$ is the unique connected arithmetic genus one subcurve of $C$ with no disconnecting nodes, and $R_{1}, \ldots, R_{k}$ are connected nodal curves of arithmetic genus zero (i.e. trees of $\P^{1}$'s). Furthermore, if $C$ possesses an elliptic $l$-fold point $p$, then $p$ is the unique non-nodal singularity of $C$, and $Z$ consists of $l$ smooth rational curves meeting at $p$. We call $Z$ the \emph{minimal elliptic subcurve of $C$} and its uniqueness is the essential reason that we can formulate a good moduli problem for genus one curves, but not in higher genus. In section \ref{S:mstable}, we define the moduli problem of $(m,\A)$-stable curves and prove that it is bounded and deformation-open. Following standard arguments, we obtain a moduli stack $\SM_{1,\A}(m)$. 

In sections \ref{S:ValuativeCriterion} and \ref{S:mAstability}, we verify the valuative criterion for $\SM_{1,\A}(m)$. In section \ref{S:ValuativeCriterion}, we verify the valuative criterion for $\SM_{1,n}(m)$, i.e. in the special case where $\A=(1, \ldots, 1)$. To show that one-parameter families of smooth curves possess an $m$-stable limit, we start from a Deligne-Mumford semistable limit and construct an explicit sequence of blow-ups and contractions which transforms the special fiber into an $m$-stable curve. To show that $m$-stable limits are unique, we consider two $m$-stable curves $\C_{1}/\Delta$ and $\C_{2}/\Delta$ with isomorphic generic fiber, and a semistable curve $\C^{ss}/\Delta$ which dominates both. Using the results of section \ref{S:stablelimits} on semistable tails of the elliptic $m$-fold point, we prove that the exceptional locus of $\phi_{1}: \C^{ss} \rightarrow \C_{1}$ is the equal to the exceptional locus of $\phi_{2}:\C^{ss} \rightarrow \C_{2}$, so $\C_{1} \simeq \C_{2}$ as desired. Finally, in section \ref{S:mAstability}, we explain how to produce the $(m,\A)$-stable limit by starting with the $m$-stable limit and running a relative minimal model program with respect to $\omega_{\C/\Delta}(\sum_{i}a_ip_i)$. This procedure is exactly analagous to Hassett's construction of the $\A$-stable limit starting from the Deligne-Mumford stable limit \cite{Hassett4}.  It would be interesting to have an interpretation of the valuative criterion for $\SM_{1,n}(m)$ as a relative-MMP with respect to a certain line-bundle on the universal curve $\C \rightarrow \SM_{1,n}$, but the author is not aware of such an interpretation.

In appendix A, we prove that the only isolated Gorenstein singularities that can occur on a reduced curve of arithmetic genus one are nodes and elliptic $l$-fold points. The proof is pure commutative algebra: we simply classify all one-dimensional complete local rings with the appropriate numerical invariants. The result plays a crucial simplifying role throughout the paper. Using this classification, for example, one does not need any `serious' deformation theory to see that only nodes and elliptic $l$-fold points, $l \leq m$, can occur as deformations of the elliptic $m$-fold point. Another fact that we use repeatedly is that if one contracts a smooth elliptic curve in the special fiber of a flat family of curves with smooth total space, the image of the elliptic curve in the new special fiber is an elliptic $m$-fold point. Using  Lemma \ref{L:Contraction}, this is an easy consequence of our classification.

\subsection{Notation}\label{S:notation} 
A \emph{curve} is a reduced connected 1-dimensional scheme of finite-type over an algebraically closed field. Starting in section three, all curves will be assumed complete (this assumption is irrelevant in section two, which is essentially a local study). An \emph{$n$-pointed curve} is a curve $C$, together with $n$ smooth marked points $p_1, \ldots, p_n \in C$ (not necessarily distinct). If $(C,p_1, \ldots, p_n)$ is an $n$-pointed curve, and $F \subset C$ is an irreducible component, we say that a point of $F$ is \emph{distinguished} if it is a marked point or a singular point. In addition, if $\tilde{C}$ is the normalization of $C$, we say that a point of $\tilde{C}$ is distinguished if it lies above a marked point or a singular point of $C$. An $n$-pointed curve is \emph{nodal} if it has only nodes as singularities and the marked points are distinct. An $n$-pointed curve is \emph{semistable} (resp. \emph{stable}) if it is nodal and every rational component of $\tilde{C}$ has at least two (resp.) three distinguished points.

A reduced curve is automatically Cohen-Macaulay, and therefore possesses a dualizing sheaf $\omega_{C}$. We say that $C$ is \emph{Gorenstein} if $\omega_{C}$ is invertible. Note that any flat, projective, finitely-presented morphism $X \rightarrow T$ whose geometric fibers are Cohen-Macaulay admits a relative dualizing sheaf $\omega_{X/T}$, whose formation commutes with base-change \cite{Kleiman}. In particular, if the geometric fibers of $X \rightarrow T$ are Gorenstein curves, then $\omega_{X/T}$ is invertible.

$\Delta$ will always denote the spectrum of a discrete valuation ring $R$ with algebraically closed residue field $k$ and field of fractions $K$. When we speak of a finite base-change $\Delta' \rightarrow \Delta$, we mean that $\Delta'$ is the spectrum of a discrete valuation ring $R' \supset R$ with field of fractions $K'$, where $K' \supset K$ is a finite separable extension. We use the notation
\begin{align*}
0&:=\Spec k \rightarrow \Delta,\\
\eta&:=\Spec K \rightarrow \Delta,\\
\overline{\eta}&:=\Spec \overline{K} \rightarrow \Delta,
\end{align*}
for the closed point, generic point, and geometric generic point respectively.
Families over $\Delta$ will be denoted in script, while geometric fibers are denoted in regular font. For example, $C_0, \C_{\eta}, C_{\overline{\eta}}$ and $C'_0, \C'_{\eta}, C'_{\overline{\eta}}$  denote the special fiber, generic fiber, and geometric generic fibers of $\C \rightarrow \Delta$ and $\C' \rightarrow \Delta$ respectively. We will often omit the subscript `0' for the special fiber, and simply write $C, C'.$\\

\textbf{Acknowledgements.} This research was conducted under the supervision of Joe Harris, whose encouragement and insight were invaluable throughout. The problem of investigating the birational geometry of $\M_{1,n}$ was suggested by Brendan Hassett, who invited me to Rice University at a critical juncture and patiently explained his beautiful ideas concerning the log-minimal model program for $\M_{g}$. Finally, I am grateful to Dawei Chen, Maksym Fedorchuk, Matthew Simpson, and Fred van der Wyck for numerous helpful and exciting conversations.

\section{Geometry of the elliptic $m$-fold point}
In this section, we work over a fixed algebraically closed field $k$. We consider a curve $C$ with a singular point $p \in C$, and let $\pi: \tilde{C} \rightarrow C$ denote the normalization of $C$ at $p$. $\hat{\O}_{C,p}$ will denote the completion of the local ring of $C$ at $p$, and $m_p \subset \hat{\O}_{C,p}$ the maximal ideal. In addition, we let  $\pi^{-1}(p)=\{p_1, \ldots, p_m\}$ and set
$$
 \hat{\O}_{\tilde{C},\pi^{-1}(p)} := \oplus_{i=1}^{m}\hat{\O}_{\tilde{C},p_i}.
$$
Note that a choice of uniformizers $t_i \in m_{p_i}$ induces an identification
$$ \hat{\O}_{\tilde{C},\pi^{-1}(p)} \simeq k[[t_1]] \oplus \ldots \oplus k[[t_m]].$$

We will be concerned with the following sequence of singularities:

\begin{definition}[The elliptic $m$-fold point] \label{D:mpoint}We say that $p$ is an \emph{elliptic $m$-fold point} of $C$ if
\begin{align*}
\hat{\O}_{C,p} \simeq &
\begin{cases}
k[[x,y]]/(y^2-x^3) & m=1\,\,\,\text{ (ordinary cusp)}\\
k[[x,y]]/(y^2-yx^2) & m=2 \,\,\,\text{ (ordinary tacnode)} \\
k[[x,y]]/(x^2y-xy^2) & m=3 \,\,\, \text{ (planar triple-point)}\\
k[[x_1, \ldots, x_{m-1}]]/I_m & m \geq 4, \text{ (cone over $m$ general points in $\mathbb{A}^{m-2}$),}\\
\end{cases}\\
&\,\,\,\,\, I_{m}:= \left(  x_{h}x_i-x_{h}x_j \, : \,\, i,j,h \in \{1, \ldots, m-1\} \text{ distinct} \right).
\end{align*}
\end{definition}
One checks immediately that, for an appropriate choice of uniformizers, the map $\pi^*:\hat{\O}_{C,p} \hookrightarrow \hat{\O}_{\tilde{C},\pi^{-1}(p)}$ is given by 
\begin{align*}
\left(
\begin{matrix}
x\\
y
\end{matrix}
\right)&\rightarrow
\left(
\begin{matrix}
t_1^2\\
t_1^3\\
\end{matrix}
\right) &m=1\\
\left(
\begin{matrix}
x\\
y\\
\end{matrix}
\right)&\rightarrow
\left(
\begin{matrix} \tag{\dag}
t_1 & t_1^2\\
t_2 & 0\\
\end{matrix}
\right) &m=2\\
\left(
\begin{matrix}
x_1\\
\vdots\\
\vdots\\
x_{m-1}
\end{matrix}
\right)&\rightarrow
\left(
\begin{matrix}
t_1& 0& \hdots  & 0 & t_{m}\\
0&t_2& \ddots & \vdots & t_{m} \\
\vdots& \ddots& \ddots& 0& \vdots  \\
0 & \hdots &0 & t_{m-1} & t_{m}
\end{matrix}
\right) & m \geq 3
\end{align*}
It will also be useful to have the following coordinate-free characterization of the elliptic $m$-fold point. 
\begin{lemma}\label{L:mpoint}
$p \in C$ is an elliptic $m$-fold point $\iff \pi^*:\hat{\O}_{C,p} \hookrightarrow \hat{\O}_{\tilde{C},\pi^{-1}(p)}$ satisfies
\begin{itemize}
\item[(1)] $\pi^{*}(m_{p}/m_{p}^2) \subset \oplus_{i=1}^{m} m_{p_i}/m_{p_i}^2$
is a codimension-one subspace.
\item[(2)] $\pi^{*}(m_{p}/m_{p}^2) \supsetneq m_{p_i}/m_{p_i}^2 \text{ for any $i=1, \ldots, m$}.$
\item[(3)] $\pi^*(m_{p}^2) = \oplus_{i=1}^{m} m_{p_i}^2.$
\end{itemize}
Furthermore, if $m \geq 3$, then (1) and (2) automatically imply (3).
\end{lemma}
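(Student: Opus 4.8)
The plan is to prove the equivalence by showing each direction separately, using the explicit description of $\pi^*$ recorded in $(\dag)$ above the lemma statement. For the forward direction ($\Rightarrow$), I would simply read off conditions (1)--(3) from the matrix presentations of $\pi^*$ in the three cases $m=1$, $m=2$, and $m\geq 3$. In each case $\hat{\O}_{C,p}$ is generated by the given coordinate functions, so $\pi^*(m_p/m_p^2)$ is spanned by the images of those generators modulo $\oplus m_{p_i}^2$; for instance when $m\geq 3$ the images of $x_1,\dots,x_{m-1}$ have linear parts $t_1,\dots,t_{m-1}$ (the $t_m$ terms being absorbed, as each $x_i$ contributes $t_i+t_m$), which span a codimension-one subspace of $\oplus_{i=1}^m m_{p_i}/m_{p_i}^2$ not containing any single $m_{p_i}/m_{p_i}^2$, giving (1) and (2); and one checks $\pi^*(m_p^2)=\oplus m_{p_i}^2$ directly from the relations $I_m$. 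The cusp and tacnode cases are handled the same way by inspection.

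For the converse ($\Leftarrow$), suppose $\pi^*$ satisfies (1)--(3). The strategy is to choose coordinates on $\hat{\O}_{\tilde{C},\pi^{-1}(p)}\simeq \oplus_i k[[t_i]]$ adapted to the subspace $V:=\pi^*(m_p/m_p^2)\subset \oplus_i m_{p_i}/m_{p_i}^2 =: W$. Since $V$ has codimension one and, by (2), does not contain any coordinate line $m_{p_i}/m_{p_i}^2$, a linear functional $\lambda$ on $W$ cutting out $V$ is nonzero on each coordinate line; rescaling the uniformizers $t_i$ we may arrange $\lambda(\bar t_i)=1$ for all $i$, so $V=\{(c_1\bar t_1,\dots,c_m\bar t_m): \sum c_i = 0\}$. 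Lifting a basis of $V$ to elements of $m_p$ and using (3) to control the quadratic and higher terms, I would produce generators of $\hat{\O}_{C,p}$ with the prescribed leading forms and then identify the complete local ring: for $m\geq 3$ the subalgebra of $\oplus_i k[[t_i]]$ generated by elements with linear parts spanning $V$, together with $(3)$ forcing all of $\oplus m_{p_i}^2$ to lie in the image, is exactly $k[[x_1,\dots,x_{m-1}]]/I_m$; for $m=1,2$ one similarly recovers $k[[x,y]]/(y^2-x^3)$ and $k[[x,y]]/(y^2-yx^2)$ after completing the square / adjusting coordinates.

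For the final assertion — that when $m\geq 3$, conditions (1) and (2) automatically imply (3) — the idea is that once the linear parts of the generators of $\pi^*(\hat{\O}_{C,p})$ are pinned down by (1) and (2) (i.e. their leading terms span the codimension-one space $V$ described above, meeting every coordinate line), the products of these generators already generate $\oplus_{i=1}^m m_{p_i}^2$. Concretely, if $f,g\in m_p$ have linear parts $\sum a_i\bar t_i$ and $\sum b_i \bar t_i$ with $\sum a_i = \sum b_i = 0$, then the leading term of $\pi^*(fg)$ is $\sum a_i b_i t_i^2 \in \oplus m_{p_i}^2$; as $f,g$ range over lifts of a basis of $V$, the vectors $(a_ib_i)_{i=1}^m$ span all of $k^m$ precisely when $m\geq 3$ (this is a small linear-algebra fact: the quadratic forms $\sum a_ib_i t_i^2$ coming from the $(m-1)$-dimensional space $\{\sum a_i=0\}$ span the full $m$-dimensional space of diagonal quadratics iff $m\neq 1,2$), which gives the order-two part of $\oplus m_{p_i}^2$; an induction on order, multiplying by further generators, yields all higher-order terms, establishing (3).

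The main obstacle I anticipate is the converse direction, specifically the bookkeeping needed to pass from "the leading linear forms of the generators are prescribed" to "the complete local ring is isomorphic to the stated one" — i.e. showing the higher-order terms can be absorbed by a change of coordinates on $\hat{\O}_{C,p}$ rather than genuinely altering the isomorphism type. This is where condition (3) does essential work in the cases $m=1,2$, and where, for $m\geq 3$, the self-improving structure of the argument (generators' products regenerate $m_{p_i}^2$) must be leveraged carefully; a clean way to organize it is to filter by the valuation on each branch and argue that at each order the ambiguity is killed by the next-order coordinate change, so that the associated graded — hence the ring itself, by completeness — matches $k[[x_1,\dots,x_{m-1}]]/I_m$.
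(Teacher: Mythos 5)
Your proposal follows essentially the same route as the paper: the forward direction is read off from the explicit presentation $(\dag)$, and for the converse you use condition (2) to rescale the uniformizers so that the codimension-one subspace $\pi^{*}(m_{p}/m_{p}^{2})$ is put in standard position (the paper does this by Gaussian elimination on a chosen basis, you by the dual functional $\lambda$), after which condition (3) pins down the complete local ring. Your coordinate-wise-product argument for the final claim that (1) and (2) imply (3) when $m\geq 3$ is sound, and in fact supplies a step (as well as the graded identification of the ring and the $m=1,2$ cases) that the paper's proof leaves to the reader.
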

It is useful to think of the lemma as describing when a function $f$ on $\tilde{C}$ descends to $C$. Part (3) says that if $f$ vanishes to order at least two along $p_1, \ldots, p_m$, then $f$ descends to $C$. Part (1) says that if $f$ vanishes at $p_1, \ldots, p_m$, then the derivatives of $f$ at $p_1, \ldots, p_m$ must satisfy one additional condition in order for $f$ to descend.
\begin{proof}
If $p \in C$ is an elliptic $m$-fold point, then one easily checks (1) -(3) using $(\dag)$. Conversely, if $\pi^*$ satisfies (1)-(3), we will show that it is possible to choose coordinates at $p$ and uniformizers at $p_1, \ldots, p_m$ so that the map $\pi^*$ takes the form (\dag). Start by picking any basis $\{x_1, \ldots, x_{m-1}\}$ for the codimension-one subspace
$$\pi^*(m_{p}/m_{p}^2) \subset \oplus_{i=1}^{m}(t_i)/(t_i^2),$$ and write
\[x_{i}=\oplus_{j=1}^{m}a_{ij}t_j, \,\,a_{ij} \in k, \, t_j \in \m_{p_j}.\]
Reordering the branches if necessary, we can use Gaussian elimination to bring the matrix $\{a_{ij}\}$ into the form
\[
\left(
\begin{matrix}
1& 0& \hdots  & 0 & c_1\\
0&1& \ddots & \vdots & c_2\\
\vdots& \ddots& \ddots& 0& \vdots  \\
0 & \hdots &0 & 1 & c_{m-1}
\end{matrix}
\right)
\]
where $c_1, \ldots, c_{m-1} \in k$. Then (2) implies that $c_1, \ldots, c_{m-1} \in k^*$. Thus, if we change uniformizers by setting $t_i'=c_it_i$ for $i=1, \ldots, m-1$ and $t_m'=t_m$, we see that $\pi^*(m_{p}/m_{p}^2)$ is the span of
\begin{align*}\label{func2}
\{(t_1', 0, 0 \ldots, 0, t_m'), (0, t_2', 0 \ldots, 0, t_m')\ldots, (0, \ldots, 0, t_{m-1}', t_{m}')\}.
\end{align*}
This proves the lemma when $m \geq 3.$ We leave the details of $m=1,2$ to the reader.
\end{proof}
\subsection{The tangent sheaf $\Omega^{\vee}_{C}$}\label{S:tangentsheaf}
The tangent sheaf of $C$ and $\tilde{C}$ are defined as
\begin{align*}
\Omega^{\vee}_C:=&\Hom_{\O_{C}}(\Omega_{C},\O_C),\\
\Omega^{\vee}_{\tilde{C}}:=&\Hom_{\O_{\tilde{C}}}(\Omega_{\tilde{C}},\O_{\tilde{C}}),
\end{align*}
respectively. Let $K(\tilde{C})$ denote the constant sheaf of rational functions on $\tilde{C}$. Then we have a natural inclusion
$$\Omega^{\vee}_{C} \hookrightarrow \pi_*\Omega^{\vee}_{\tilde{C}} \otimes K(\tilde{C}),$$
given by restricting a regular vector field on $C$ to $C \backslash \{p\} \simeq \tilde{C} \backslash \{p_1, \ldots, p_m\}$, and then extending to a \emph{rational} section of $\pi_*\Omega^{\vee}_{\tilde{C}}$. If $p$ is an ordinary node then this inclusion induces an isomorphism
$$
\Omega^{\vee}_{C} \simeq \pi_*\Omega^{\vee}_{\tilde{C}}(-p_1-p_2).
$$
In other words, a regular vector field on $\tilde{C}$ decends to $C$ iff it vanishes at the points lying above the node \cite{DM}.

In Proposition \ref{P:tangentsheaf}, we give a similar description of $\Omega^{\vee}_{C}$ when $p \in C$ is an elliptic $m$-fold point. In this case, $\Omega^{\vee}_{C} \subset \pi_*\Omega^{\vee}_{\tilde{C}}$ is precisely the sheaf of regular vector fields on $\tilde{C}$ which vanish at $p_1, \ldots, p_m$, \emph{and} have the same first-derivative at $p_1, \ldots, p_m$. This allows us to say when a curve with an elliptic $m$-fold point has infintesimal automorphisms, and in particular to conclude that $m$-stable curves have none.

Before stating Proposition \ref{P:tangentsheaf}, we pause to highlight a certain positive characteristic pathology. One might hope that, for an arbitrary isolated curve singularity, the inclusion
$$\Omega^{\vee}_C \hookrightarrow \pi_*\Omega^{\vee}_{\tilde{C}} \otimes K(\tilde{C})$$ 
always factors through $\pi_*\Omega^{\vee}_{\tilde{C}}$.
In other words, regular vector fields on $C$ are always induced by regular vector fields on $\tilde{C}$. This is true in characteristic 0, but may fail in characteristic $p>0$ as the following example shows. (We thank Fred van der Wyck for alerting us to this pitfall.)
\begin{example} \label{E:automorphisms}
Suppose that $\characteristic k=2$ and
$$
C:=\Spec k[x,y]/(y^2-x^3).
$$
Then $\frac{d}{dy}$ is a section of a $\Omega^{\vee}_{C}$ since
$$
\frac{d}{dy} (y^2-x^3)=2y=0.
$$
The map $\pi:\tilde{C} \rightarrow C$ is given by
$$t \rightarrow (t^2,t^3),$$
so
\begin{align*}
\pi^*dy&=3t^2dt,\\
\frac{d}{dy}&=\pi_*\left( \frac{1}{3t^2}\frac{d}{dt} \right).
\end{align*}
In other words, $\frac{d}{dy}$ is a vector field on $C$ whose extension to a rational vector field on $\tilde{C}$ carries a double pole. 

\end{example}
It is this pathology that accounts for the restrictions on $\characteristic k$ that occur in the following proposition.
\begin{proposition}[Tangent sheaf of the elliptic $m$-fold point]\label{P:tangentsheaf}
Suppose that one of the following three conditions holds.
\begin{itemize}
\item[(1)] $p$ is a cusp and $\characteristic k \neq 2,3$,
\item[(2)] $p$ is a tacnode and $\characteristic k \neq 2$,
\item[(3)] $p$ is an elliptic $m$-fold point and $m \geq 3$.
\end{itemize}
Consider the exact sequence
$$
0 \rightarrow \pi_*\Omega^{\vee}_{\tilde{C}}(-\Sigma_i2p_i) \rightarrow \pi_*\Omega^{\vee}_{\tilde{C}}(-\Sigma_i p_i) \rightarrow \oplus_{i=1}^{m}\Omega^{\vee}_{\tilde{C}}(-p_i)|_{p_i} \rightarrow 0.
$$
Since we have a canonical isomorphism
$$\oplus_{i=1}^{m}\Omega^{\vee}_{\tilde{C}}(-p_i)|_{p_i} \simeq \oplus_{i=1}^{m}k,$$
there is a well-defined diagonal map
$$\Delta: k \hookrightarrow \oplus_{i=1}^{m}\Omega^{\vee}_{\tilde{C}}(-p_i)|_{p_i},$$
and $\Omega^{\vee}_{C} \subset \pi_*\Omega^{\vee}_{\tilde{C}}$ is the inverse image of $\Delta \subset  \oplus_{i=1}^{m}\Omega^{\vee}_{\tilde{C}}(-p_i)|_{p_i}$. Equivalently, if we let
\begin{align*}
\oplus_{i=1}^{m}f_i(t_i)\frac{d}{dt_i}&\text{ with } f_i=a_{i0}+a_{i1}t_i+g_i \text{, } g_i \in (t_i)^{2}
\end{align*}
be the local expansion of a section of $\Omega^{\vee}_{\tilde{C}}$ around $p_1, \ldots, p_m$, then $\Omega^{\vee}_{C} \subset \pi_*\Omega^{\vee}_{\tilde{C}}$ is the subsheaf generated by those sections which satisfy
\begin{align*}
&a_{10}=\ldots=a_{m0}=0,\\
&a_{11}=\ldots=a_{m1}. 
\end{align*}
\end{proposition}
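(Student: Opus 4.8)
The plan is to compute $\Omega^\vee_C$ via its completion at $p$, using the explicit embedding $\pi^*:\hat{\O}_{C,p}\hookrightarrow \hat{\O}_{\tilde C,\pi^{-1}(p)}$ recorded in $(\dag)$, and then to match the answer against the claimed subsheaf of $\pi_*\Omega^\vee_{\tilde C}$. Since $\Omega^\vee_C$ is a coherent sheaf agreeing with $\pi_*\Omega^\vee_{\tilde C}$ away from $p$ (because $\pi$ is an isomorphism there), it suffices to identify $\widehat{\Omega^\vee_{C,p}}$ inside $\bigoplus_i k[[t_i]]\tfrac{d}{dt_i}$; equivalently, to identify which $k$-derivations $D:\hat{\O}_{\tilde C,\pi^{-1}(p)}\to \hat{\O}_{\tilde C,\pi^{-1}(p)}$ carry the subring $\pi^*(\hat{\O}_{C,p})$ into itself. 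Write such a $D=\bigoplus_i f_i(t_i)\tfrac{d}{dt_i}$ with $f_i=a_{i0}+a_{i1}t_i+g_i$, $g_i\in(t_i)^2$. The content of the proposition is then the two scalar conditions $a_{10}=\cdots=a_{m0}=0$ and $a_{11}=\cdots=a_{m1}$.

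First I would dispose of the case $m\ge 3$, which is cleanest. By Lemma~\ref{L:mpoint}, $\pi^*(\hat{\O}_{C,p})$ is the subring generated (topologically) by $1$ together with the codimension-one subspace $V:=\pi^*(m_p/m_p^2)\subset\bigoplus_i(t_i)/(t_i)^2$ and the full $\bigoplus_i m_{p_i}^2$; concretely $V$ is spanned by $u_j:=t_j+t_m$ for $j=1,\dots,m-1$ (after the normalization of $(\dag)$). A derivation $D$ preserves $\pi^*(\hat{\O}_{C,p})$ iff (i) $D$ preserves $\bigoplus_i m_{p_i}^2=\pi^*(m_p^2)$ — automatic, since any $D$ as above with $a_{i0}=0$ sends $(t_i)^2$ into $(t_i)^2$ and one must then force $a_{i0}=0$ because $D(t_i^2)=2a_{i0}t_i+\cdots$ would otherwise have a degree-one term outside $\pi^*(\hat{\O}_{C,p})$ (here is where $\characteristic k\ne 2$ enters for small $m$, and where $m\ge3$ makes it free); and (ii) $D(u_j)=f_j(t_j)+f_m(t_m)\bmod m_p^2$ lies in $V$ modulo $\pi^*(m_p^2)$, i.e. its degree-one part $a_{j1}t_j+a_{m1}t_m$ is a multiple of $u_j=t_j+t_m$, forcing $a_{j1}=a_{m1}$ for all $j$. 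This gives both conditions, and conversely any $D$ satisfying them visibly preserves the generators, hence the subring; coherence of $\Ext$ or a direct check shows this subsheaf is exactly $\Omega^\vee_C$. The reformulation in terms of the exact sequence and the diagonal map $\Delta$ is then a restatement: $a_{i0}=0$ for all $i$ says the section lies in $\pi_*\Omega^\vee_{\tilde C}(-\Sigma_i p_i)$, and the common value $a_{i1}$ is precisely the image in $\bigoplus_i\Omega^\vee_{\tilde C}(-p_i)|_{p_i}\simeq\bigoplus_i k$, so the condition $a_{11}=\cdots=a_{m1}$ says this image lies on the diagonal.

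For $m=1,2$ the same strategy applies but one must argue by hand with the presentations $k[[x,y]]/(y^2-x^3)$ and $k[[x,y]]/(y^2-yx^2)$, asking which $D$ with $\pi^*x=t^2,\pi^*y=t^3$ (resp. $\pi^*x=(t_1,t_2),\pi^*y=(t_1^2,0)$) satisfy $D(\pi^*x),D(\pi^*y)\in\pi^*(\hat{\O}_{C,p})$. Requiring $D(t^2)=2a_0t+\cdots$ and $D(t^3)=3a_0t^2+\cdots$ to lie in $k[[t^2,t^3]]$ forces $2a_0=0$ and $3a_1=$ (coefficient that must vanish), which is exactly why we need $\characteristic k\ne2,3$ for the cusp and $\ne2$ for the tacnode; the vanishing $a_0=0$ then gives the first block of conditions, and the surviving first-derivative matching gives the second (trivial for $m=1$, one equation for $m=2$). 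This is the routine-calculation part and I would relegate it to the reader, exactly as the paper does for the analogous point in Lemma~\ref{L:mpoint}.

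The main obstacle is not the algebra but bookkeeping the exceptional small characteristics correctly: one must pinpoint where factors of $2$ and $3$ are inverted (in the relations $D(y^2-x^3)$, $D(y^2-yx^2)$) to justify excluding $\characteristic k=2,3$ in the cusp case and $\characteristic k=2$ in the tacnode case — Example~\ref{E:automorphisms} shows these exclusions are genuinely necessary — while checking that no such factors intervene when $m\ge3$, so that the statement holds in all characteristics there. A secondary point requiring a little care is the passage from the completed local computation to the global sheaf statement: since $\Omega^\vee_C$ and the proposed subsheaf of $\pi_*\Omega^\vee_{\tilde C}$ agree on $\tilde C\setminus\pi^{-1}(p)$ and both are subsheaves of the torsion-free sheaf $\pi_*\Omega^\vee_{\tilde C}$, it is enough to check equality of stalks at $p$, which is what the completed computation supplies.
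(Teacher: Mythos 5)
The local computation you propose---among \emph{regular} vector fields $\oplus_i f_i(t_i)\frac{d}{dt_i}$ on $\tilde C$, the ones preserving $\pi^*\hat{\O}_{C,p}$ are exactly those with $a_{10}=\cdots=a_{m0}=0$ and $a_{11}=\cdots=a_{m1}$---is essentially the computation in the paper. But your reduction step has a genuine gap: by identifying $\widehat{\Omega^\vee_{C,p}}$ with the $k$-derivations of $\hat{\O}_{\tilde C,\pi^{-1}(p)}$ that carry $\pi^*(\hat{\O}_{C,p})$ into itself, you assume at the outset that every local section of $\Omega^\vee_C$ is induced by a regular vector field on $\tilde C$. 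A priori the inclusion is only $\Omega^\vee_C\hookrightarrow \pi_*\Omega^\vee_{\tilde C}\otimes K(\tilde C)$, and the containment $\Omega^\vee_C\subset\pi_*\Omega^\vee_{\tilde C}$ is itself part of the proposition; it is precisely what can fail in small characteristic, as Example \ref{E:automorphisms} shows (for the cusp in characteristic $2$, $\frac{d}{dy}$ pulls back to $\frac{1}{3t^2}\frac{d}{dt}$). Your closing remark that ``both are subsheaves of $\pi_*\Omega^\vee_{\tilde C}$'' begs exactly this question. The paper avoids it by working with rational fields throughout: it pushes forward $\oplus_i f_i\frac{d}{dt_i}$ with $f_i\in k((t_i))$, requires the coefficient of each $\frac{d}{dx_j}$ to lie in $\hat{\O}_{C,p}$, and observes that these conditions force regularity as well as the stated vanishing.

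The gap is not cosmetic, because your framework cannot locate the hypothesis $\characteristic k\neq 3$ at the cusp. Among regular fields, $D(\pi^*x)=2tf$ forces $a_0=0$ whenever $\characteristic k\neq 2$, while $D(\pi^*y)=3t^2f$ lies in $k[[t^2,t^3]]$ automatically; so your computation goes through verbatim in characteristic $3$ (and your parenthetical ``forces $2a_0=0$ and $3a_1=\dots$'' is off---the proposition imposes no condition on $a_1$ when $m=1$). What actually fails in characteristic $3$ is the containment: the rational field $t^{-1}\frac{d}{dt}$ pairs with $x=t^2$ to give $2\in\hat{\O}_{C,p}$ and with $y=t^3$ to give $3t=0$, hence descends to a regular vector field on $C$ not lying in $\pi_*\Omega^\vee_{\tilde C}$. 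The repair is exactly the paper's formulation: allow $f_i\in k((t_i))$ from the start and check that the descent conditions exclude poles---automatic for the tacnode and for $m\geq 3$, since there $D(\pi^*x)$, resp.\ $D(\pi^*x_i)$, equals $f_1\oplus f_2$, resp.\ $f_i\oplus f_m$, and elements of $\hat{\O}_{C,p}$ have no poles, but requiring $\characteristic k\neq 3$ for the cusp. (A smaller point: for $m\geq 3$ your mechanism for $a_{i0}=0$ via $D(t_i^2)=2a_{i0}t_i+\cdots$ needs $\characteristic k\neq 2$, which the proposition does not assume; the characteristic-free argument is that $D(\pi^*x_i)=f_i\oplus f_m$ vanishes on some third branch, so its constant terms must be zero.)
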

\begin{proof}
A section of $\Omega^{\vee}_{\tilde{C}} \otimes K(\tilde{C})$ is contained in $\Omega^{\vee}_{C}$ iff its image under the push-forward map
$$\pi_*: \pi_*\Hom(\Omega_{\tilde{C}},K(\tilde{C})) \rightarrow \Hom(\Omega_{C},\pi_*K(\tilde{C})),$$
lies in the subspace
$$
\Hom(\Omega_{C},\O_{C}) \subset \Hom(\Omega_{C},\pi_*K(\tilde{C})).
$$
Thus, we must write out the push-forward map in local coordinates. We may work formally around $p$ and use the coordinates introduced in $(\dag)$. \\
\begin{itemize}
\item[(1)](The cusp) The section $f(t_1)\frac{d}{dt_1} \in \pi_*\Omega^{\vee}_{\tilde{C}} \otimes K(\tilde{C})$ pushes forward to
$$
\pi_*\left(f(t_1)\frac{d}{dt_1}\right)=2t_1f(t_1) \frac{d}{dx}+3t_1^2f(t_1)\frac{d}{dy}.
$$
Since $\hat{\O}_{C,p}=k[[t_1^2,t_1^3]] \subset k[[t_1]]$, we see that if $\characteristic k \neq 2,3,$ then
$$2t_1f(t_1), 3t^2f(t_1) \in \hat{\O}_{C,p} \iff f(t_1) \in (t_1).$$
Thus, 
$$\Omega^{\vee}_{C}=\pi_*\Omega^{\vee}_{\tilde{C}}(-p_1).$$
\\
\item[(2)](The tacnode)
The section $f_1(t_1)\frac{d}{dt_1} \oplus f_2(t_2)\frac{d}{dt_2} \in \pi_*\Omega^{\vee}_{\tilde{C}} \otimes K(\tilde{C})$
pushes forward to
$$
\pi_*\left(f_1(t_1)\frac{d}{dt_1}\oplus f_2(t_2)\frac{d}{dt_2} \right)=(f_1(t_1)\oplus f_2(t_2)) \frac{d}{dx}+(2t_1f_1(t_1)\oplus 0)\frac{d}{dy}.
$$
If $\characteristic k \neq 2$, then
$$
(2t_1f_1(t_1)\oplus 0) \in \hat{\O}_{C,p} \iff f_1(t_1) \in (t_1).
$$
Furthermore, once we know $f_1(t_1) \in (t_1)$, then
$$
(f_1(t_1)\oplus f_2(t_2)) \in \hat{\O}_{C,p} \iff f_1(t_1)\oplus f_2(t_2)=a(t_1\oplus t_2)+(g_1 \oplus g_2)
$$
 for some $a \in k$ and $(g_1\oplus g_2) \in (t_1^2) \oplus (t_2^2)$, which is precisely the conclusion of the proposition.\\
\item[(3)]($m \geq 3$)
The section $\oplus_{i=1}^{m}f_i(t_i)\frac{d}{dt_i} \in \pi_*\Omega^{\vee}_{\tilde{C}} \otimes K(\tilde{C})$
pushes forward to
$$
\pi_*\left( \oplus_{i=1}^{m}f_i(t_i)\frac{d}{dt_i} \right)=\sum_{i=1}^{m-1}(f_i(t_i) \oplus f_m(t_m)) \frac{d}{dx_i}
$$
Note that the function $(f_i(t_i) \oplus f_m(t_m))$ vanishes identically on all branches except the $i^{th}$ and $m^{th}$. It follows that, for each $i=1, \ldots, m-1$,
$$
(f_i(t_i) \oplus f_m(t_m)) \in \hat{\O}_{C,p} \iff (f_i(t_i) \oplus f_m(t_m))=a(t_i \oplus t_m) + (g_1 \oplus g_2),
$$
for some $a \in k$ and $(g_i \oplus g_m) \in (t_i^2) \oplus (t_m^2)$. Thus,
$$
\oplus_{i=1}^{m}f_i(t_i)=a(t_1 \oplus \ldots \oplus t_m)+(g_1(t_1) \oplus \ldots \oplus g_m(t_m)),
$$
for some $a \in k$ and $g_i \in (t_i^2)$. This completes the proof of the proposition.
\end{itemize}
\end{proof}
Our only use for Proposition \ref{P:tangentsheaf} is the following corollary, which translates the condition of having no infinitesimal automorphisms into a condition on distinguished points.
\begin{corollary}\label{C:automorphisms} Suppose $\characteristic k \neq 2,3$. Let $C$ be a complete $n$-pointed curve $(C, q_1, \ldots, q_n)$, and let $\Sigma$ denote the support of the divisor $\sum_{i}q_i$. Suppose $C$ has an elliptic $m$-fold point $p \in C$, and that the normalization of $C$ at $p$ consists of $m$ distinct connected components:
$$
\tilde{C}=\tilde{C}_1 \cup \ldots \cup \tilde{C}_m,
$$
where each $\tilde{C}_i$ is a nodal curve of arithmetic genus zero. Then we have:
$$
H^0(C,\Omega^{\vee}_C(-\Sigma))=0 \iff \text{conditions (1), (2), and (3) hold.}
$$
\begin{itemize}
\item[(1)] $\tilde{B}_1, \ldots, \tilde{B}_m$ each have $\geq 2$ distinguished points, where $\tilde{B}_i \subset \tilde{C}_i$ is the unique irreducible component of $\tilde{C}_i$ lying above $p$,
\item[(2)] At least one of $\tilde{B}_1, \ldots, \tilde{B}_m$ has $\geq 3$ distinguished points,
\item[(3)] Every other component of $\tilde{C}$ has $\geq 3$ distinguished points.
\end{itemize}
\end{corollary}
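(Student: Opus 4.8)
The plan is to reduce the computation of $H^0(C,\Omega^\vee_C(-\Sigma))$ to a computation of global sections of a sheaf on the normalization $\tilde C$, using the local description of $\Omega^\vee_C$ near the elliptic $m$-fold point from Proposition \ref{P:tangentsheaf}. First I would note that away from $p$ the sheaf $\Omega^\vee_C(-\Sigma)$ agrees with $\pi_*\Omega^\vee_{\tilde C}(-\tilde\Sigma)$, where $\tilde\Sigma$ is the preimage of $\Sigma$ together with the nodes of $C$ lying over itself; at $p$, Proposition \ref{P:tangentsheaf} says that $\Omega^\vee_C \subset \pi_*\Omega^\vee_{\tilde C}$ is cut out by the conditions that a local vector field $\oplus f_i(t_i)\tfrac{d}{dt_i}$ vanish at each $p_i$ and have a common first derivative $a_{11}=\cdots=a_{m1}$ there. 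Thus a global section of $\Omega^\vee_C(-\Sigma)$ is exactly a collection of regular vector fields $v_i$ on the connected components $\tilde C_i$, vanishing along all marked points and along all nodes, vanishing along $p_1,\dots,p_m$ (so in particular each $v_i$ vanishes at the point of $\tilde B_i$ over $p$), and such that the $m$ first-derivatives at these points agree.

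Next I would carry out the standard dévissage on each $\tilde C_i$, which is a tree of $\mathbb{P}^1$'s of arithmetic genus zero. Since $\Omega^\vee_{\mathbb{P}^1}\simeq \O(2)$, a vector field on an irreducible component vanishing at $\geq 3$ distinguished points is zero, vanishing at exactly $2$ distinguished points forms a $1$-dimensional space (scaling the "axis" between the two points), and vanishing at $\leq 1$ distinguished point leaves a $\geq 2$-dimensional space. Propagating across the nodes of $\tilde C_i$ (a vector field regular on the whole tree must vanish at every node, so one kills components leaf-by-leaf), one shows: a regular vector field on $\tilde C_i$ vanishing at all distinguished points other than the point over $p$ is zero as soon as every component of $\tilde C_i$ other than $\tilde B_i$ has $\geq 3$ distinguished points and $\tilde B_i$ has $\geq 2$; and if moreover $\tilde B_i$ has $\geq 3$ distinguished points then even a vector field merely required to vanish along $p_i$ and the nodes is zero. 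Conditions (1) and (3) are precisely what is needed to guarantee that, once (1) and (3) hold, the only global sections surviving are, on each $\tilde C_i$, multiples of the standard vector field $\lambda_i t_i\tfrac{d}{dt_i}+O(t_i^2)$ on $\tilde B_i$ (extended by zero on the rest of $\tilde C_i$) in case $\tilde B_i$ has exactly $2$ distinguished points, and $0$ in case $\tilde B_i$ has $\geq 3$. The glueing condition $\lambda_1=\cdots=\lambda_m$ from Proposition \ref{P:tangentsheaf} then forces all $\lambda_i$ equal; but if some $\tilde B_j$ has $\geq 3$ distinguished points (condition (2)) then $\lambda_j=0$, hence all $\lambda_i=0$, so $H^0=0$. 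Conversely, if any of (1), (2), (3) fails one exhibits a nonzero section: if (3) fails, a nonzero vector field supported on the offending component; if (1) fails, a nonzero vector field on the $\tilde C_i$ whose $\tilde B_i$ has $\leq 1$ distinguished point (with $\lambda_i$ chosen $0$ and the others $0$ — so the glueing is trivially satisfied); and if (1), (3) hold but (2) fails, i.e. every $\tilde B_i$ has exactly $2$ distinguished points, then taking $\lambda_1=\cdots=\lambda_m=1$ gives a nonzero section.

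I should be slightly careful about two bookkeeping points. First, "distinguished point" includes the point of $\tilde B_i$ lying over $p$, so in the dévissage I must consistently distinguish the role of that point from the genuine marked points and nodes — it is a point at which the $v_i$ must vanish but whose first-derivative datum is the one that enters the glueing. Second, I must make sure the sheaf-theoretic statement "$\Omega^\vee_C(-\Sigma)$ is the inverse image of the diagonal" interacts correctly with twisting down by $\Sigma$; since $\Sigma$ consists of smooth points disjoint from $p$, twisting is harmless and the local picture at $p$ is unchanged. The hypothesis $\characteristic k\neq 2,3$ is exactly what is needed to invoke Proposition \ref{P:tangentsheaf} in all three cases (cusp, tacnode, $m\geq 3$); without it the pathology of Example \ref{E:automorphisms} intervenes and vector fields on $C$ need not come from $\tilde C$ at all.

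The main obstacle I anticipate is not conceptual but organizational: carrying out the leaf-by-leaf dévissage on the trees $\tilde C_i$ cleanly enough that the three conditions emerge in exactly the stated form, and handling the converse (producing explicit nonzero sections when a condition fails) without case-explosion. In particular the case analysis for condition (2)—distinguishing "$\tilde B_i$ has exactly $2$" from "$\geq 3$" and tracking how the scalars $\lambda_i$ are constrained—is where the argument is most delicate, since it is precisely the interaction between the local glueing condition of Proposition \ref{P:tangentsheaf} and the global geometry of the $\tilde C_i$ that makes the "at least one component has $\geq 3$ points" condition (rather than "all have $\geq 3$") the right one.
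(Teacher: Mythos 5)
Your proposal is correct and follows essentially the same route as the paper: both use Proposition \ref{P:tangentsheaf} to identify sections of $\Omega^{\vee}_{C}(-\Sigma)$ with vector fields on the components of $\tilde{C}$ vanishing at distinguished points and having matching first derivatives at the points over $p$, then analyze the trees $\tilde{C}_i$ componentwise and exhibit explicit nonzero sections when any of (1)--(3) fails. The paper merely packages your dévissage as the vanishing statements $H^0(\tilde{C}_i,\Omega^{\vee}_{\tilde{C}_i}(-2p_i-\Sigma|_{\tilde{C}_i}))=0$ and, for some $i$, $H^0(\tilde{C}_i,\Omega^{\vee}_{\tilde{C}_i}(-p_i-\Sigma|_{\tilde{C}_i}))=0$, so the difference is purely organizational.
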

\begin{proof}
First, let us check that these conditions are necessary. For (1), suppose that $\tilde{B}_i$ has only one distinguished point. Then this distinguished point is necessarily $p_i$, the point lying above $p$, so $\tilde{B}_i=\tilde{C}_i$, and $\tilde{C}_i$ has a non-zero vector field which vanishes to order two at $p_i$. One may extend this section (by zero) to a section of $\Omega^{\vee}_{\tilde{C}}(-\Sigma)$, and Proposition \ref{P:tangentsheaf} implies that it decends to give a non-zero section of $\Omega^{\vee}_{C}(-\Sigma)$.

For (2), suppose that each $\tilde{B}_i$ has exactly two distinguished points, say $p_i$ and $r_i$. Then the restriction map
$$
\oplus_{i=1}^{m}\Omega^{\vee}_{\tilde{B}_i}(-p_i-r_i) \rightarrow \oplus_{i=1}^{m}\Omega^{\vee}_{\tilde{B}_i}(-p_i-r_i)|_{p_i} \rightarrow 0
$$
is surjective on global sections. Thus we can find sections of $\Omega^{\vee}_{\tilde{B}_i}$ which vanish at $p_i$ and $r_i$, and whose first derivatives at $p_1, \ldots, p_m$ agree. We can extend these (by zero) to a section of $\Omega^{\vee}_{\tilde{C}}(-\Sigma)$, and Proposition \ref{P:tangentsheaf} implies that this descends to give a non-vanishing section of $\Omega^{\vee}_{C}(-\Sigma)$.

Finally, if any other component of $\tilde{C}$ has less than three distinguished points, then there exists a vector field on that component which vanishes at all distinguished points. Since this component necessarily meets the rest of $\tilde{C}$ nodally, such a section can be extended (by zero) to a section of $\Omega^{\vee}_{\tilde{C}}(-\Sigma)$ which descends to $\Omega^{\vee}_{C}(-\Sigma)$.

Now let us check that conditions $(1)$, $(2)$, and $(3)$ are sufficient. One easily checks that conditions (1) and (3) imply
$$H^0(\tilde{C}_1,\Omega^{\vee}_{\tilde{C}_1}(-2p_2-\Sigma|_{\tilde{C}_1}))=\ldots =H^0(\tilde{C}_m, \Omega^{\vee}_{\tilde{C}_m}(-2p_m-\Sigma|_{\tilde{C}_m}))=0,$$ while conditions (2) and (3) imply that, for some $i$, we have
$$
H^0(\tilde{C}_i,\Omega^{\vee}_{\tilde{C}_i}(-p_i-\Sigma|_{\tilde{C}_i}))=0.
$$
This latter condition says that any section of $\Omega^{\vee}_{\tilde{C}_i}$ which vanishes at $p_i$ must vanish identically. It follows, by Proposition \ref{P:tangentsheaf}, that any section of $\Omega^{\vee}_{\tilde{C}}(-\Sigma)$ which decends to a section of $\Omega^{\vee}_{C}(-\Sigma)$ must vanish at $p_1, \ldots, p_m$ \emph{and} have vanishing first-derivative at $p_1, \ldots, p_m$. But since
$$H^0(\tilde{C}_1,\Omega^{\vee}_{\tilde{C}_1}(-2p_2-\Sigma|_{\tilde{C}_1}))=\ldots =H^0(\tilde{C}_m, \Omega^{\vee}_{\tilde{C}_m}(-2p_m-\Sigma|_{\tilde{C}_m}))=0,$$
any section of $\Omega^{\vee}_{\tilde{C}}$ satisfying these conditions is identically zero.

\end{proof}

\subsection{The dualizing sheaf $\omega_{C}$}\label{S:dualizingsheaf}
In the following proposition, we describe the dualizing sheaf $\omega_{C}$ locally around an elliptic $m$-fold point. If $p \in C$ is a singular point on a reduced curve, then, locally around $p$, $\omega_{C}$ admits the following explicit description: Let $\pi: \tilde{C} \rightarrow C$ be the normalization of $C$ at $p$ and consider the sheaf $\Omega_{\tilde{C}} \otimes K(\tilde{C})$ of rational differentials on $\tilde{C}$. Let $K_{\tilde{C}}(\Delta) \subset \Omega_{\tilde{C}} \otimes K(\tilde{C})$ be the subsheaf of rational differentials $\omega$ satisfying the following condition: For every function $f \in \O_{C,p}$
$$
\sum_{p_i \in \pi^{-1}(p)}\Res_{p_i}((\pi^*f) \,\omega)=0.
$$
Then, locally around $p$, we have $\omega_{C}=\pi_*K_{C}(\Delta).$ (See \cite{Serre} for a general discussion of duality on curves.) Using this description, we can show that
\begin{proposition}\label{P:dualizingsheaf} If $p \in C$ is an elliptic $m$-fold point, then
\begin{itemize}
\item[(1)] $\omega_{C}$ is invertible near $p$, i.e. the elliptic $m$-fold point is \emph{Gorenstein}.
\item[(2)] $\pi^*\omega_{C}=\omega_{\tilde{C}}(2p_1+ \ldots + 2p_m).$
\end{itemize}
\end{proposition}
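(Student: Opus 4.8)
The plan is to use the residue description of the dualizing sheaf recalled just above: near $p$ one has $\omega_C=\pi_*K_C(\Delta)$, where $K_C(\Delta)\subset\Omega_{\tilde{C}}\otimes K(\tilde{C})$ consists of the rational differentials $\omega=\bigoplus_{i=1}^{m}h_i(t_i)\,dt_i$ on $\tilde{C}$ for which $\sum_i\Res_{p_i}\big((\pi^*f)\,\omega\big)=0$ for every $f\in\hat{\O}_{C,p}$. I would compute $K_C(\Delta)$ explicitly in the normal form $(\dag)$ and identify it as a free rank-one $\hat{\O}_{C,p}$-module admitting a generator all of whose local components have a pole of order exactly two: the freeness is assertion (1), and the shape of the generator yields (2).

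First I would record, from $(\dag)$ (equivalently Lemma \ref{L:mpoint}), that $\hat{\O}_{C,p}$ is the subring of $\bigoplus_i k[[t_i]]$ of tuples $(f_i)$ with $f_1(0)=\dots=f_m(0)$ and $\sum_{i<m}f_i'(0)=f_m'(0)$ — this reads $f_1'(0)=0$ when $m=1$ (the cusp), and is the tacnode relation when $m=2$. Running $f$ through a spanning set then pins down $K_C(\Delta)$: testing against functions supported on a single branch and vanishing there to order $\geq 2$ (all of which lie in $\hat{\O}_{C,p}$, using $\pi^*m_p^2=\bigoplus_i m_{p_i}^2$ when $m\geq 3$) forces each $h_i$ to have a pole of order $\leq 2$ at $p_i$; testing against $f=1$ forces $\sum_i\Res_{p_i}(h_i\,dt_i)=0$; and testing against the generators of $m_p$ — which by $(\dag)$, for $m\geq 2$, each restrict to $t_i$ on one branch and $t_m$ on the $m$-th — forces $[t_1^{-2}]h_1=\dots=[t_{m-1}^{-2}]h_{m-1}=-[t_m^{-2}]h_m$ (there being no such relation in the cusp case). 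One checks there are no further relations: a product such as $\pi^*(x_ix_j)$ vanishes on every branch but the $m$-th, where it has order $\geq 2$, so it contributes nothing new.

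With $K_C(\Delta)$ so described, I would take $\eta:=t_1^{-2}dt_1\oplus\dots\oplus t_{m-1}^{-2}dt_{m-1}\oplus(-t_m^{-2}dt_m)$ (just $t^{-2}dt$ when $m=1$), check $\eta\in K_C(\Delta)$, and verify $K_C(\Delta)=\hat{\O}_{C,p}\cdot\eta$: for $f=(f_i)\in\hat{\O}_{C,p}$ the two defining relations of $\hat{\O}_{C,p}$ translate exactly into the pole-coefficient and residue conditions cutting out $K_C(\Delta)$, while conversely multiplying the components of a section of $K_C(\Delta)$ by $\pm t_i^{2}$ returns a tuple in $\hat{\O}_{C,p}$. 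Thus $\omega_C=\pi_*K_C(\Delta)$ is invertible near $p$, which is (1). For (2), the inclusion $\omega_C\hookrightarrow\pi_*(\Omega_{\tilde{C}}\otimes K(\tilde{C}))$ induces $\pi^*\omega_C\to\Omega_{\tilde{C}}\otimes K(\tilde{C})$, sending the local generator $\eta$ to $\pm t_i^{-2}dt_i$ near $p_i$; this map is nonzero on the line bundle $\pi^*\omega_C$, hence injective, it is an isomorphism away from $\pi^{-1}(p)$, and near $p_i$ its image is $\O_{\tilde{C},p_i}\cdot t_i^{-2}dt_i$, i.e.\ the stalk of $\omega_{\tilde{C}}(2p_i)$. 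Hence $\pi^*\omega_C=\omega_{\tilde{C}}(2p_1+\dots+2p_m)$.

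The main obstacle is the bookkeeping in the second step: one must be certain that the short list of evident test functions — $1$, the generators of $m_p$, and $m_p^2$ — really exhausts $K_C(\Delta)$, and keep the $\hat{\O}_{C,p}$-module identifications straight through the asymmetry of $(\dag)$; once $\hat{\O}_{C,p}$ is written out as above, the rest is elementary Laurent-series arithmetic, which is why the $m=1,2$ variants can safely be left to the reader. As an independent check, invertibility also follows from the length identity $\ell(\O_{\tilde{C}}/\mathfrak{c})=2m=2\delta$ for the conductor $\mathfrak{c}=\bigoplus_i m_{p_i}^2$, and that route gives (2) too via $\pi^*\omega_C=\mathfrak{c}^{-1}\cdot\omega_{\tilde{C}}$.
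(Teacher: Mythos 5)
Your proposal is correct and takes essentially the same route as the paper's own proof: the residue description of $\omega_{C}$ near $p$, bounding the pole order by testing against functions vanishing to order two on the branches, extracting the remaining conditions from scalars and a basis of $m_{p}/m_{p}^{2}$, and then exhibiting the explicit generator $\frac{dt_1}{t_1^2}+\cdots+\frac{dt_{m-1}}{t_{m-1}^2}-\frac{dt_m}{t_m^2}$, whose double poles give (2). Your sign bookkeeping $[t_i^{-2}]h_i=-[t_m^{-2}]h_m$ is the consistent normalization for this generator, and the concluding conductor remark is a fine independent check but not needed.
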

\begin{proof} We will prove the proposition when $m \geq 3$ and leave the details of $m=1,2$ to the reader. By the previous discussion, sections of $\omega_{C}$ near $p$ are given by rational sections $\omega \in \omega_{\tilde{C}} \otimes K(\tilde{C})$ satisfying
\begin{align*}
 \sum_{i=1}^{m}\Res_{p_i}((\pi^*f)\,\omega)=0 \text{ for all $f \in \O_{C,p}$}.
\end{align*}
By Lemma \ref{L:mpoint} (3), every function vanishing to order $\geq 2$ along $p_1, \ldots, p_m$ descends to $C$, so any differential $\omega$ which satisfies this condition can have at most double poles along $p_1, \ldots, p_m$. Now consider the polar part of $\omega$ around $p_1, \ldots, p_m$, i.e. write
$$
\omega-\omega'=\left(a_1\frac{dt_1}{t_1^2}+b_1\frac{dt_1}{t_1} \right) \oplus  \ldots \oplus \left( a_m\frac{dt_m}{t_m^2} +b_m\frac{dt_m}{t_m} \right)
$$
with $a_i, b_i \in k$ and $\omega' \in \omega_{\tilde{C}}$. It suffices to check the residue condition for scalars and a basis of $m_{p}/m_{p}^2$. Taking $f \in \O_{C}$ to be non-zero scalar, the residue condition gives
$$
b_1+\ldots+b_m=0.
$$
Working in the coordinates $(\dag)$ introduced at the beginning of the section, we see that
$$
\{(t_1, 0, \ldots, 0, t_m), (0,t_2,0, \ldots, 0, t_m), \ldots, (0, \ldots, 0, t_{m-1},t_{m})\}
$$
gives a basis for $m_{p}/m_{p}^2$, so the residue condition forces
\begin{align*}
&&a_i-a_m=0, \text{ for $i=1, \ldots, m-1$.}
\end{align*}

From this, one checks immediately that
$$
\left\{ \left(\frac{dt_1}{t_1^2}+ \ldots + \frac{dt_{m-1}}{t_{m-1}^2} -\frac{dt_m}{t_m^2}\right), \left(\frac{dt_{1}}{t_1} -\frac{dt_m}{t_m}\right),\left(\frac{dt_{2}}{t_2} -\frac{dt_m}{t_m}\right), \ldots, \left( \frac{dt_{t_{m-1}}}{t_{m-1}}  -\frac{dt_m}{t_m}\right) \right\}
$$
gives a basis of sections for
$\omega_{C}/\pi_*\omega_{\tilde{C}}$. It follows that multiplication by $$\frac{dt_1}{t_1^2}+ \ldots + \frac{dt_{m-1}}{t_{m-1}^2}-\frac{dt_m}{t_m^2}$$ gives an isomorphism
$$
\O_{C} \simeq \omega_{C},
$$
so $\omega_{C}$ is invertible. Since a local generator for $\omega_{C}$ pulls back to a differential with a double pole along each of $p_1, \ldots, p_m$,  we also have $$\pi^*\omega_{C}=\omega_{\tilde{C}}(2p_1+ \ldots +2p_m).$$
\end{proof}
\subsection{Semistable limits}\label{S:stablelimits}
Our aim in this section is to classify those `tails' that occur when performing semistable reduction to a one-parameter smoothing of the elliptic $m$-fold point. This will be the key ingredient in the verification of the valuative criterion for $\SM_{1,\A}(m)$. Throughout the section, $C$ denotes a connected curve (not necessarily complete), and for simplicity we will assume that $C$ has a unique singular point $p$. 

\begin{definition}
A \emph{smoothing of $(C,p)$} consists of a morphism $\pi:\C \rightarrow \Delta$, where $\Delta$ is the spectrum of a discrete valuation ring with residue field $k$, and a distinguished closed point $p \in \C$ satisfying
\begin{itemize}
\item[(1)] $\pi$ is quasiprojective and flat of relative dimension 1.
\item[(2)] $\pi$ is smooth on $U:=\C \backslash p$.
\item[(3)] The special fiber of $\pi$ is isomorphic to $(C,p).$
\end{itemize}
\end{definition}
\begin{definition}
If $\C/\Delta$ is a smoothing of $(C,p)$, a \emph{semistable limit} of $\C / \Delta$ consists of a finite base-change $\Delta' \rightarrow \Delta$, and a diagram
\[
\xymatrix{
\C^{s} \ar[rr]^{\phi}\ar[rd]_{\pi^{s}}&&\C \times_{\Delta} \Delta'\ar[ld]\\
&\Delta'&
}
\]
satisfying
\begin{itemize}
 \item[(1)] $\pi^s$ is quasiprojective and flat of relative dimension 1.
 \item[(2)] $\phi$ is proper, birational, and $\phi(\Exc(\phi))=p$.
 \item[(3)] The total space $\C^s$ is regular, and the special fiber $C^s$ is nodal.
\item[(4)] $\Exc(\phi)$ contains no (-1)-curves.
\end{itemize}
The \emph{exceptional curve} of the semistable limit is the pair $(E,\Sigma)$ where
\begin{align*}
E&:=\phi^{-1}(p),\\
\Sigma&:=\{ E \cap \overline{C^s \backslash E} \}.
\end{align*}
We think of $\Sigma$ as a reduced effective Weil divisor on $E$. Note that $(E, \Sigma)$ is necessarily  semistable, i.e. nodal and each rational component of $\tilde{E}$ has two distinguished points.
\end{definition}
\begin{remarks}
\begin{itemize}
\item[]
\item[(1)] If $\C/\Delta$ is any smoothing of $p$, then the total space of $\C$ is normal by Serre's criterion. In particular, $\Exc(\phi)$ is connected by Zariski's main theorem.
\item[(2)] Semistable limits exist: If $\C/\Delta$ is a smoothing of $p$, let $\tilde{\C}$ denote the normalization of the closure of $\C$ under some projective embedding (over $\Delta$). Then $\tilde{\C} \rightarrow \Delta$ will be proper, flat of relative dimension 1, and smooth over the generic fiber. Furthermore, by the previous remark, there exists an open immersion $\C \hookrightarrow \tilde{\C}$. Applying semistable reduction to $\tilde{\C}$, one obtains a finite base-change $\Delta' \rightarrow \Delta$, a nodal family $\tilde{\C}^{s}/\Delta'$, and a birational map $\phi:\tilde{\C}^{s} \rightarrow  \tilde{\C} \times_{\Delta} \Delta'$ \cite{DM}. Restricting $\phi$ to the open subscheme $\phi^{-1}(\C \times_{\Delta} \Delta')$ and blowing down any (-1)-curves in $\phi^{-1}(p)$ gives the desired semistable limit.
\item[(3)] Semistable limits are not unique: If $\C'/\Delta'$ is a semistable limit for $\C/\Delta$, and $\Delta'' \rightarrow \Delta'$ is any finite base-change, then taking the minimal resolution of singularities of $\C' \times_{\Delta'} \Delta''$ gives another semistable limit.
\end{itemize}
\end{remarks}

\begin{definition}\label{D:tail}
We say that a pointed curve $(E,p_1, \ldots, p_m)$ is a \emph{semistable tail} of $(C,p)$ if it arises as the exceptional curve of a semistable limit of a smoothing of $(C,p)$. 
\end{definition}

In Proposition \ref{P:semistablelimits}, we classify the semistable tails of the elliptic $m$-fold point. In order to state the result, we need a few easy facts about the dual graph of a nodal curve of arithmetic genus one. (Note that these remarks will be generalized to arbitrary Gorenstein curves of arithmetic genus one in section \ref{S:mstable}.) First, observe that if $E$ is any complete, connected, nodal curve of arithmetic genus one, then $E$ contains a connected, arithmetic genus one subcurve $Z \subset E$ with no disconnecting nodes. (If $E$ itself has no disconnecting nodes, there is nothing to prove. If $E$ has a disconnecting node $q$, then the normalization of $E$ at $q$ will comprise two connected components, one of which has arithmetic genus one. Proceed by induction on the number of disconnecting nodes.) There are two possibilities for $Z$: either it is irreducible or a ring of $\P^1$'s. By genus considerations, the connected components of $\overline{E \backslash Z}$ will each have arithmetic genus zero and will meet $Z$ in a unique point. We record these observations in the following definition.
\begin{definition}\label{D:fundecomp} If $E$ is a connected, complete, nodal curve of arithmetic genus one, there exists a decomposition
\begin{align*}
&E:=Z \cup R_1 \cup \ldots \cup R_m,\\
\end{align*}
where $Z$ is either irreducible or a ring of $\P^1$'s, and each $R_i$ has arithmetic genus zero and meets $Z$ in exactly one point. We call $Z$ the \emph{minimal elliptic subcurve} of $E$.
\end{definition}
Next, we must introduce notation to talk about the distance between various irreducible components of $E$.

\begin{definition}
If $F_1, F_2 \subset E$ are subcurves of $E$, we define $l(F_1,F_2)$ to be the minimum length of any path in the dual graph of $E$ that connects an irreducible component of $F_1$ to an irreducible component of $F_2$. If $p \in E$ is any smooth point, then there is a unique irreducible component $F_p \subset E$ containg $p$, and we abuse notation by writing write $l(p,-)$ instead of $l(F_p,-)$.
\end{definition}

Now we can state the main result of this section.
\begin{proposition}[Semistable tails of the elliptic $m$-fold point]\label{P:semistablelimits} Suppose $p \in C$ is an elliptic $m$-fold point. If $(E,p_1, \ldots, p_m)$ is a semistable pointed curve of arithmetic genus one, then $(E,p_1, \ldots, p_m)$ is a semistable tail of $(C,p)$ iff 
$$
l(Z,p_1)=l(Z,p_2)=\ldots=l(Z,p_m),
$$
where $Z \subset E$ is the minimal elliptic subcurve of $E$. If $(E,p_1, \ldots, p_m)$  satisfies this condition, we say that it is \emph{balanced}.
\end{proposition}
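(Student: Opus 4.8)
The plan is to prove both directions by explicit constructions involving blow-ups and contractions on the total space of a smoothing. For the "only if" direction, suppose $(E, p_1, \ldots, p_m)$ is the exceptional curve of a semistable limit $\phi: \C^s \to \C \times_\Delta \Delta'$. The key point is a local computation near the elliptic $m$-fold point: since $\C^s$ is regular and $\phi$ contracts $E$ to $p$, one can analyze the self-intersection and adjunction data of the components of $E$. The crucial invariant is the multiplicity with which each component of the special fiber $C^s$ appears in $\phi^*(C_0)$, where $C_0$ is the special fiber of $\C \times_\Delta \Delta'$. I would argue that $\phi^*(C_0)$ restricted to $E$ must be a principal divisor (it is the divisor of the pulled-back uniformizer), and that this multiplicity function is determined by the combinatorics: on a semistable chain emanating from $Z$, the multiplicities drop by one at each step, precisely because the local equation of the elliptic $m$-fold point, in the coordinates $(\dag)$, forces the branches to be "balanced" — each of the $m$ branches of $\tilde{C}$ at a point above $p$ enters the singularity symmetrically. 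Since $\Sigma = \{E \cap \overline{C^s \setminus E}\}$ consists of the points where the strict transform of $\tilde{C}$ meets $E$, and these lie at multiplicity-one components, the distance $l(Z, p_i)$ equals (roughly) the maximal multiplicity minus one, independent of $i$; hence all $l(Z, p_i)$ agree.

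For the "if" direction — given a balanced $(E, p_1, \ldots, p_m)$, construct a smoothing of the elliptic $m$-fold point realizing it — I would proceed by an explicit gluing/contraction construction. Start from a trivial-type family: take the normalization $\tilde{C}$ of $C$ at $p$, which is $m$ disjoint smooth branches, and a smoothing of each branch, producing a regular total space with special fiber a disjoint union of $m$ smooth curves. Then attach $E$ to these $m$ branches at $p_1, \ldots, p_m$, forming a family whose special fiber is $E \cup \tilde{C}$ glued appropriately, after making a base-change of the right degree (the common value $\ell := l(Z, p_i)$, or a suitable multiple thereof) so that the total space can be arranged to be regular with the correct $A_n$-type surface singularities resolved. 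Finally, contract $E$: by the remark invoked in the appendix discussion (contracting a smooth elliptic curve, or more generally a genus-one nodal curve, in the special fiber of a family with regular total space yields an elliptic $m$-fold point — this follows from Lemma \ref{L:Contraction} together with the classification of Appendix A), the image of $E$ is an elliptic $m$-fold point, and since $E$ met the rest of the special fiber in exactly the $m$ points $p_1, \ldots, p_m$, the resulting singularity has exactly $m$ branches. The balanced condition is exactly what guarantees that $E$ can be contracted in a single step with regular total space: the multiplicities of the components of $E$ in the special fiber, after base-change, must form a valid principal divisor, which forces the components at a common distance from $Z$ to receive the same multiplicity — and conversely, given balancedness, one solves for these multiplicities (the unique genus-one component $Z$ gets the top multiplicity, decreasing by one along each chain) and checks the contraction is negative-definite with the correct discrepancies.

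The main obstacle I anticipate is the bookkeeping in the "if" direction: verifying that, after the appropriate base-change, one really can choose the total space to be regular with nodal special fiber $E$ as exceptional curve, and that the contraction $\phi$ exists as a morphism of schemes (not just birational map) with $\Exc(\phi) = E$ containing no $(-1)$-curves. This requires tracking the intersection matrix of the components of $E$ inside $\C^s$ and showing it is negative definite — which is automatic for an exceptional locus contracted to a point — but also pinning down the self-intersections via adjunction so that the "no $(-1)$-curves" condition matches up with $E$ being the \emph{minimal} semistable model. I would handle this by induction on the number of components of $E$ outside $Z$ (equivalently on $\ell$), using the fundamental decomposition $E = Z \cup R_1 \cup \ldots \cup R_k$ of Definition \ref{D:fundecomp}: contract one outermost $\P^1$-tip at a time, reducing to a balanced tail with smaller $\ell$, until only $Z$ remains, which is handled by the direct local computation that the minimal elliptic subcurve contracts to the elliptic $m$-fold point. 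The "only if" direction is then essentially the reverse of this induction, reading off from the regularity of $\C^s$ that the multiplicity function — hence the distance function $l(Z, -)$ — is forced to be constant on $\{p_1, \ldots, p_m\}$.
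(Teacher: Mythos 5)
There is a genuine gap, and it occurs in both directions for essentially the same reason: you never bring the dualizing sheaf into the argument, and the invariant you propose to use instead carries no information. In the ``only if'' direction you track the multiplicities of the components of $C^{s}$ in $\phi^*(C_0)$, the pullback of the special fiber. But $C_0$ is the divisor of the uniformizer and the special fiber of the semistable model $\C^{s}$ is reduced (it is nodal, by the definition of a semistable limit), so every component of $E$ appears in $\phi^*(C_0)$ with multiplicity exactly one; nothing ``drops by one along each chain,'' and no conclusion about $l(Z,p_i)$ can be extracted from it. The object that does behave the way you describe is the discrepancy divisor $D$ defined by $\phi^*\omega_{\C/\Delta}=\omega_{\C^{s}/\Delta}(D)$, which exists because both families are Gorenstein. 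Its boundary values are forced by Proposition \ref{P:dualizingsheaf} (since $\pi^*\omega_{C}=\omega_{\tilde{C}}(2p_1+\ldots+2p_m)$ while $\omega_{\C^{s}/\Delta}|_{\tilde{C}}=\omega_{\tilde{C}}(p_1+\ldots+p_m)$, every component of $E$ meeting $\tilde{C}$ has coefficient $1$), and the coefficients increase by one toward $Z$ because $\omega_{\C^{s}/\Delta}(D)$ restricts trivially to each contracted component; this is what yields $d(Z)=l(Z,p_1)=\ldots=l(Z,p_m)$. Without using the Gorenstein structure you cannot get balancedness at all: a non-Gorenstein genus-one singularity with $m$ branches would have different semistable tails, so an argument that never invokes $\omega_C$ cannot single out the elliptic $m$-fold point.

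In the ``if'' direction the step you cite as known --- that contracting a genus-one \emph{nodal} curve in a family with regular total space produces an elliptic $m$-fold point --- is circular: it is false for unbalanced $E$ and is precisely the content of the proposition. (The paper's remark concerns a smooth elliptic curve, which is automatically balanced.) Moreover, a contraction obtained from negative-definiteness alone is unique once $E$ is fixed, but there is no reason its image point is Gorenstein; Lemma \ref{L:Contraction}(4) gives Gorenstein-ness only when the contraction is defined by a line bundle of the form $\omega_{\C/\Delta}(D+\Sigma)$ with $D$ supported on $E$. The actual work, which your proposal only gestures at, is to attach a fixed smooth $m$-pointed curve to $E$ at $p_1,\ldots,p_m$, smooth the resulting nodal curve with regular total space (no base change or $A_n$-resolution bookkeeping is needed), exhibit $D=\sum_{F\subset E}(l+1-l(F,Z))F$ and verify by the intersection computation that $\omega_{\C^{s}/\Delta}(D)|_F\simeq\O_F$ for every component $F\subset E$ --- this is exactly where balancedness enters --- and then conclude from Lemma \ref{L:Contraction}(3),(4) that the image point is Gorenstein with $m$ branches and $\delta=m$, hence an elliptic $m$-fold point by Proposition \ref{P:genus1}. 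Your proposed induction, contracting one outer $\P^1$ at a time, also destroys the regularity of the total space at the first step, so the inductive hypothesis as stated would not apply.
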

The proof of this statement is fairly involved. To get a feeling for why it should be true, let us consider some simple examples. First, suppose $E$ is an irreducible curve of arithmetic genus one. Then $Z=E$ and $l(Z,p_i)=0$ for all $i$, so the condition of being balanced is vacuous. In other words, every irreducible pointed elliptic curve $(E,p_1, \ldots, p_m)$ arises as the semistable limit of an elliptic $m$-fold point. To see this explicitly, just attach (nodally) an arbitrary curve $\tilde{C}$ to $E$ along $p_1, \ldots, p_m$, and smooth the curve $\tilde{C} \cup E$ to a family $\C/\Delta$ with smooth total space. Then one may consider the contraction associated to a high power of $\omega_{\C/\Delta}(E)$; using Lemma \ref{L:Contraction}, one can check that this contraction replaces $E$ by an elliptic $m$-fold point, and thus exhibits $(E,p_1, \ldots, p_m)$ as a semistable tail for the elliptic $m$-fold point.

For a second example, suppose that $E=Z \cup R_1 \cup \ldots \cup R_m$, where $Z$ is a smooth elliptic curve, $R_1, \ldots, R_m$ are chains of $\P^1$'s, and the marked point $p_i$ lies at the end of the chain $R_i$. Then $(E,p_1, \ldots, p_m)$ is balanced iff each chain $R_i$ has the same number of components. Why should this be a necessary condition for $(E,p_1, \ldots, p_m)$ to be a semistable tail of $(C,p)$? Well, if $\C/\Delta$ is a smoothing of $(C,p)$, and
\[
\xymatrix{
\C^{ss} \ar[rr]^{\phi} \ar[rd]&&\C \ar[ld]\\
&\Delta&}
\]
is a birational contraction from a semistable curve with exceptional curve $(E,p_1,\ldots, p_m)$, then $\phi$ factors through the stable reduction of $\C^{ss}$, i.e. the birational morphism $\C^{ss} \rightarrow \C^{s}$ obtained by blowing down the chains $R_1, \ldots, R_m.$ The exceptional locus of $\C^{s} \rightarrow \C$ is now an irreducible elliptic curve $Z$, but the total space $\C^{s}$ has singularities of type $(xy-t^{l(p_i,Z)})$ at the points $C^{s} \cap \overline{C^{s} \backslash Z}$. The key observation is that, since $\C \rightarrow \Delta$ has Gorenstein fibers, $\omega_{\C/\Delta}$ is invertible and we must have
$$
\phi^*\omega_{\C/\Delta}=\omega_{\C^{s}/\Delta}(D),
$$
where $D$ is a Cartier divisor supported on $Z$. Since $Z$ is irreducible, this means $D=nZ$ for some $n\,|\,\text{lcm}_{i}\{l(p_i,Z)\}$. Furthermore, since $\phi$ contracts $Z$, we must have $\omega_{\C/\Delta}(D)|_Z \simeq \O_{Z}$. One easily sees that this is possible if and only if $l(p_1,Z)=\ldots=l(p_m,Z)=l$ and $D=lZ$.

The proof of Proposition \ref{P:semistablelimits}  generalizes this idea to the case where $R_1, \ldots, R_m$ are trees of arbitrary combinatorial type. Thus, the true content of Proposition \ref{P:semistablelimits} is that the only obstruction to $(E,p_1, \ldots, p_m)$ being a semistable tail for an elliptic $m$-fold point comes from the necessity of being able to build a line-bundle of the form
$ \omega_{\C^{ss}/\Delta}(D),$ with $\Supp D \subset E$ and $\omega_{\C/\Delta}(D)|_{E} \simeq \O_{E}$, on some semistable curve $\C^{ss}/\Delta$ containing $E$ in the special fiber.

To prove Proposition \ref{P:semistablelimits}, we need the following lemma. In conjunction with the classification of singularities in appendix A, it tells us that whenever we contract an elliptic curve $E$ in the special fiber of a 1-parameter family, \emph{using a line-bundle of the form $\omega_{\C/\Delta}(D)$} with $\Supp D \subset E$, then the resulting special fiber has an elliptic $m$-fold point. Without using a line-bundle of this special form, one cannot guarantee that the resulting curve singularity is Gorenstein.

\begin{lemma}[Contraction lemma]\label{L:Contraction}
Let $\pi:\C \rightarrow \Delta$ be projective and flat of relative dimension one, with smooth general fiber and connected reduced special fiber. Let $\L$ be a line-bundle on $\C$ with positive degree on the generic fiber and non-negative degree on each irreducible component of the special fiber. Set
$$
E = \{ \text{\emph{Irreducible components }} F \subset C \,|\, \deg\L|_{F}=0 \},
$$
and assume that
\begin{itemize} 
\item[(1)]$E$ is connected of arithmetic genus one,
\item[(2)] $\L|_{E} \simeq \O_{E}$,
\item[(3)]Each point $p \in \overline{C \backslash E} \cap E$ is a node of $C$,
\item[(4)]Each point $p \in \overline{C \backslash E} \cap E$ is a regular point of $\C$.
\end{itemize}
Then $\L$ is $\pi$-semiample and there exists a diagram
\[
\xymatrix{\C \ar[rr]^{\phi} \ar[rd]_{\pi} &&\C' \ar[ld]^{\pi'} &\!\!\!\!\!\!\!\!\!\!\!\!\!\!\!\!:=\Proj \left( \oplus_{m \geq 0} \pi_*\L^m \right)\\
&\Delta&&
}
\]
where $\phi$ is proper, birational, and $\Exc(\phi)=E$. Furthermore, we have
\begin{itemize}
\item[(1)] $\C'/\Delta$ is flat and projective, with connected reduced special fiber.
\item[(2)] $\phi|_{\overline{C/E}}:\overline{C/E} \rightarrow C'$ is the normalization of $C'$ at $p:=\phi(E)$,
\item[(3)] The number of branches and the $\delta$-invariant of the isolated curve singularity $p \in C'$ are given by 
\begin{align*}
m&=|\overline{C \backslash E} \cap E|\\
\delta&=p_a(E)+m-1.
\end{align*}
\end{itemize}
If, in addition, we assume that $\omega_{\C/\Delta}$ is invertible and that
$$\L \simeq \omega_{\C/\Delta}(D+\Sigma),$$ where $D$ is a Cartier divisor supported on $E$, and $\Sigma$ is a Cartier divisor disjoint from $E$, then
\begin{itemize}
\item[(4)] $\omega_{\C'/\Delta}$ is invertible. Equivalently, $p \in C'$ is a Gorenstein curve singularity.
\end{itemize}
\end{lemma}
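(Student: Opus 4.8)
The plan is to construct $\C'$ directly as $\Proj \left( \oplus_{m \geq 0} \pi_*\L^m \right)$ and to analyze the contraction $\phi$ locally, separating the argument into the ``easy'' structural claims (1)--(3), which follow from general semiampleness/contraction machinery together with a local computation on $\overline{C \backslash E}$, and the ``hard'' Gorenstein claim (4), which requires a genuine residue computation exploiting the hypothesis $\L \simeq \omega_{\C/\Delta}(D+\Sigma)$. First I would verify $\pi$-semiampleness: since $\L$ has positive degree on the generic fiber and non-negative degree on every component of $C$, with $E$ the locus of degree zero, I want to show $\L|_E \simeq \O_E$ together with $p_a(E)=1$ forces $H^1(E,\L^m|_E)=0$ for $m \gg 0$ (using that $\L^m|_E \simeq \O_E$ is the dualizing-sheaf-degree-zero case on a genus one curve, so $h^1 = h^0 = 1$), and then bootstrap via the exact sequences $0 \to \L^m(-E) \to \L^m \to \L^m|_E \to 0$ and the fact that $\L$ is ample on $\overline{C \backslash E}$ relative to its contraction; Serre vanishing on $\overline{C \backslash E}$ plus cohomology and base change give that $\pi_*\L^m$ is locally free and generates $\L^m$ away from $E$ for $m \gg 0$. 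This yields the morphism $\phi: \C \to \C'$, proper and birational, contracting exactly $E$ to a point $p$, with $\C'/\Delta$ flat and projective (flatness over the DVR $\Delta$ is automatic since $\C'$ is integral and dominates $\Delta$); $\overline{C \backslash E} \to C'$ is finite and birational, and since $\overline{C \backslash E}$ is the normalization of $C$ along the nodes $\overline{C \backslash E} \cap E$ (hypotheses (3), (4)) one checks it is the normalization of $C'$ at $p$. The numerical claim $m = |\overline{C \backslash E} \cap E|$ is then immediate, and $\delta = p_a(E) + m - 1$ follows by comparing arithmetic genera: $p_a(C') = p_a(C) - p_a(E)$ because contracting a connected genus one curve along $m$ nodes drops the arithmetic genus by exactly $p_a(E) + m - 1 - (m-1) = p_a(E)$... more carefully, $p_a(C) = p_a(\overline{C\backslash E}) + p_a(E) + (m-1)$ and $p_a(C') = p_a(\overline{C\backslash E}) + \delta$, so $\delta = p_a(E) + m - 1$.

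For the local structure at $p \in C'$, I would pass to $\hat{\O}_{C',p}$ and identify it, via $\phi$, with the subring of $\hat{\O}_{\overline{C\backslash E}, \pi^{-1}(p)} = \oplus_{i=1}^m k[[t_i]]$ consisting of functions that ``extend'' across the contracted curve $E$ — concretely, $\hat{\O}_{C',p} = \pi_*\O_{\C'}$ near $p$, and a function on $\overline{C\backslash E}$ near the $p_i$ lies in $\O_{C'}$ iff it is the restriction of a section of $\O_{\C'}$, iff (pulling back to $\C$) it extends to a regular function on a neighborhood of $E$ in $\C$. Using $H^0(E, \O_E) = k$ one sees such a function must be constant on $E$, hence all $m$ branches agree in their value at $p$; this already shows $p$ has $m$ branches glued at a point, consistent with $\delta \geq m-1$.

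The main obstacle — and the part the proof really turns on — is claim (4). Here the hypothesis $\L \simeq \omega_{\C/\Delta}(D + \Sigma)$ with $\Supp D \subset E$ and $\Sigma$ disjoint from $E$ is essential. The idea: $\omega_{\C'/\Delta}$ is the relative dualizing sheaf, and I want to produce an invertible sheaf on $\C'$ that restricts to it. The natural candidate is $\phi_* \L(-\Sigma') = \phi_*\omega_{\C/\Delta}(D)$ where $\Sigma'$ is the image of $\Sigma$; since $\L|_E \simeq \O_E$ and $D$ is supported on $E$, the line bundle $\omega_{\C/\Delta}(D)$ is trivial on $E$, so it descends to a line bundle $M$ on $\C'$ with $\phi^* M \simeq \omega_{\C/\Delta}(D)$ (descent for line bundles trivial on the fibers of $\phi$, using $R\phi_*\O_\C = \O_{\C'}$ which holds because the fibers of $\phi$ are $E$ with $H^1(E,\O_E) = k$... here one must be slightly careful — $R^1\phi_*\O_\C$ is supported at $p$ and is $1$-dimensional, reflecting exactly that $p_a$ drops by one). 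I would then show $M \simeq \omega_{\C'/\Delta}$ by checking they agree away from $p$ (clear, since $\phi$ is an isomorphism there and $D$ is $\phi$-exceptional) and that $M$ is reflexive/invertible at $p$ by construction, while $\omega_{\C'/\Delta}$ is reflexive (depth $2$); two reflexive sheaves on a normal surface agreeing in codimension one are isomorphic. Concretely, to nail down the comparison at $p$, I would use the residue/adjunction description of $\omega_{\C'/\Delta}$: sections near $p$ are rational relative differentials on $\overline{C \backslash E}$ with at worst double poles along the $p_i$ satisfying $\sum_i \Res_{p_i}(f \omega) = 0$ for all $f \in \hat{\O}_{C',p}$; and the pullback identity $\phi^* \omega_{\C/\Delta} = \omega_{\C^s/\Delta}(D)$-type relation (adjunction for $\phi$, combined with $K_{\C} = \phi^* K_{\C'} + (\text{exceptional})$) forces the exceptional coefficients to be exactly what makes $\omega_{\C'/\Delta}$ the descent of $\omega_{\C/\Delta}(D)$, hence invertible. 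The delicate point throughout is bookkeeping the discrepancy of $\phi$ along $E$ and verifying that the trivialization $\omega_{\C/\Delta}(D)|_E \simeq \O_E$ is precisely the obstruction-vanishing needed for descent — this is where the hypothesis on the form of $\L$ cannot be dropped, matching the remark preceding the lemma that a general degree-zero $\L|_E$ would not yield a Gorenstein singularity.
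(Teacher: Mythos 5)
Your treatment of semiampleness and of claims (1)--(3) follows essentially the paper's route, but one step is garbled: you assert that $H^1(E,\L^m|_E)=0$ for $m\gg 0$ while simultaneously (and correctly) noting $h^0=h^1=1$ for $\O_E$ on a genus-one curve. The vanishing you actually need is not on $E$ but for the twisted sequence you write down: one wants surjectivity of $\pi_*\L^m\rightarrow\pi_*\O_E$, which follows from $R^1\pi_*\L^m(-E)=0$; on the special fiber this reduces to $H^1(E,\O_E(-E))=0$ (note $\O_E(-E)$ has positive degree because $E^2<0$) together with ampleness of $\L$ on $\overline{C\backslash E}$. Your $\delta$-computation is the same bookkeeping as the paper's Euler-characteristic argument; it silently uses $p_a(C)=p_a(C')$ (constancy in flat families with common generic fiber), and is cleaner phrased with $\chi$ since $\overline{C\backslash E}$ need not be connected.

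The genuine gap is in (4). You obtain the candidate line bundle $M$ on $\C'$ with $\phi^*M\simeq\omega_{\C/\Delta}(D)$ by appealing to ``descent for line bundles trivial on the fibers of $\phi$.'' That principle requires triviality on the formal fiber, and the usual way to promote triviality on $E$ to the infinitesimal neighborhoods is the vanishing $R^1\phi_*\O_{\C}=0$ --- which fails here, precisely because $p_a(E)=1$, as you yourself observe when you note that $R^1\phi_*\O_{\C}$ is one-dimensional at $p$. For an elliptic contraction a line bundle trivial on $E$ need not descend, so as written the existence of $M$ is unproved, and this is exactly the delicate point flagged in the remark preceding the lemma. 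The repair --- and the paper's actual argument --- is that no descent lemma is needed: since $\C'=\Proj\left(\oplus_{m\geq 0}\pi_*\L^m\right)$ and $\L$ is $\pi$-semiample, a sufficiently divisible power of $\L$ is pulled back from $\C'$ (and using two coprime such powers one gets a line bundle $\O_{\C'}(1)$ with $\phi^*\O_{\C'}(1)\simeq\L$); since $\Sigma$ is disjoint from $\Exc(\phi)$, its image $\Sigma'$ is Cartier on $\C'$, so $\omega_{\C/\Delta}(D)\simeq\L(-\Sigma)\simeq\phi^*\left(\O_{\C'}(1)(-\Sigma')\right)$ is manifestly a pullback. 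From there your finish is the paper's: $\O_{\C'}(1)(-\Sigma')$ and $\omega_{\C'/\Delta}$ agree off the point $p$, both are $S_2$ on the normal surface $\C'$, hence isomorphic, and invertibility of $\omega_{C'}$ follows by base change.
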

\begin{proof}
To prove that $\L$ is $\pi$-semiample, we must show that the natural map
$$\pi^*\pi_*\L^m \rightarrow \L^m$$
is surjective for $m>>0$. Since $\L$ is ample on the general fiber of $\pi$, it suffices to prove that for each point $x \in C$ there exists a section
$$
s_x \in \pi_*\L^{m}|_0 \subset H^0(C,L^m)
$$
which is non-zero at $x$. Our assumptions imply that $E$ is a Cartier divisor on $\C$, so we have an exact sequence
$$
0 \rightarrow \L^{m}(-E) \rightarrow \L^{m} \rightarrow \O_{E} \rightarrow 0.
$$
Pushing-forward, we obtain
$$
0 \rightarrow \pi_*\L^{m}(-E) \rightarrow \pi_*\L^m \rightarrow \pi_*\O_{E} \rightarrow R^1\pi_*\L^m(-E),
$$
and we claim that $R^1\pi_*\L^m(-E)=0$ for $m>>0$. Since $\L^{m}(-E)$ is flat over $\Delta$, it is enough to prove that this line-bundle has vanishing $H^1$ on fibers for $m>>0$. Since $\L$ is ample on the generic fiber, we only need  to consider the special fiber, where we have an exact sequence
$$0 \rightarrow L^{m} \otimes I_{E/C} \rightarrow \L^{m}(-E)|_{C} \rightarrow \O_{E}(-E) \rightarrow 0$$
We have $H^1(E, \O_{E}(-E))=0$, since $E^{2}<0$ and $E$ has arithmetic genus one. On the other hand, since $L^{m} \otimes I_{E/C}$ is supported on $\overline{C \backslash E}$, we have
$$
H^1(C, L^{m} \otimes I_{E/C})=H^1(\overline{C \backslash E}, (L^{m} \otimes I_{E/C})|_{\overline{C \backslash E}})=0
$$
for $m>>0$, since $L|_{\overline{C \backslash E}}$ is ample. Thus, $H^1(C,\L^{m}(-E)|_{C})=0$. This vanishing has two consequences: First, we have a surjection
$$
\pi_*\L^{m}|_{0} \rightarrow \pi_*\O_{E}|_{0} \simeq k,
$$
so there exists a section $s \in \pi_*\L^{m}|_{0}$ which is constant and non-zero along $E$. Second, we have
$$
\pi_*\L^{m}(-E)|_{0}=H^0(C,L^{m}\otimes I_{E/C}) \subset \pi_*\L^{m}|_{0},
$$
which implies the existence of non-vanishing sections at any point of $\overline{C\backslash E}$.

Since $\L$ is $\pi$-semiample, we obtain a proper, birational contraction $\phi: \C \rightarrow \C'$ with $\Exc(\phi)=E$ and $\phi_*\O_{\C}=\O_{\C'}$.  Since $\C$ is normal, $\C'$ is as well. In particular, $\C'$ is Cohen-Macaulay. The special fiber $C'$ is a Cartier divisor in $\C'$, and hence has no embedded points. No component of $C'$ can be generically non-reduced because it is the birational image of some component of $\overline{C \backslash E}$. Thus, $C'$ is reduced. $C'$ is connected because it is the continuous image of $C,$ which is connected. Finally, since $\C'$ is integral and $\Delta$ is a discrete valuation ring, the flatness of $\pi'$ is automatic. This proves (1).

Conclusion (2) is immediate from the observation that $\overline{C \backslash E}$ is smooth along the points $E \cap \overline{C \backslash E}$ and maps isomorphically to $C'$ elsewhere. Since the number of branches of the singular point $p \in C'$ is, by definition, the number of points lying above $p$ in the normalization, we have
$$m=|\overline{C \backslash E} \cap E|.$$
To obtain $\delta=p_a(E)+m-1$, note that
\begin{align*}
\delta&=\chi(C,\O_{\overline{C \backslash E} })-\chi(C',\O_{C'})\\
&=\chi(C,\O_{\overline{C \backslash E} })-\chi(C,\O_{C})\\
&=-\chi(C,I_{\overline{C \backslash E}}) .
\end{align*}
The first equality is just the definition of $\delta$ since $\overline{C \backslash E}$ is the normalization of $C'$ at $p$. The second equality follows from the fact that $C$ and $C'$ occur in flat families with the same generic fiber, and the third equality is just the additivity of Euler characteristic on exact sequences. Since $I_{\overline{C \backslash E}}$ is supported on $E$, we have
$$\chi(C,I_{\overline{C \backslash E}})=\chi(E,I_{\overline{C \backslash E}}|_{E})= \chi(E,\O_{E}(-E \cap \overline{C \backslash E}))=1-m-p_a(E).$$ 
This completes the proof of (3)

Finally, to prove (4), note that we have a line-bundle $\O_{\C'}(1)$ such that
$$
\phi^*\O_{\C'}(1) \simeq \omega_{\C/\Delta}(D+\Sigma).
$$
Since $\Sigma$ is a Cartier divisor on $\C$ disjoint from $\Exc(\phi)$, its image is a Cartier divisor on $\C'$, and we have
$$
\phi^*\left( \O_{\C'}(1)(-\Sigma) \right) \simeq \omega_{\C/\Delta}(D).
$$
Since $D$ is supported on $\Exc(\phi)$, we have
$$
\O_{\C'}(1)(-\Sigma)|_{\C' \backslash p} \simeq \omega_{\C'/\Delta}|_{\C' \backslash p}.
$$
Since $\omega_{\C'/\Delta}$ and $\O_{\C'}(1)(-\Sigma)$ are both $S_2$-sheaves on a normal surface and they are isomorphic in codimension one, we conclude
$$
\O_{\C'}(1)(-\Sigma) \simeq \omega_{\C'/\Delta},
$$
 i.e. the dualizing sheaf $\omega_{\C'/\Delta}$ is actually a line-bundle. Since the formation of the dualizing sheaf commutes with base-change, $\omega_{C}=\omega_{\C'/\Delta}|_{C}$ is invertible. Equivalently, $p \in C'$ is a Gorenstein singularity.
\end{proof}

Now we are ready to prove Proposition \ref{P:semistablelimits}.
\begin{figure}\label{F:balancedcurve}
\scalebox{.60}{\includegraphics{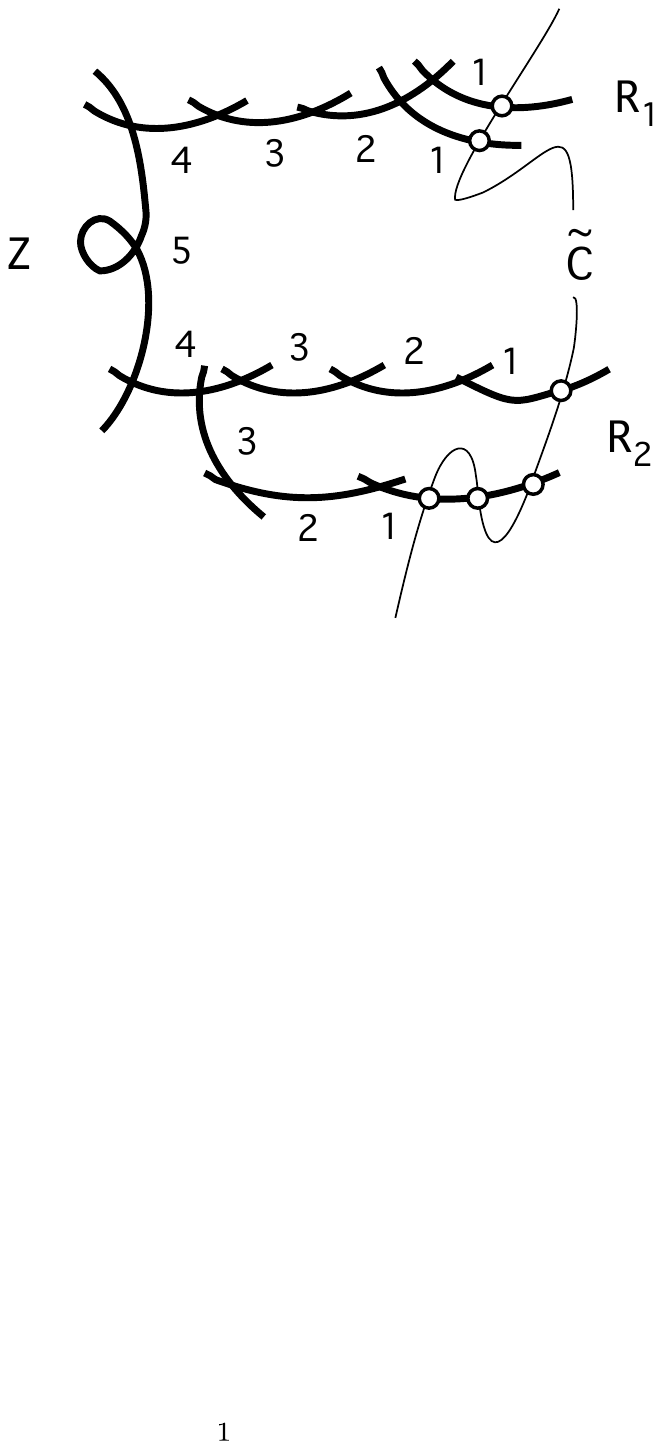}}
\caption{A balanced curve $E$, with minimal elliptic subcurve $Z$, appearing in the special fiber of a semistable family. We have labeled the multiplicities of a Cartier divisor $D$ such that $\omega_{\C/\Delta}(D)$ is trivial on every component of $E$.}\label{F:balancedcurve}
\end{figure}
\begin{proof}[$(E,p_1, \ldots,p_m)$ balanced $\implies (E,p_1, \ldots,p_m)$ is a semistable tail]
We must show that if $(E,p_1, \ldots, p_m)$ is a balanced semistable curve, then it arises as a semistable tail for some smoothing of the elliptic $m$-fold point. To construct this smoothing, start by taking $(\tilde{C}, p_1, \ldots, p_m)$ to be any complete smooth $m$-pointed curve of genus at least two, and attach $\tilde{C}$ and $E$ along $\{p_1, \ldots, p_m\}$ to form  a nodal curve
$$
C^{s}=\tilde{C} \cup E.
$$
Now let $\C^{s}/\Delta$ be any smoothing of $C^{s}$ with smooth total space. We will exhibit a birational morphism $\C^{s} \rightarrow \C$ collapsing $E$ to an elliptic $m$-fold point $p \in C$. To do this, we must build a line-bundle on $\C^{s}$ which is trivial on $E$, but has positive degree on $\tilde{C}$. We define
$$
\L:=\omega_{\C^s/\Delta}(D),
$$
where
$$
D=\sum_{F \subset E} (l+1-l(F,Z))F,
$$
with $l:=l(p_1,Z)=\ldots=l(p_m,Z)$. The multiplicities of $D$ are illustrated in figure \ref{F:balancedcurve}. If we can show that
\begin{align*}
(A)\,\,\,&\L \text{ has positive degree on the general fiber of $\pi$},\\
(B)\,\,\,&\L|_{\tilde{C}} \text{ has positive degree on $\tilde{C}$},\\
(C)\,\,\,&\L|_{F} \simeq \O_{F} \text{ for all irreducible components $F \subset E$},
\end{align*}
then $\L$ satisfies the hypotheses of Lemma \ref{L:Contraction}, so a suitably high multiple of $\L$ defines a morphism
$\phi: \C^{s} \rightarrow \C$
contracting $E$ to a single point $p$. Furthermore, the lemma implies that $p \in C$ is a Gorenstein singularity with $m$ branches and $\delta$-invariant $m$. By Proposition \ref{P:genus1}, there is a unique such singularity: $p$ must be an elliptic $m$-fold point. It follows that $\C/\Delta$ is a smoothing of the elliptic $m$-fold point, and hence that $(E,p_1, \ldots, p_m)$ is a semistable tail as desired.

Since the genus of $\tilde{C}$ is at least two, conditions $(A)$ and $(B)$ are automatic. For condition $(C)$, we write
$$
E=Z \cup R_1 \cup \ldots \cup R_m
$$ 
as in Definition \ref{D:fundecomp}, and consider the cases $F \subset R_i$ and $F \subset Z$ separately. Suppose first that $F \subset R_i$ for some $i$, and let $G_1, \ldots, G_k$ be the irreducible components of $E$ adjacent to $F$. Since the dual graph of $R_i$ is a tree meeting $Z$ in a single point, we may order the $\{G_i\}$ so that
\begin{align*}
l(G_1,Z)&=l(F,Z)-1, \\
l(G_i,Z)&=l(F,Z)+1, &2 \leq i \leq k.\\
\intertext{Since $F$ is rational and the total space $\C^s$ is regular, we have}
\deg\omega_{\C^{s}/\Delta}|_F &=-k-2\\
F.F &=-k\\
G_i.F &=1, &1 \leq i \leq k.
\end{align*}
Now, since $F, G_1, \ldots, G_k$ are the only components of $D$ meeting $F$, we have
\begin{align*}
\deg \omega_{\C^s/\Delta}(D)|_{F}=&\deg \omega_{\C^{s}/\Delta}|_{F}+(l+1-l(F,Z))(F.F)\\
&+(l+2-l(F,Z))(G_1.F)+(l-l(F,Z))((G_2+ \ldots+G_k).F)\\
=&(-k-2)+(l+1-l(F,Z))(-k)+(l+2-l(F,Z))+(l-l(F,Z))(k-1)\\
=& 0.
\end{align*}
Since $F$ is rational, this implies $\omega_{\C^s/\Delta}(D)|_{F} \simeq \O_{F}$. 

It remains to show that $\omega_{\C^s/\Delta}(D)|_{Z} \simeq \O_{Z}$. First, note that $\omega_{Z}=\O_{Z}$ (Recall that $Z$ is irreducible of arithmetic genus one, or a ring of $\P^1$'s). If $G_1, \ldots, G_k$ are the components of $E$ adjacent to $Z$, then $l(G_i,Z)=1$, so we have
\begin{align*}
\omega_{\C^s/\Delta}(D)|_{Z} &\simeq \omega_{\C^s/\Delta} \left( (l+1)Z + lG_1+ \ldots + lG_k \right)| _{Z}\\
& \simeq \omega_{Z} \otimes \O_{\C^s}(lZ+lG_1+\ldots+lG_k)|_{Z}\\
& \simeq \O_{Z}.
\end{align*}
\end{proof}

\begin{proof}[$(E,p_1, \ldots, p_m)$ a semistable tail $\implies (E,p_1, \ldots, p_m)$ balanced]
Suppose $(E,p_1, \ldots, p_m)$ is a semistable tail of the elliptic $m$-fold point. Then we have a smoothing $\C/\Delta$, a semistable limit $\C^{s}/\Delta$, and a birational morphism $\phi: \C^{s} \rightarrow \C$ with exceptional curve $E$. (Replacing $\C/\Delta$ by $\C \times_{\Delta} \Delta'/\Delta'$, we may assume that the semistable limit is defined over the same base as the smoothing.) Set
$$
\tilde{C}:= \overline{C^{s} \backslash E},
$$
and note that the restriction of $\phi$ to $\tilde{C}$ is precisely the normalization of $C$.

Since $\C \rightarrow \Delta$ and $\C^{s} \rightarrow \Delta$ are Gorenstein morphisms, they are equipped with relative dualizing sheaves and we may consider the \emph{discrepancy} of $\phi$, i.e. we have
$$
\phi^*\omega_{\C^s/\Delta}=\omega_{\C/\Delta}(D),
$$
where $D$ is a Cartier divisor supported on $E$. We may write
$$
D=\sum_{F \subset E}d(F)F,
$$
and we claim that the coefficients $d(F)$ must satisfy the following conditions.
\begin{itemize}
\item[$(A)$] If $F$ meets $\tilde{C}$, then $d(F)=1$.
\item[$(B)$] If $F,G$ are adjacent and $l(F,Z)=l(G,Z)-1$, then $d(F)=d(G)+1$.
\end{itemize}
Condition $(A)$ is easy to see: We have
$$\omega_{\C^{s}/\Delta}(D)|_{\tilde{C}} \simeq (\phi^*\omega_{\C/\Delta})|_{\tilde{C}} \simeq \phi|_{\tilde{C}}^*\, (\omega_{\C/\Delta}|_{C}) \simeq \phi|_{\tilde{C}}^*\,\omega_{C}.$$

Furthermore, since $\phi|_{\tilde{C}}$ is just the normalization of $C$, Proposition \ref{P:dualizingsheaf} implies that 
$$
\phi|_{\tilde{C}}^*\,\omega_{C} \simeq \omega_{\tilde{C}}(2p_1+ \ldots +2p_m).
$$
Putting these two equations together, we  get $\omega_{\C^{s}/\Delta}(D)|_{\tilde{C}} \simeq \omega_{\tilde{C}}(2p_1+ \ldots +2p_m).$ Since $\omega_{\C^{s}/\Delta}|_{\tilde{C}} \simeq \omega_{\tilde{C}}(p_1+ \ldots+p_m),$ $D$ must contain each component that meets $\tilde{C}$ with multiplicity one. This proves $(A)$.

Condition $(B)$ comes from the observation that
\begin{equation*}\label{trivial}
\omega_{\C^{s}/\Delta}(D)|_G \simeq \O_{G}
\end{equation*}
for each irreducible component $G \subset E$, since $E$ is contracted by $\phi$. Indeed, suppose condition $(B)$ fails for a pair of adjacent components $F,G$ with $l(F,Z)=l(G,Z)-1$. Let $H_1, \ldots, H_k$ be the remaining components of $E$ adjacent to $G$ and note that 
\begin{align*}
&l(H_i,Z)=l(G,Z)+1, &1 \leq i \leq  k.\\
\intertext{By choosing the pair $F,G$ with $l(F,Z)$ maximal, we may assume $(B)$ holds for each of the pairs $G,H_i$. Thus,}
&d(H_i,Z)=d(G,Z)-1, &1 \leq i \leq k.
\end{align*}
Since $\omega_{\C^s/\Delta}(D)|_{G} \simeq \O_{G},$ we obtain
\begin{align*}
0&=\deg \omega_{\C^s/\Delta}(D)|_{G}\\
&=\deg\omega_{\C^s/\Delta}|_G+d(F)(F.G)+d(G)(G.G)+(d(G)-1)(H_1.F+ \ldots +H_k.F)\\
&=-2+(k+1)+d(F)+d(G)(-k-1)+(d(G)-1)k,\\
&=d(F)-d(G)-1,
\end{align*}
which gives $d(F)=d(G)+1$ as desired.

Now we will show that conditions $(A)$ and $(B)$ imply that $(E,p_1,\ldots,p_m)$ is balanced. Suppose first that $Z$ is irreducible. Pick a point $p_i \in E \cap \tilde{C}$, and consider a minimum-length path from the irreducible component containing $p_i$ to $Z$. Then $l(-,Z)$ decreases by one as we move along each consecutive component, so conditions $(A)$ and $(B)$ imply that
\begin{align*}
d(Z)=l(p_i,Z).
\end{align*}
Since this holds for each point $p_i \in  E \cap \overline{C^s \backslash E}$, we have
$$
d(Z)=l(p_1,Z)=\ldots=l(p_m,Z),
$$
so $(E,p_1, \ldots, p_m)$ is balanced.

If $Z$ is a ring of $\P^1$'s, and $Z_i \subset Z$ is any irreducible component, then the same argument shows that
\begin{align*}
d(Z_i)=l(p_j,Z),
\end{align*}
for any point $p_j \in E \cap \tilde{C}$ which lies on a connected component of $\overline{C \backslash Z}$ meeting $Z_i.$ Since every connected component of $\overline{C \backslash Z}$ meets some irreducible component of $Z$, $(E,p_1, \ldots, p_k)$ will be balanced if we can show that $d(Z_i)=d(Z_j)$ for all irreducible components $Z_i,Z_j \subset Z$. Since $Z$ is a ring, it suffices to show that for each triple of consecutive components $Z_1,Z_2,Z_3$, we have
$$
2d(Z_2) = d(Z_1)+d(Z_3).
$$
To see this, let $H_1, \ldots, H_k$ be the components of $R$ adjacent to $Z_2$. By condition $(B)$ we have $d(H_i)=d(Z_2)-1$ for each $H_i$. Using  $\omega_{\C/\Delta}(D)|_{Z_2} \simeq \O_{Z_2}$, we obtain
\begin{align*}
0&=\deg \omega_{\C/\Delta}(D)|_{Z_2}\\
&=\deg \omega_{\C/\Delta}|_{Z_2}+d(Z_1)(Z_1.Z_2)+d(Z_3)(Z_3.Z_2)+d(Z_2)(Z_2.Z_2)+(d(Z_2)-1)(H_1.Z_2+ \ldots +H_k.Z_2)\\
&=k+d(Z_1)+d(Z_3)+d(Z_2)(-k-2)+(d(Z_2)-1)k,\\
&=d(Z_1)+d(Z_3)-2d(Z_2)
\end{align*}
which gives $2d(Z_2)=d(Z_1)+d(Z_3)$ as desired.
\end{proof}

\section{Construction of $\SM_{1,\A}(m)$} 
In this section, we turn from local considerations concerning the elliptic $m$-fold point to global considerations of moduli.

\subsection{Fundamental decomposition of a genus one curve}\label{S:fundecomp}
As indicated in the introduction, the reason that we can formulate a separated moduli problem for pointed curves of genus one, but not for higher genus, is the following elementary fact about the topology of a curve of arithmetic genus one.
\begin{lemma}[Fundamental Decomposition]\label{L:decomp} Let $C$ be a Gorenstein curve of arithmetic genus one. Then $C$ contains a unique subcurve $Z \subset C$ satisfying
\begin{itemize}
\item[(1)] $Z$ is connected,
\item[(2)] $Z$ has arithmetic genus one,
\item[(3)] $Z$ has no disconnecting nodes.
\end{itemize}
We call $Z$ the \emph{minimal elliptic subcurve of $C$}. We write
$$
C=Z \cup R_1 \cup \ldots \cup R_k,
$$
where $R_1, \ldots, R_k$ are the connected components of $\overline{C \backslash Z}$, and call this \emph{the fundamental decomposition of $C$}. Each $R_i$ is a nodal curve of arithmetic genus zero meeting $Z$ in a single point, and $Z \cap R_i$ is a node of $C$.
\end{lemma}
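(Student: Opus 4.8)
The plan is to distil a canonical ``core'' out of $C$ by repeatedly normalizing disconnecting nodes, and then to control everything by a single numerical inequality for subcurves. The two elementary inputs are: normalizing a reduced curve at one node raises $\chi(\O_{-})$ by exactly $1$; and for any subcurve $D\subseteq C$ of a connected curve $C$ the exact sequence
$$0\to\O_C\to\O_D\oplus\O_{\overline{C\backslash D}}\to\O_{D\cap\overline{C\backslash D}}\to 0$$
gives $\chi(\O_C)=\chi(\O_D)+\chi(\O_{\overline{C\backslash D}})-\length(D\cap\overline{C\backslash D})$, whence (using that every connected component of $\overline{C\backslash D}$ meets $D$ and has $\chi\le 1$) one gets $\chi(\O_D)\ge\chi(\O_C)$, \emph{with equality iff every connected component of $\overline{C\backslash D}$ has arithmetic genus zero and meets $D$ in a single reduced point}. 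I will also use the classification of curve singularities on a genus-one Gorenstein curve (Proposition \ref{P:genus1}) in the following form: if two subcurves of $C$ pass through a point $q$ and meet there with $\length_q=1$, then $q$ is a node of $C$ (it cannot be a smooth point of $C$, and at an elliptic $l$-fold point, $l\ge 2$, with branches split between the two subcurves, a local computation with the equations of Definition \ref{D:mpoint} forces $\length_q\ge 2$).

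For \emph{existence} I induct on the number of disconnecting nodes of $C$. If there are none, take $Z=C$. Otherwise normalize at one disconnecting node $q$: this yields $C_1\sqcup C_2$ with each $C_i$ connected, and $\chi(\O_{C_1})+\chi(\O_{C_2})=\chi(\O_C)+1=1$ together with $\chi(\O_{C_i})\le 1$ forces $\{p_a(C_1),p_a(C_2)\}=\{0,1\}$; say $p_a(C_1)=1$. Both $C_i$ are Gorenstein (their singularities are inherited from $C$), $q$ is not a node of $C_1$ while every disconnecting node of $C_1$ is a disconnecting node of $C$, so $C_1$ has strictly fewer of them; apply the inductive hypothesis to produce $Z\subset C_1\subset C$. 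Carried to completion, this realizes $Z$ as $\nu(\widehat Z)$, where $\nu\colon\widehat C\to C$ is the partial normalization at all (iteratively) disconnecting nodes: $\widehat C$ is a disjoint union of connected curves having no disconnecting node, exactly one of which --- call it $\widehat Z$ --- has arithmetic genus one, and $\nu|_{\widehat Z}$ is an isomorphism onto $Z$ because no disconnecting node of $C$ has both branches on $\widehat Z$.

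For the \emph{fundamental decomposition}, let $Z$ satisfy (1)--(3) and write $\overline{C\backslash Z}=R_1\sqcup\cdots\sqcup R_k$. Applying the inequality to $Z\subseteq C$ and observing that $\chi(\O_Z)=0=\chi(\O_C)$ forces equality, each $R_i$ has $p_a(R_i)=0$ and meets $Z$ in a single reduced point $q_i$; by the singularity classification $q_i$ is a node of $C$, so $Z$ is smooth at $q_i$ (hence $Z$ has only nodes and elliptic $l$-fold points as singularities, so it is Gorenstein). Away from $q_i$ the singularities of $R_i$ are exactly those of $C$, and $R_i$ cannot contain an elliptic $l$-fold point since that alone would give $p_a(R_i)\ge\delta\ge 1>0$; thus $R_i$ is a connected nodal curve of arithmetic genus zero, i.e.\ a tree of $\P^1$'s, and $Z\cap R_i$ is a node. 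This is the asserted statement for the $R_i$.

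For \emph{uniqueness}, let $Z'$ also satisfy (1)--(3). Since $Z'$ has no disconnecting node, no disconnecting node of $C$ has both branches on $Z'$, so the strict transform $\widehat{Z'}\subseteq\widehat C$ maps isomorphically to $Z'$ and is connected with $p_a=1$; hence it lies in a single connected component of $\widehat C$, which must be $\widehat Z$ because a connected subcurve of a connected genus-zero curve has $p_a\le 0$. Then $\widehat{Z'}\subseteq\widehat Z$ with equal arithmetic genus, so the equality case applies: $\overline{\widehat Z\backslash\widehat{Z'}}$ is a disjoint union of genus-zero curves, each meeting $\widehat{Z'}$ in one reduced point, which by the classification is a node of $\widehat Z$; these attaching points are distinct (a point lying on $\widehat{Z'}$ and on two of these curves would have three branches, hence be non-nodal, contradicting $\length=1$). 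Any one such curve then exhibits a disconnecting node of $\widehat Z$, contradicting (3) for $\widehat Z$; therefore $\widehat{Z'}=\widehat Z$ and $Z'=\nu(\widehat{Z'})=\nu(\widehat Z)=Z$. The main obstacle is the local claim that a length-one intersection of two subcurves at a point of $C$ is a node: this is the one place where the Gorenstein hypothesis is genuinely used (through the classification), and ruling out the multi-branch elliptic points requires an honest local computation with the ideals $I_m$; everything else is the above inequality applied to $Z\subseteq C$, $\widehat{Z'}\subseteq\widehat C$, and $\widehat{Z'}\subseteq\widehat Z$, together with careful tracking of which branch of each disconnecting node lands in which component of $\widehat C$.
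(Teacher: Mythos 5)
Your proof is correct, and its skeleton (induction on disconnecting nodes for existence, Euler-characteristic bookkeeping plus the appendix classification for the structure of the $R_i$) is close to the paper's, but two steps genuinely diverge. At the junction points the paper tallies $\delta$-invariants rather than intersection lengths: equality in the genus formula forces $\delta(p_i)=1$ with at least two branches, so $p_i$ is a node immediately, with no local analysis of elliptic points needed. You tally scheme-theoretic intersection lengths instead, so you must rule out a length-one intersection at an elliptic $l$-fold point; this is true but is exactly the local input you defer. It follows in one line from additivity of $\delta$: if the branches are split between $D$ and $E$ into $j$ and $l-j$ branches, each part is a union of at most $l-1$ branches with linearly independent tangent directions, hence $\delta_D=j-1$ and $\delta_E=l-j-1$, so $\length(D\cap E)=\delta_C-\delta_D-\delta_E=l-(j-1)-(l-j-1)=2$. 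For uniqueness the routes really differ: the paper argues directly that a second $Z'$ not contained in $Z$ would contain a component of some $R_i$ and a component outside it, hence by connectedness the node $Z\cap R_i$ with both branches on $Z'$, giving a disconnecting node of $Z'$ --- three lines, using only the decomposition already established. You instead pass to the partial normalization $\widehat{C}$ at all disconnecting nodes, place $\widehat{Z'}$ inside the genus-one component $\widehat{Z}$, and rerun the equality case there; this is heavier, but it buys a concrete description of $Z$ as the genus-one piece of $\widehat{C}$, which is a useful picture in its own right. Two small remarks: the distinctness of the attaching points in your final step is automatic (distinct connected components of $\overline{\widehat{Z}\backslash\widehat{Z'}}$ are disjoint), and indeed a single such component already produces the contradicting disconnecting node.
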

\begin{proof}
First, we show that $Z \subset C$ exists. If $C$ itself has no disconnecting nodes, take $Z=C$. If $C$ has a disconnecting node $p$, then the normalization of $Z$ at $p$ will comprise two connected components, one of which has arithmetic genus one. Proceed by induction on the number of disconnecting nodes.

Next we show that the connected components of $\overline{C \backslash Z}$ each have arithmetic genus zero, and meet $Z$ in a single point, which is a simple node of $C$. If $R_1, \ldots, R_k$ are the connected components of $\overline{C \backslash Z}$ and $p_1, \ldots, p_l$ the points of intersection $Z \cap (R_1 \cup \ldots \cup R_k)$, we have
$$
1=p_a(C)=p_a(Z)+\sum_{i=1}^{k}p_{a}(R_i)+\sum_{i=1}^{l}\delta(p_i)+1-k.
$$
Since $p_a(R_i) \geq 0$, $\delta(p_i) \geq 1$, and $l \geq k$, we see that equality holds iff $p_a(R_i) = 0$, $\delta(p_i)=1$, and $l=k$. Since $R_i$ is Gorenstein of arithmetic genus zero, Proposition \ref{P:genus0} implies that $R_i$ is nodal. Since $p_i$ is a Gorenstein curve singularity with $\delta(p_i)=1$ and at least two branches, it must have exactly two branches. Then Corollary \ref{C:gorenstein} implies that $p_i$ is a node. Finally, the fact that $l=k$ says precisely that each connected component $R_i$ meets $Z$ in a single point.

It remains to show that $Z$ is unique. By symmetry, it is enough to show that if $Z'$ satisfies (1)-(3) then $Z' \subset Z$. If this fails then $Z' \cap R_i \neq \emptyset$ for some $i$. Since $p_a(Z')=1$, $Z'$ cannot be contained in $R_i$, so $Z'$ meets $Z$. But then, since $Z'$ is connected, $Z'$ contains the disconnecting node $R_i \cap Z$, a contradiction.
\end{proof}
\begin{corollary}\label{C:minimality}
Let $C$ be a Gorenstein curve of arithmetic genus one with minimal elliptic subcurve $Z$. If $E \subset C$ is any connected arithmetic genus one subcurve of $C$, then $Z \subset E$.
\end{corollary}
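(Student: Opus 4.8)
The plan is to reduce immediately to the uniqueness clause of Lemma~\ref{L:decomp}. Namely, I will produce a subcurve $Z' \subseteq E$ which is connected, has arithmetic genus one, and has no disconnecting nodes. Since $Z'$ is then a subcurve of $C$ satisfying conditions (1)--(3) of Lemma~\ref{L:decomp}, the uniqueness part of that lemma forces $Z' = Z$, and hence $Z \subseteq E$, as desired.

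To build $Z'$, I would induct on the number of irreducible components of $E$, running exactly the existence argument from the proof of Lemma~\ref{L:decomp} but inside $E$ rather than inside $C$. In the base case $E$ is irreducible of arithmetic genus one, so any node of $E$ is a self-node and in particular non-disconnecting, and we take $Z' := E$. In general, if $E$ has no disconnecting node take $Z' := E$; otherwise let $q \in E$ be a disconnecting node. Since $q$ is disconnecting it lies on two distinct components, so the normalization of $E$ at $q$ is a disjoint union $E_1 \sqcup E_2$ of two connected curves, each a proper union of irreducible components of $E$. As normalizing a single node raises $\chi(\mathcal{O})$ by one, $\chi(\mathcal{O}_{E_1}) + \chi(\mathcal{O}_{E_2}) = \chi(\mathcal{O}_E) + 1 = 1$; since each $E_i$ is reduced and connected we have $\chi(\mathcal{O}_{E_i}) \leq 1$, so exactly one of them, say $E_1$, has arithmetic genus one. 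Then $E_1 \subsetneq E$ is connected of arithmetic genus one with strictly fewer components, so by induction $E_1$ contains a subcurve satisfying (1)--(3), which we take as $Z'$.

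The one point that needs care --- and the only place the argument could go wrong --- is that one cannot simply invoke the Fundamental Decomposition directly for $E$, since a subcurve of a Gorenstein curve need not be Gorenstein, so $E$ may fail the hypothesis of Lemma~\ref{L:decomp}. This is harmless: the \emph{existence} half of that lemma's proof is purely combinatorial (it only uses nodes and normalization, with no reference to the Gorenstein condition), so it applies verbatim to $E$; the Gorenstein hypothesis enters only through the \emph{uniqueness} half, which we apply to $C$, where it does hold. Alternatively, one can give a direct proof: first observe $E \cap Z \neq \emptyset$, since otherwise the connected curve $E$ lies in a single connected component $R_i$ of $\overline{C \backslash Z}$, which is a tree of $\P^1$'s, forcing $p_a(E) = 0$; then a Mayer--Vietoris computation for $E = (E \cap Z) \cup \overline{E \backslash Z}$, using that the connected components of $\overline{E \backslash Z}$ are forests of $\P^1$'s each meeting $Z$ in at most one point, gives $\chi(\mathcal{O}_{E \cap Z}) \leq \chi(\mathcal{O}_E) = 0$, which is incompatible with $E \cap Z$ being a nonempty proper subcurve of $Z$ (inspecting the two cases $Z$ irreducible and $Z$ a ring of $\P^1$'s). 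Either way one concludes $E \cap Z = Z$.
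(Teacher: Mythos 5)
Your proof is correct and takes essentially the same route as the paper: the paper's argument is simply that the minimal elliptic subcurve of $E$ is necessarily the minimal elliptic subcurve of $C$, i.e.\ it runs the existence half of Lemma~\ref{L:decomp} inside $E$ and then invokes the uniqueness half for $C$, exactly as you do. Your remark that only the purely combinatorial existence step is applied to $E$ (so that $E$ need not be Gorenstein) is a point the paper leaves implicit, but it does not alter the argument.
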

\begin{proof}
The minimal elliptic subcurve of $E$ is necessarily the minimal elliptic subcurve of $C$, namely $Z$. Thus, $Z \subset E$.
\end{proof}

The following lemma gives an exact characterization of the `minimal elliptic subcurves' appearing in Lemma \ref{L:decomp}.
\begin{lemma}\label{L:MinEllipticSub}
Suppose $Z$ is Gorenstein of arithmetic genus one and has no disconnecting nodes. Then $Z$ is one of the following:
\begin{itemize}
\item[(1)] A smooth elliptic curve,
\item[(2)] An irreducible rational nodal curve,
\item[(3)] A ring of $\P^{1}$'s, or
\item[(4)] $Z$ has an elliptic $m$-fold point $p$ and the normalization of $Z$ at $p$ consists of $m$  distinct, smooth rational curves.
\end{itemize}
Furthermore, in all four cases, $\omega_{Z} \simeq \O_{Z}$.
\end{lemma}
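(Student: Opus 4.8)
The plan is to classify $Z$ topologically by a genus count on its dual graph, leaning on the classification of Gorenstein genus-one singularities from Appendix A, and then to deduce $\omega_Z\simeq\O_Z$ cohomologically. Let $\nu:\tilde Z\to Z$ be the normalization, with irreducible components $\tilde Z_1,\dots,\tilde Z_c$ of genera $g_1,\dots,g_c$ (each $\tilde Z_i$ is smooth, hence $\simeq\P^1$ whenever $g_i=0$). Combining $\chi(\O_{\tilde Z})-\chi(\O_Z)=\sum_p\delta(p)$, the identity $\delta(p)=g(p)+m(p)-1$ (here $g(p)$ is the genus of the singularity $p$ and $m(p)$ its number of branches), and $p_a(Z)=1$, a short bookkeeping computation yields
\[
p_a(Z)=\sum_{i=1}^{c}g_i+\sum_{p}g(p)+b_1(\Gamma),
\]
where $b_1(\Gamma)$ is the first Betti number of the graph $\Gamma$ with one vertex for each $\tilde Z_i$, one extra vertex for each singular point $p$ of $Z$, and one edge joining $p$ to each of its $m(p)$ branches. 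By Appendix A every singular point of $Z$ is a node ($g(p)=0$) or an elliptic $l$-fold point ($g(p)=1$). Since $p_a(Z)=1$ and the three summands above are nonnegative integers, exactly one of them equals $1$.

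Next I would dispatch the three resulting cases. If $\sum_i g_i=1$, then $\Gamma$ is a tree and every singular point is a node; but in a curve whose dual graph is a tree each node is disconnecting (removing the corresponding vertex separates the tree), which is excluded, so $Z$ is smooth, i.e. a smooth elliptic curve, case (1). If $\sum_p g(p)=1$, there is a unique elliptic $l$-fold point $p$ and $\Gamma$ is again a tree, so by the same argument $Z$ has no nodes and $p$ is its only singularity; moreover the tree condition prevents two branches of $p$ from meeting the same component (that would create a cycle in $\Gamma$), so $\tilde Z$ is a disjoint union of exactly $l$ copies of $\P^1$, which is case (4) (with $l=1$ the cuspidal cubic). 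If $b_1(\Gamma)=1$, then $Z$ is nodal with rational components and its dual graph is connected, bridgeless (the no-disconnecting-nodes hypothesis), and has a single independent cycle, hence is itself a cycle: a loop gives an irreducible rational nodal curve, case (2), and a cycle of length $\ge 2$ gives a ring of $\P^1$'s, case (3).

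For the last assertion, Serre duality gives $\dim_k H^0(Z,\omega_Z)=\dim_k H^1(Z,\O_Z)=p_a(Z)=1$, so I would fix a nonzero section $s\in H^0(Z,\omega_Z)$ and show it vanishes nowhere. In each of the four cases $\nu^*\omega_Z$ has degree $0$ on every component of $\tilde Z$: for case (4) this is Proposition \ref{P:dualizingsheaf}(2); for cases (2) and (3) it follows from the standard identity $\nu^*\omega_Z\simeq\omega_{\tilde Z}\bigl(\sum_q\nu^{-1}(q)\bigr)$ for a nodal curve (the sum over nodes $q$), each $\P^1$ carrying exactly two of these preimages; and case (1) is classical. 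A section of a degree-$0$ line bundle on a smooth rational or smooth genus-one curve is either identically zero or nowhere vanishing, so on each component $s$ is of one of these two types; since $Z$ is connected and $s\neq 0$, comparing the values of $s$ on the two branches at each node forces $s$ to be nowhere vanishing on every component. Hence the induced map $\O_Z\to\omega_Z$ is an isomorphism.

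The step I expect to require the most care is the elliptic case above — ruling out any further singular point once an elliptic $l$-fold point is present, and checking that its branches land on $l$ distinct smooth rational components. A convenient alternative to the dual-graph argument there is to normalize $Z$ only at $p$, deduce from an Euler-characteristic count that the result has exactly $l$ connected components of arithmetic genus zero, invoke Proposition \ref{P:genus0} to see these are trees of $\P^1$'s, and observe that any node surviving inside one of them would be a disconnecting node of $Z$. Beyond that subtlety, the lemma is essentially a combinatorial repackaging of the singularity classification of Appendix A, with the remaining steps being routine graph theory or direct appeals to Propositions \ref{P:genus0} and \ref{P:dualizingsheaf}.
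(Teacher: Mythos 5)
Your argument is correct, but it takes a genuinely different route from the paper's on both halves. For the classification, the paper splits on whether $Z$ has a non-nodal singular point: if so, Corollary \ref{C:gorenstein} produces an elliptic $m$-fold point whose partial normalization is a union of connected nodal genus-zero curves, each forced to be smooth by the no-disconnecting-nodes hypothesis; if not, it normalizes at a single node and checks directly that the resulting tree of $\P^1$'s must be a chain. You instead prove the additivity formula $p_a(Z)=\sum_i g_i+\sum_p g(p)+b_1(\Gamma)$ on the component/singularity incidence graph and let the trichotomy fall out of which nonnegative summand equals one; this buys a uniform treatment of cases (1)--(4), at the cost of the (routine, but worth spelling out) translation of ``no disconnecting nodes'' into ``no bridges of $\Gamma$'': any bridge is incident to a node vertex of degree two, and deleting that vertex disconnects $\Gamma$, i.e.\ normalizing at that node disconnects $Z$. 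For triviality of $\omega_Z$, the paper writes down an explicit nowhere-vanishing section in case (4) (the differential $\frac{dt_1}{t_1^2}+\cdots+\frac{dt_{m-1}}{t_{m-1}^2}-\frac{dt_m}{t_m^2}$ from the proof of Proposition \ref{P:dualizingsheaf}) and treats (1)--(3) as clear, whereas you argue cohomologically: $h^0(\omega_Z)=1$ by duality, $\omega_Z$ has degree zero on every component, and a nonzero section propagates across the curve by connectedness. One wording fix is needed there: in case (4) with $m\ge 2$ the components meet only at the elliptic $m$-fold point, not at nodes, so ``comparing values on the two branches at each node'' is vacuous; compare values at every singular point instead --- the value of $s$ in the one-dimensional fibre $\omega_Z\otimes k(p)$ is shared by all branches, so if $s$ vanishes identically on one branch its restriction to every other branch vanishes at $p$, hence identically by the degree-zero computation you already made via Proposition \ref{P:dualizingsheaf}. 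With that adjustment, nowhere-vanishing on each component does imply $\O_Z\to\omega_Z$ is surjective, hence an isomorphism, and the proof is complete.
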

\begin{proof}
First, suppose $Z$ has a non-nodal singular point $p$. Then by Corollary \ref{C:gorenstein}, $p$ is an elliptic $m$-fold point for some integer $m$ and the normalization of $Z$ at $p$ consists of $m$ distinct connected nodal curves of arithmetic genus zero. But a (nodal) curve of arithmetic genus zero with no disconnecting nodes must be smooth, so (4) holds.

Next, suppose $Z$ has only nodes. If $Z$ is smooth, we are in case (1) so assume there exists a node $p$. Then $\tilde{Z}$, the normalization of $Z$ at $p$, is connected, nodal, and has arithmetic genus zero. If $\tilde{Z}$ is smooth, we are in case (2). Otherwise, $\tilde{Z}$ is a tree of $\P^{1}$s. To see that we are in case (3), it is sufficient to show that $\tilde{Z}$ is actually a chain of $\P^{1}$'s, i.e. that the only irreducible components $F \subset \tilde{Z}$ with the property that
$$
| F \cap \overline{\tilde{Z} \backslash F}|=1
$$
are the two irreducible components lying over $p$. But if $F \subset \tilde{Z}$ satisfies $| F \cap \overline{\tilde{Z} \backslash F}|=1$ and $F$ does not lie over $p$, then $F \cap \overline{\tilde{Z} \backslash F}$ is a disconnecting node of $Z$, a contradiction.

In cases (1)-(3), the isomorphism $\omega_{Z} \simeq \O_{Z}$ is clear. In case (4), we will write down a nowhere vanishing global section of $\omega_{Z}$. Let $\tilde{Z}_{1}, \ldots, \tilde{Z}_{m}$ be the connected components of $\tilde{Z}$ and $p_i \in \tilde{Z}_{i}$ the point lying over $p$. We may choose local coordinates $t_i$ at $p_i$ so that the map $\tilde{Z} \rightarrow Z$ is given by the expression $(\dag)$ in Definition \ref{D:mpoint}. Since each $\tilde{Z}_{i} \simeq \P^{1}$, the rational differential
$$
\frac{dt_1}{t_1^2} \in H^0(\tilde{Z}_i, \omega_{\tilde{Z}_i}(2p_i))
$$
gives a global section of $\omega_{\tilde{Z}_i}(2p_i)$, regular and non-vanishing away from $p_i$. The proof of Proposition \ref{P:dualizingsheaf} shows that
$$\frac{dt_1}{t_1^2}+ \ldots \frac{dt_{m-1}}{t_{m-1}^2}-\frac{dt_{m}}{t_m^2} \in H^0(\omega_{\tilde{Z}}(2p_1+ \ldots 2p_m))$$
descends to a section of $\omega_{Z}$ which generates $\omega_{Z}$ locally around $p$. Thus, it generates $\omega_{Z}$ globally.

\end{proof}

In order to define and work with the moduli problem of $m$-stable curves, it is useful to have the following terminology.

\begin{definition}[Level]\label{D:level}
Let $(C,p_1, \ldots, p_n)$ be an $n$-pointed curve of arithmetic genus one, let $Z \subset C$ be the minimal elliptic subcurve of $C$, and let $\Sigma \subset C$ denote the support of the divisor $\sum_{i}p_i$. The \emph{level} of $(C,p_1, \ldots, p_n)$ is defined to be the integer
$$|Z \cap \overline{C \backslash Z}| + |Z \cap \Sigma |.$$
\end{definition}

\begin{lemma}\label{L:level}
Suppose $(C,p_1, \ldots, p_n)$ is an $n$-pointed curve of arithmetic genus one and suppose every smooth rational component of $C$ has at least two distinguished points. Let $Z \subset C$ be the minimal elliptic subcurve, and $\Sigma \subset C$ the support of the divisor $\sum_{i}p_i$. If $E$ is any connected subcurve of arithmetic genus one, then
$$|E \cap \overline{C \backslash E}|+|E \cap \Sigma | \geq |Z \cap \overline{C \backslash Z}| + | Z \cap \Sigma|.$$
\end{lemma}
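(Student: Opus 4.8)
The plan is to reduce the inequality to the fundamental decomposition of Lemma~\ref{L:decomp} together with an elementary combinatorial count. By Corollary~\ref{C:minimality} we have $Z \subseteq E$, so I would write $C = Z \cup R_1 \cup \ldots \cup R_k$, with $R_1,\ldots,R_k$ the connected components of $\overline{C\setminus Z}$ and $q_i := Z \cap R_i$ the node attaching $R_i$ to $Z$, and then describe $E$ in terms of this decomposition. Set $S := \{\,i : E \text{ contains some irreducible component of } R_i\,\}$ and, for $i \in S$, put $R_i' := E \cap R_i$. The first thing to check is that $R_i'$ is a \emph{connected} subcurve of $R_i$ containing $q_i$: since $R_i$ meets the rest of $C$ only along the node $q_i$, every irreducible component of $R_i$ that lies in the connected curve $E$ must be joined to $Z$ within $E$ through $q_i$, hence is joined within $R_i'$ to the component of $R_i$ passing through $q_i$. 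Thus $E = Z \cup \bigcup_{i\in S} R_i'$.

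Next I would carry out the bookkeeping. Since $|Z \cap \overline{C\setminus Z}| = k$, the right-hand side equals $k + |Z \cap \Sigma|$. For the left-hand side, the set $E \cap \overline{C\setminus E}$ splits cleanly into: the nodes $q_i$ with $i \notin S$, which now separate $E$ from $R_i$ and contribute $k - |S|$; together with, for each $i \in S$, the nodes of $R_i$ joining $R_i'$ to $\overline{R_i \setminus R_i'}$, say $b_i$ of them. So $|E \cap \overline{C\setminus E}| = (k - |S|) + \sum_{i\in S} b_i$. Similarly, as $q_i$ is a node and hence unmarked, $|E \cap \Sigma| = |Z \cap \Sigma| + \sum_{i\in S} m_i$, where $m_i$ is the number of marked points lying on $R_i'$. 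Subtracting, the assertion becomes $\sum_{i\in S}(b_i + m_i) \geq |S|$, so it is enough to prove $b_i + m_i \geq 1$ for each individual $i \in S$.

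For this termwise bound I would split into two cases. If $R_i' \subsetneq R_i$, then since $R_i$ is connected there is a node of $R_i$ joining a component inside $R_i'$ to one outside it, so $b_i \geq 1$. If $R_i' = R_i$, I would invoke the hypothesis on smooth rational components: by Lemma~\ref{L:decomp}, $R_i$ is a nodal curve of arithmetic genus zero, i.e.\ a tree of $\P^1$'s, and I choose a leaf $F$ of this tree distinct from the component through $q_i$ (a tree with at least two vertices has at least two leaves, so such an $F$ exists; when $R_i$ is itself a single $\P^1$, take $F = R_i$). Because $R_i$ is a tree of $\P^1$'s meeting the rest of $C$ only at $q_i$, the component $F$ lies on exactly one node of $C$; since $F$ is a smooth rational component with at least two distinguished points, it must therefore carry a marked point, so $m_i \geq 1$. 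This yields $b_i + m_i \geq 1$ in all cases and completes the argument.

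I expect the only real obstacle to be the combinatorial bookkeeping of the second paragraph — in particular the fact, needed to make that bookkeeping valid, that each $E \cap R_i$ is connected and contains $q_i$, so that boundary points and marked points of $E$ partition cleanly over $Z$ and the pieces $R_i'$. Once this is secured, the degenerate case where $R_i$ is irreducible (the only situation forcing the chosen leaf to pass through $q_i$) is dispatched as above, and no input beyond Lemma~\ref{L:decomp} and Corollary~\ref{C:minimality} is required.
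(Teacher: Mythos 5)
Your proof is correct and follows essentially the same route as the paper: both rest on the fundamental decomposition $C = Z \cup R_1 \cup \ldots \cup R_k$ with $Z \subseteq E$ (Corollary \ref{C:minimality}), trading each piece $R_i$ either for a marked point (when $E$ contains it, via the hypothesis on smooth rational components) or for a boundary node (when it does not). The paper's version is terser—it only records the two inequalities $|E \cap \Sigma| \geq |Z \cap \Sigma| + j$ and $|E \cap \overline{C \setminus E}| \geq |Z \cap \overline{C \setminus Z}| - j$—while you make explicit the leaf-of-the-tree argument and the connectivity of $E \cap R_i$, but the underlying argument is the same.
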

\begin{proof}
Let $C=Z \cup R_1 \cup \ldots \cup R_k$ be the fundamental decomposition of $C$, and order the $R_{i}$ so that $E$ contains $R_1, \ldots, R_j$, but does not contain $R_{j+1}, \ldots, R_{k}$. The assumption that each smooth rational component has at least two distinguished points implies that each of $R_{1}, \ldots, R_{j}$ contains at least one marked point so
$$
|E \cap \Sigma | \geq |Z \cap \Sigma|+j.
$$
On the other hand, since $E$ does not contain $R_{j+1}, \ldots, R_{k}$, we must have
$$|E \cap \overline{C \backslash E}| \geq |Z \cap \overline{C \backslash Z}|-j.$$
Thus, 
$$|E \cap \overline{C \backslash E}|+|E \cap \Sigma | \geq |Z \cap \overline{C \backslash Z}| + |Z \cap \Sigma|.$$
\end{proof}
\begin{corollary}\label{C:level}
Let $(C,p_1, \ldots, p_n)$ be an $n$-pointed curve of arithmetic genus one, and suppose that every smooth rational component has at least two distinguished points. Then $(C,p_1, \ldots, p_n)$ has level $>m$ iff
$$|E \cap \overline{C \backslash E}| + |E \cap \Sigma|>m$$
for every connected arithmetic genus one subcurve $E \subset C$.
\end{corollary}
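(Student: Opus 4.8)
The plan is to deduce Corollary~\ref{C:level} formally from Lemma~\ref{L:level} and Definition~\ref{D:level}, with essentially no new input. The key preliminary observation is that, by Lemma~\ref{L:decomp}, the minimal elliptic subcurve $Z \subset C$ is itself a connected subcurve of arithmetic genus one; hence it is one of the subcurves $E$ over which the corollary quantifies, and by Definition~\ref{D:level} the level of $(C,p_1,\ldots,p_n)$ is precisely $|Z \cap \overline{C \backslash Z}| + |Z \cap \Sigma|$.

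For the implication that level $>m$ forces $|E \cap \overline{C \backslash E}| + |E \cap \Sigma| > m$ for every connected arithmetic genus one subcurve $E \subset C$, I would fix such an $E$ and apply Lemma~\ref{L:level} directly — noting that the hypothesis that every smooth rational component has at least two distinguished points is shared by the corollary — to obtain $|E \cap \overline{C \backslash E}| + |E \cap \Sigma| \geq |Z \cap \overline{C \backslash Z}| + |Z \cap \Sigma|$, whose right-hand side is the level and hence exceeds $m$ by assumption. For the reverse implication, I would simply specialize the assumed inequality to the admissible choice $E = Z$, which gives at once that the level $|Z \cap \overline{C \backslash Z}| + |Z \cap \Sigma|$ exceeds $m$.

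There is no genuine obstacle here; the corollary merely repackages Lemma~\ref{L:level} by identifying $Z$ as the subcurve on which the quantity $|E \cap \overline{C \backslash E}| + |E \cap \Sigma|$ attains its minimum. The one point that deserves a moment's attention — and which is already settled upstream — is that $Z$ really is admissible as a test curve, i.e. that it is connected of arithmetic genus one; this is exactly the content of the Fundamental Decomposition Lemma~\ref{L:decomp}.
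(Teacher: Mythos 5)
Your proof is correct and is exactly the argument the paper intends: the corollary follows immediately from Lemma \ref{L:level} (giving that the level is the minimum of $|E \cap \overline{C \backslash E}| + |E \cap \Sigma|$ over admissible $E$) together with the observation that $Z$ itself is a connected arithmetic genus one subcurve, so the paper offers no separate proof. Nothing further is needed.
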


\subsection{Definition of the moduli problem}\label{S:mstable}
We are ready to define the moduli problem of $(m, \A)$-stable curves.
\begin{definition}[$(m, \A)$-stability]\label{D:mstable}
Fix positive integers $m<n$, and a vector of rational weights $\A=(a_1, \ldots, a_n) \in (0,1]^{n}$. Let $(C,p_1, \ldots, p_n)$ be an $n$-pointed curve of arithmetic genus one, and let $\Sigma \subset C$ denote the support of the divisor $\sum_{i}p_i$. We say that $C$ is \emph{$(m, \A)$-stable} if
\begin{itemize}
\item[(1)] The singularities of $C$ are nodes or elliptic $l$-fold points, $l \leq m$.
\item[(2)] The level of $(C,p_1, \ldots, p_n)$ is $> m$. Equivalently, by Corollary \ref{C:level},
$$|E \cap \overline{C \backslash E}| + |E \cap \Sigma |>m$$
for every connected arithmetic genus one subcurve $E \subset C$.
\item[(3)] $H^0(C,\Omega_C^{\vee}(-\Sigma))=0.$ Equivalently, by Corollary \ref{C:automorphisms}, 
\begin{itemize}
\item[(a)] If $C$ is nodal, then every rational component of $\tilde{C}$ has at least three distinguished points.
\item[(b)] If $C$ has a (unique) elliptic $m$-fold point $p$, and $\tilde{B}_1, \ldots, \tilde{B}_{m}$ denote the components of the normalization whose images contain $p$, then
\begin{itemize}
\item[(b1)] $\tilde{B}_1, \ldots, \tilde{B}_m$ each have $\geq 2$ distinguished points.
\item[(b2)] At least one of $\tilde{B}_1, \ldots, \tilde{B}_m$ has $\geq 3$ distinguished points.
\item[(b3)] Every other component of $\tilde{C}$ has $\geq 3$ distinguished points.
\end{itemize}
\end{itemize}
\item[(4)] If $p_{i_1}=\ldots=p_{i_k} \in C$ coincide, then $\sum_{j=1}^{k}a_{i_j} \leq 1.$ 
\item[(5)] $\omega_{C}(\Sigma_ia_ip_i)$ is an ample $\Q$-divisor.
\end{itemize}
\end{definition}
\begin{remark}
When $\A=(1, \ldots, 1)$, then we say simply that $(C, p_1, \ldots, p_n)$ is $m$-stable. In this case, condition (4) merely asserts that the marked points are distinct, and condition (5) follows automatically from condition (3). Indeed, conditions (b1) and (b3) above, combined with Proposition \ref{P:dualizingsheaf}, imply that $\omega_{C}(\Sigma_{i}p_i)$ has positive degree on every component of $C$.
\end{remark}

The definition of an $(m, \A)$-stable curve extends to a moduli functor in the usual way.
If $S$ is an arbitrary scheme over $\Spec \mathbb{Z}[1/6]$, an \emph{$(m, \A)$-stable curve over $S$} consists of a morphism of schemes $\pi:X \rightarrow S$, together with $n$ sections $\sigma_1, \ldots, \sigma_n$, such that
\begin{itemize}
\item[(1)] $\pi$ is flat, projective, and locally of finite-presentation,
\item[(2)] The images of $\sigma_1, \ldots \sigma_n$ lie in the smooth locus of $\pi$,
\item[(3)] For any point $s \in S$, the geometric fiber $(X_{\overline{s}}, \sigma_1(\overline{s}),  \ldots ,  \sigma_n(\overline{s}))$ is an $(m, \A)$-stable curve over $\overline{k(s)}$.
\end{itemize}
A morphism of $(m, \A)$-stable curves, from $(X/S, \sigma_1, \ldots, \sigma_n)$ to $(Y/T, \tau_1, \ldots, \tau_n)$, is a commutative diagram
\[
\xymatrix{
X \ar[d] \ar[r]^{\phi} & Y \ar[d] \\
S  \ar@/^1pc/[u]^{\{\sigma_i\}}  \ar[r] & T \ar@/_1pc/[u]_{\{\tau_i\}} 
}
\] 
such that the induced map $X \rightarrow Y \times_{T} S$ is an isomorphism, and $\phi \circ\sigma_i = \tau_i$ for $i=1, \ldots, m$. The assignment
$$
(X/S, \sigma_1, \ldots, \sigma_n) \rightarrow \omega_{X/S}(\Sigma_{i}a_i\sigma_i) \in \Pic(X) \otimes \Q
$$
gives a canonical $\Q$-polarization for our moduli problem, so
$(m, \A)$-stable curves (and morphisms of $(m, \A)$-stable curves) satisfy \'{e}tale descent, i.e. they form a stack $\SM_{1,\A}(m)$. The main theorem of this paper is
\begin{theorem}\label{T:MainResult1}
$\SM_{1,\A}(m)$ is a proper irreducible Deligne-Mumford stack over $\Spec \mathbb{Z}[1/6].$
\end{theorem}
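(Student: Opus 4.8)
The plan is to establish, in turn, that $\SM_{1,\A}(m)$ is an algebraic stack of finite type over $\Spec\mathbb{Z}[1/6]$, that it is Deligne--Mumford, that it is proper, and that it is irreducible.

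\emph{Algebraicity and the Deligne--Mumford property.} Since $\omega_C(\sum_ia_ip_i)$ is ample on every $(m,\A)$-stable curve (condition~(5)) and the family of $(m,\A)$-stable curves is bounded (Section~\ref{S:mstable}), there is a positive integer $N$ such that $\L:=\omega_C(\sum_ia_ip_i)^{\otimes N}$ is very ample with vanishing higher cohomology and fixed Hilbert polynomial $P$ for every $(m,\A)$-stable curve; embed each such curve in a projective space $\P^{M}$, $M+1=h^0(C,\L)$, via $\L$. Because $(m,\A)$-stability is deformation-open (Section~\ref{S:mstable}), the $n$-pointed curves so embedded are parametrized by a finite-type, locally closed subscheme $H$ of $\mathrm{Hilb}_P(\P^M)\times(\P^M)^{n}$, on which $\mathrm{PGL}_{M+1}$ acts with quotient stack $[H/\mathrm{PGL}_{M+1}]\simeq\SM_{1,\A}(m)$; as $\mathrm{PGL}_{M+1}$ is smooth and affine this is an algebraic stack, of finite type by boundedness. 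For the Deligne--Mumford property it suffices that the diagonal be unramified, i.e. that no $(m,\A)$-stable curve carry a nonzero infinitesimal automorphism fixing the marked points; such automorphisms form $H^0(C,\Omega_C^{\vee}(-\Sigma))$, which vanishes by condition~(3). It is precisely here that the base $\Spec\mathbb{Z}[1/6]$ is needed, since the reformulation of condition~(3) via distinguished points (Corollary~\ref{C:automorphisms}) requires $\characteristic k\neq 2,3$ because of the cuspidal pathology in Example~\ref{E:automorphisms}. Since a finite-type group scheme over a field with trivial Lie algebra is finite and \'etale, all automorphism groups are finite and $\SM_{1,\A}(m)$ is Deligne--Mumford.

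\emph{Properness.} By finiteness it is enough to verify the valuative criterion over a discrete valuation ring $R$; write $\Delta=\Spec R$. For \emph{separatedness}, let $\C_1/\Delta$ and $\C_2/\Delta$ be $(m,\A)$-stable families with isomorphic generic fibers. After a finite base change, resolving a common dominating model produces a nodal curve $\C^{ss}/\Delta$ with regular total space and proper birational maps $\phi_i\colon\C^{ss}\to\C_i$. The exceptional curve of $\phi_i$ is a semistable tail of the elliptic $l_i$-fold point of $C_i$ (or lies over a node), hence \emph{balanced} by Proposition~\ref{P:semistablelimits}. Balancedness, tested against the fixed model $\C^{ss}$, pins down which subcurve must be contracted: it is forced by the requirement $\phi_i^{*}\omega_{\C_i/\Delta}\simeq\omega_{\C^{ss}/\Delta}(D_i)$ with $D_i$ supported on $\Exc(\phi_i)$ and trivial there. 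Thus $\Exc(\phi_1)=\Exc(\phi_2)$, and since each $\C_i$ is recovered from $\C^{ss}$ as $\Proj$ of the section ring of a canonically determined line bundle (Lemma~\ref{L:Contraction}), $\C_1\simeq\C_2$ over $\Delta$. This gives separatedness for $\A=(1,\dots,1)$; the weighted case follows by combining it with the reduction of Section~\ref{S:mAstability}, where the $(m,\A)$-stable model is obtained from the $m$-stable model by a relative minimal model program for $\omega_{\C/\Delta}(\sum_ia_ip_i)$, an operation with unique output. For \emph{existence of limits}, one reduces to a Deligne--Mumford semistable model over a finite base change and then modifies the special fiber by an explicit finite sequence of blow-ups and contractions (Section~\ref{S:ValuativeCriterion}): blow up distinguished points on over-small elliptic subcurves, base-change to clear the multiplicities of the exceptional divisors, and contract the minimal elliptic subcurve (Lemma~\ref{L:decomp}) by a line bundle $\omega_{\C/\Delta}(D)$, which by Lemma~\ref{L:Contraction} replaces it by an elliptic $l$-fold point and keeps the family flat with Gorenstein fibers; iterating until the level exceeds $m$ and then running the weighted program of Section~\ref{S:mAstability} produces an $(m,\A)$-stable limit, and conditions~(1)--(5) are checked directly.

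\emph{Irreducibility.} The locus $\mathcal{M}_{1,n}$ of smooth $n$-pointed genus-one curves with distinct marked points is an open substack of $\SM_{1,\A}(m)$, and it is irreducible (via the morphism $\mathcal{M}_{1,n}\to\mathcal{M}_{1,1}$, whose fibers are irreducible). Every $(m,\A)$-stable curve has only nodes and elliptic $l$-fold points as singularities (Appendix~A), and every such curve is smoothable by a global argument using Lemma~\ref{L:Contraction}; hence $\mathcal{M}_{1,n}$ is dense in $\SM_{1,\A}(m)$. A finite-type algebraic stack with a dense irreducible open substack is irreducible, so $\SM_{1,\A}(m)$ is irreducible. \emph{The main obstacle} is separatedness: the comparison of competing semistable models is delicate and goes through only because semistable tails of the elliptic $m$-fold point are balanced (Proposition~\ref{P:semistablelimits}); the explicit construction of limits is the second most substantial step, while algebraicity, the Deligne--Mumford property, and irreducibility are comparatively formal given the local results of Section~2.
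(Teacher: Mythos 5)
Your overall architecture coincides with the paper's: boundedness and deformation-openness give a finite-type presentation via the Hilbert scheme, condition (3) gives unramified automorphism group schemes over $\mathbb{Z}[1/6]$, the valuative criterion is checked by semistable reduction plus blow-up/contraction for existence and by comparing two limits through a common semistable model for uniqueness, and irreducibility follows from smoothability of the elliptic $l$-fold point. However, the uniqueness (separatedness) step as you state it has a genuine gap: balancedness does \emph{not} pin down which subcurve of $C^{ss}$ is contracted. Proposition \ref{P:semistablelimits} says precisely that \emph{every} balanced pointed curve arises as a semistable tail, so on a fixed $C^{ss}$ with minimal elliptic subcurve $Z$ there is a whole chain of admissible exceptional curves, one for each common distance $l$ from $Z$ to the attaching points; the relation $\phi_i^*\omega_{\C_i/\Delta}\simeq \omega_{\C^{ss}/\Delta}(D_i)$ with $D_i$ supported on $\Exc(\phi_i)$ and trivial on it only yields this balancedness constraint and cannot by itself force $\Exc(\phi_1)=\Exc(\phi_2)$. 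What closes the argument is the $m$-stability of \emph{both} special fibers: setting $E_i:=\phi_i^{-1}(p_i)$, both contain $Z$ (Corollary \ref{C:minimality}) and both are balanced, so if the two distances differ one has a strict containment, say $E_1\subsetneq E_2$; then $\phi_1(E_2)\subset C_1$ is an unmarked connected arithmetic genus one subcurve meeting the rest of $C_1$ in at most $m$ points (since the elliptic point of $C_2$ has at most $m$ branches), violating condition (2) of Definition \ref{D:mstable} for $C_1$. A variant of the same argument is needed when one of the two limits is nodal, and after $E_1=E_2$ one must still match the remaining exceptional components (the unmarked two-distinguished-point components away from $E_1$), which you omit. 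Without these steps the conclusion $\C_1\simeq\C_2$ does not follow.

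A minor further point: in the existence step no intermediate base change is needed to ``clear multiplicities'' of exceptional divisors --- the blow-ups are performed at smooth (marked) points of the special fiber, so the fiber remains reduced; the only base change occurs at the initial semistable reduction. Otherwise your outline follows the paper's proof.
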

We will prove that the moduli problem of $(m, \A)$-stable curves is bounded and deformation-open in Lemmas \ref{L:bounded} and \ref{L:defopen}, and we verify the valuative criterion in Sections \ref{S:ValuativeCriterion} and \ref{S:mAstability}. Everything else follows by standard arguments which we outline below.
\begin{proof}
To say that $\SM_{1,\A}(m)$ is an algebraic stack of finite-type over $\Spec \mathbb{Z}[1/6]$ means  \cite{LMB}:
\begin{itemize}
\item[(1)] The diagonal $\Delta: \SM_{1,\A}(m) \rightarrow \SM_{1,\A}(m) \times \SM_{1,\A}(m)$ is representable, quasicompact, and of finite-type.
\item[(2)] There exists an irreducible scheme $U$, of finite-type over $\Spec \mathbb{Z}[1/6]$, with a smooth, surjective morphism $U \rightarrow \SM_{1,\A}(m).$
\end{itemize}
Since $m$-stable curves are canonically polarized, the Isom-functor for any pair of $m$-stable curves over $S$ is representable by a quasiprojective scheme over $S$, which gives (1).

For (2), fix an integer $N>n+\max\{2m,4\}$ as in the boundedness statement of Lemma \ref{L:bounded}, and assume that $N$ is sufficiently divisible so that each $Na_i \in \mathbb{Z}$. Set
\begin{align*}
d&=N(\Sigma_{i}a_i),\\
r&=N(\Sigma_{i}a_i)-1.
\end{align*}
If $(C,p_1, \ldots, p_n)$ is any $m$-stable curve, Riemann-Roch implies
\begin{align*}
d&=\deg \omega_C(\Sigma_{i}p_i)^{\otimes N},\\
r&=\dim H^0(C, \omega_C(\Sigma_{i}p_i)^{\otimes N})-1,
\end{align*}
so Lemma \ref{L:bounded} implies that every $N$-canonically polarized $m$-stable curve appears in the Hilbert scheme of curves of degree $d$ and arithmetic genus one in $\P^r:=\P^{r}_{\mathbb{Z}[1/6]}$. Let $\H$ denote this Hilbert scheme and consider the locally-closed subscheme
\begin{align*}
Z&=\{(C,p_1, \ldots, p_n) \subset \H \times (\P^r)^n \,| \,\, p_1, \ldots, p_n \text{ are smooth points of } C \}.
\intertext{By Lemma \ref{L:defopen}, there exists an open subscheme of $Z$ defined by}
V&=\{(C,p_1, \ldots, p_n) \subset \H \times (\P^r)^n \,| \,\, (C,p_1, \ldots, p_n) \text{ is $m$-stable} \}.
\end{align*}

Using the representability of the Picard scheme \cite[Ch. 5]{GIT}, there exists a locally-closed subscheme $U \subset V$, such that
$$
U=\{ (C,p_1, \ldots, p_n) \subset V \, |\,\, \omega_C(\Sigma_i a_ip_i)^{\otimes N} \simeq \O_C(1) \}.
$$
Now the classifying map $U \rightarrow \SM_{1,n}(m)$ is smooth and surjective.

To show that $\SM_{1,\A}(m)$ is Deligne-Mumford over $\Spec \mathbb{Z}[1/6]$, it suffices to show that if $k$ is an algebraically closed field and $\characteristic k \neq 2,3$, then the group scheme $\Aut_k(C,p_1, \ldots, p_n)$ is unramified over $k$. There is a natural identification of $k[\epsilon]/(\epsilon^2)$-points of $\Aut_k(C,p_1, \ldots, p_n)$ with global sections of $\Omega^{\vee}_{C}(-\Sigma)$ \cite[3.3]{Hassett4}, so this follows from condition (3) in the definition of an $(m, \A)$-stable curve.

Finally, to show that $\SM_{1,\A}(m)$ is irreducible, it is sufficient to show that $\mathcal{M}_{1,n} \subset \SM_{1,\A}(m)$ is dense, i.e. that every $m$-stable curve is smoothable. Since a curve is smoothable iff each of its singularities is smoothable \cite[II.6.3]{Kollar2}, and the only singularities on an $m$-stable curve are elliptic $l$-fold points and nodes, it suffices to see that the elliptic $l$-fold point is smoothable. This is an old result going back to Pinkham \cite{Pinkham}, but we may also note that we have constructed explicit smoothings of the elliptic $l$-fold point in our proof of Proposition \ref{P:semistablelimits}.
\end{proof}

\begin{lemma}[Boundedness]\label{L:bounded}
If $(C,p_1, \ldots, p_n)$ is any $(m, \A)$-stable curve, then the line-bundle 
$$L^N:=\omega_{C}(\Sigma_ia_ip_i)^{\otimes N}$$
is very ample on $C$ for any $N>n+\max\{2m,4\}$ and sufficiently divisible.
\end{lemma}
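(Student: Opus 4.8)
The standard strategy for very ampleness of a line bundle $L^N$ of large degree on a (possibly singular) curve is to verify the two Serre-type criteria: (i) $H^1(C, L^N(-x-y)) = 0$ for all pairs of points $x,y$ (including infinitely near pairs), which guarantees that global sections of $L^N$ separate points and tangent vectors; and (ii) $L^N$ is globally generated. Since $L = \omega_C(\Sigma_i a_i p_i)$ has positive degree on every component of $C$ (this is part (5) of $(m,\A)$-stability, and in the $m$-stable case follows from Proposition \ref{P:dualizingsheaf} as noted in the Remark), ampleness of $L$ is automatic, so the only real issue is making $N$ explicit.

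The key input is the following vanishing statement, which I would isolate first: if $\F$ is a line bundle on a connected reduced Gorenstein curve $C$ of arithmetic genus one such that $\deg \F|_{F} \geq 1$ for every irreducible component $F \subset C$ \emph{and} $\deg \F|_{Z} \geq 1$ where $Z$ is the minimal elliptic subcurve of Lemma \ref{L:decomp}, then $H^1(C, \F) = 0$. The proof is by Serre duality together with the fundamental decomposition $C = Z \cup R_1 \cup \ldots \cup R_k$: one has $H^1(C,\F) \cong \hom(\F, \omega_C)^{\vee}$, and a nonzero map $\F \to \omega_C \cong \O_C$ (using $\omega_C \simeq \O_C$ on the relevant genus-one pieces, cf.\ Lemma \ref{L:MinEllipticSub}) would have to be an inclusion of $\F$ into $\O_C$ of a line bundle of nonnegative total degree, which forces $\deg \F|_F \leq 0$ on some component — contradiction once the bound $\geq 1$ holds on every component including $Z$. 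Alternatively one argues directly by dévissage: restrict to $Z$, where $H^1(Z, \F|_Z) = 0$ because $\deg \F|_Z \geq 1 > 0 = \deg \omega_Z$ and $Z$ has arithmetic genus one; then peel off the trees of $\P^1$'s $R_i$ one tail at a time using the exact sequences $0 \to \F|_{R_i}(-q_i) \to \F \to \F|_{\overline{C\setminus R_i}} \to 0$, noting $H^1$ vanishes on a chain/tree of $\P^1$'s for a bundle of nonnegative degree on each component that is positive on the component meeting the rest.

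Granting this, the argument runs as follows. \textbf{Step 1:} Estimate the degree of $L^N$ on each component and on $Z$. On a component $F$ of genus zero with $\delta_F$ distinguished points, $\deg \omega_C|_F = \delta_F - 2$ by adjunction (counting marked points and the local contributions of the elliptic $m$-fold point as in Proposition \ref{P:dualizingsheaf}, where a branch at the $m$-fold point contributes $2$), so $\deg L|_F \geq 1$ except possibly when $F$ is one of the at most two types of "light" components ($\delta_F = 2$, or a branch $\tilde B_i$ at the elliptic point with only two distinguished points); on those $\deg L|_F$ is still $\geq 0$, and on $Z$ itself $\deg L|_Z = \deg(\Sigma \cap Z)$-type contributions give $\deg L|_Z \geq m - (\text{something})$, which combined with condition (2) (level $> m$) forces $\deg L|_Z \geq 1$. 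A careful bookkeeping — the point of the hypothesis $N > n + \max\{2m,4\}$ — shows $\deg L^N(-x-y)|_F \geq 1$ on every component and $\geq 1$ on $Z$, for any two points $x,y$: the worst case is a "light" component containing both $x$ and $y$, where one needs $N\cdot 0 + (\text{correction}) \geq 2$, forcing $N$ large relative to $2m$ and $4$. \textbf{Step 2:} Apply the vanishing lemma with $\F = L^N(-x-y)$ to conclude $H^1(C, L^N(-x-y)) = 0$ for all $x,y$, hence $L^N$ separates points and tangent vectors; similarly $H^1(C, L^N(-x)) = 0$ gives global generation. Therefore $L^N$ is very ample.

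\textbf{Main obstacle.} The delicate part is Step 1: showing that even on the "light" components — the components where $\omega_C(\Sigma a_i p_i)$ has degree $0$ or very close to it, which are exactly the genus-zero components with only two distinguished points and the branches $\tilde B_i$ at an elliptic $m$-fold point — the twisted bundle $L^N(-x-y)$ still has the degree needed for the vanishing lemma, and that the global degree on the minimal elliptic subcurve $Z$ stays $\geq 1$ after removing two points. This is where condition (2) ($|E \cap \overline{C\setminus E}| + |E\cap \Sigma| > m$, equivalently the level is $>m$) is used crucially, and where the specific numerical threshold $n + \max\{2m,4\}$ comes from: one must simultaneously dominate the $n$ possible weighted marked points (each contributing as little as an $\varepsilon$ to $\deg L$ but a full unit to the twist count) and the $2m$-type contribution of the branches at the elliptic point, plus the universal $4$ handling low-genus/few-component degenerate configurations. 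I would organize this as a short case analysis on the type of component containing $x$ and $y$ (or the infinitely-near pair), checking the degree inequality in each case.
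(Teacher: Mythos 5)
There is a genuine gap, and it sits precisely at the point your sketch glosses over: on an $(m,\A)$-stable curve the points $x,y$ at which you must separate sections can be singular points of $C$ (nodes or elliptic $l$-fold points), and there the sheaf you call $L^N(-x-y)$ does not exist as a line bundle -- the ideal sheaves $I_x, I_y$ are not invertible, and "infinitely near pairs" in the smooth-curve sense does not capture separation of tangent vectors at a point whose Zariski tangent space has dimension up to $m-1$. The correct criterion is surjectivity of $H^0(C,L^N)\rightarrow H^0(C,L^N\otimes \O_C/I_xI_y)$, i.e. vanishing of $H^1(C,L^N\otimes I_xI_y)$, and your vanishing lemma (stated for line bundles, proved by restriction to the minimal elliptic subcurve and peeling off trees) does not apply to the rank-one torsion-free sheaf $L^N\otimes I_xI_y$. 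This is exactly the difficulty the paper's proof is built to handle: after Serre duality it compares $I_xI_y$ with the pushforward of the Cartier divisor $-D$, $D=\sum_i 2p_i+\sum_j 2q_j$, from the normalization of $C$ at $x$ and $y$, using Lemma \ref{L:mpoint}(3) (functions vanishing to order two on all branches of an elliptic $l$-fold point descend); this yields $\pi_*\O_{\tilde{C}}(-D)\subset I_xI_y$ with torsion quotient, hence an injection of the Serre dual into $H^0(\tilde{C},\O_{\tilde{C}}(D)\otimes\pi^*(\omega_C\otimes L^{-N}))$, and the whole problem becomes a componentwise negative-degree statement on the partial normalization. Without some substitute for this step (e.g. an explicit local analysis of $I_xI_y$ at the elliptic $l$-fold point), your Step 2 does not establish very ampleness, only that $L^N$ separates points and tangent directions along the smooth locus.

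Secondary, but worth noting: your Step 1 is not carried out, and the source of the threshold $N>n+\max\{2m,4\}$ is misidentified. The level condition (2) plays no role in the paper's boundedness argument; what is actually used is that $\deg D\leq\max\{4,2m\}$ (a singular point has at most $\max\{2,m\}$ branches), that $\pi^*L$ has positive degree on every component (ampleness, condition (5)), and that $\deg_F\pi^*\omega_C\leq n$ on every component $F$ of $\tilde{C}$, which follows simply because $\pi^*\omega_C(\Sigma_ip_i)$ has total degree $n$ and non-negative degree on each component. If you redo your bookkeeping through the partial normalization as above, the case analysis over "light" components disappears and the numerical bound falls out in one line.
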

\begin{proof}
Throughout this argument, we will assume that $N$ is chosen sufficiently divisible so that $\omega_{C}(\Sigma_ia_ip_i)^{\otimes N}$ is integral. With this caveat, it is enough to show that $N>n+\max\{2m,4\}$ implies
\begin{itemize}
\item[(1)] $H^1(C, L^N \otimes I_{p})=0$ for any point $p \in C$, 
\item[(2)] $H^1(C, L^N \otimes I_{p}I_{q})=0$ for any pair of points $p,q \in C$.
\end{itemize}
Condition (1) says that the complete linear series $H^0(C,L^N)$ is basepoint-free, while condition (2) says that it separates points ($p \neq q$) and tangent vectors ($p=q$). Clearly $(2) \implies (1)$. Using Serre duality, it is enough to show that
$$
H^0(C, \omega_{C} \otimes L^{-N} \otimes (I_pI_q)^{\vee})=0.
$$
Let $\pi: \tilde{C} \rightarrow C$ be the normalization of $C$ at $p$ and $q$, with $p_1, \ldots p_k$ the points of $\tilde{C}$ lying above $p$, and $q_1, \ldots, q_l$ the points lying above $q$. Define
$$
D:=\sum_{i=1}^{m}2p_i+\sum_{j=1}^{l}2q_i
$$
as a Cartier divisor on $\tilde{C}$, and note that $\deg D \leq \max\{4,2m\}$ (since any singular point of $C$ has at most $\max\{2,m\}$ branches). By Lemma \ref{L:mpoint},
\begin{align*}
 \pi_{*}\O_{\tilde{C}}(-D) \subset I_{p}I_{q},
\end{align*}
and the quotient is torsion, supported at $\{p\} \cup \{q\}$. Thus, we obtain injections
$$
\Hom(I_pI_q, \O_{C}) \hookrightarrow \Hom(\pi_{*}\O_{\tilde{C}}(-D) , \O_{C}) \hookrightarrow \pi_*\Hom(\O_{\tilde{C}}(-D) , \O_{\tilde{C}}). 
$$
Tensoring by $\omega_{C} \otimes L^{-N}$, we obtain
$$
(I_pI_q)^{\vee} \otimes (\omega_{C} \otimes L^{-N}) \hookrightarrow \pi_*\O_{\tilde{C}}(D) \otimes (\omega_{C} \otimes L^{-N}),
$$
so that
$$
H^0(\tilde{C}, \O_{\tilde{C}}(D) \otimes \pi^*(\omega_{C} \otimes L^{-N}))=0 \implies H^0(C, \omega_{C} \otimes L^{-N} \otimes (I_pI_q)^{\vee})=0.
$$
We claim that $N>n+\max\{4,2m\}$ forces the line-bundle $\O_{\tilde{C}}(D) \otimes \pi^*(\omega_{C} \otimes L^{-N})$ to have negative degree on each component (and hence no sections).
Since $\pi^*L$ has degree at least one on every component of $\tilde{C},$ and $\deg D \leq \max\{4,2m\}$, it is enough to show that $\pi^*\omega_{C}$ has degree at most $n$ on any irreducible component $ F \subset \tilde{C}.$ To see this, simply observe
$$
\deg_{F}\pi^*\omega_{C} \leq \deg_{F}\pi^*\omega_{C}(\Sigma_i p_i) \leq n,
$$
where the last inequality follows from the fact that $\pi^*\omega_{C}(\Sigma_i p_i)$ has total degree $n$ and non-negative degree on each component.
\end{proof}

\begin{lemma}[Deformation-Openness]\label{L:defopen} Let $S$ be a noetherian scheme and let $(\phi: \C \rightarrow S, \sigma_1, \ldots, \sigma_n)$ be a flat, projective morphism of relative dimension one with $n$ sections $\sigma_1, \ldots, \sigma_n$. The set
$$
T=\{s \in S | (C_{\overline{s}}, \sigma_1(\overline{s}),  \ldots ,  \sigma_n(\overline{s})) \text{ is $m$-stable}\}
$$
is Zariski-open in $S$.
\end{lemma}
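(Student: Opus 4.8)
The plan is to verify separately that each of conditions (1)--(3) of Definition~\ref{D:mstable} cuts out an open subset of $S$; for $m$-stability (the case $\A=(1,\ldots,1)$) this suffices, since condition~(4) merely says the sections $\sigma_i$ are pairwise disjoint --- plainly an open condition --- and condition~(5) follows from (3) by Proposition~\ref{P:dualizingsheaf}. First one shrinks $S$: for the flat projective morphism $\C\to S$ the set of $s$ over which the geometric fibre is reduced, connected and of arithmetic genus one is open (geometric reducedness and geometric connectedness of fibres are open conditions, and $\chi(\O_{C_{\overline s}})$ is locally constant), and over such $s$ the fibre is automatically a curve. So we may assume every geometric fibre of $\C\to S$ is a reduced connected curve of arithmetic genus one.

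\emph{Condition (1) is open.} The locus in $\C$ where the coherent sheaf $\omega_{\C/S}$ (which is flat over $S$, with formation commuting with base change) fails to be invertible is closed, and it meets each fibre $C_s$ in the non-Gorenstein locus of $C_s$; its image under the proper morphism $\C\to S$ is therefore the closed locus of $s$ with non-Gorenstein fibre. Over the open complement every fibre is a reduced connected Gorenstein curve of arithmetic genus one, hence has only nodes and elliptic $l$-fold points as singularities by the classification of Appendix~A, and it remains only to see that the condition $l\le m$ is open. Given $s_0$ in the locus where (1) holds: near a node of $C_{s_0}$ the family is \'etale-locally of the form $V(xy-f)$, so nearby fibres acquire only nodes there, while near an elliptic $l$-fold point ($l\le m$) the family is a deformation of that singularity, every fibre of which --- again by Appendix~A --- has only nodes and elliptic $l'$-fold points with $l'\le l\le m$. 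Hence a neighbourhood of $s_0$ lies in the locus where (1) holds.

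\emph{Condition (3) is open.} Since $\C\to S$ is projective, the automorphism functor $\Isom_S\big((\C,\sigma_1,\ldots,\sigma_n),(\C,\sigma_1,\ldots,\sigma_n)\big)$ is represented by a scheme $\mathcal A$ that is separated and locally of finite type over $S$ (realize automorphisms as graphs in a relative Hilbert scheme of $\C\times_S\C$); let $e\colon S\to\mathcal A$ be the identity section. The non-unramified locus of $\mathcal A\to S$ is closed in $\mathcal A$, so its preimage under $e$ is closed in $S$, and the complement is the set of $s$ at which $\mathcal A\to S$ is unramified at $e(s)$. Over a geometric point $\overline s$ this says exactly that the tangent space at the identity of $\Aut\big(C_{\overline s},\sigma_1(\overline s),\ldots,\sigma_n(\overline s)\big)$ is zero; under the identification of this tangent space with $H^0\big(C_{\overline s},\Omega^{\vee}_{C_{\overline s}}(-\Sigma)\big)$ \cite[3.3]{Hassett4}, that is precisely condition~(3).

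\emph{Condition (2): the main obstacle.} Restrict to the open locus on which (1) and (3) hold; there every fibre is a Gorenstein genus-one curve all of whose smooth rational components carry at least two distinguished points, so Corollary~\ref{C:level} identifies condition~(2) for the fibre over $\overline s$ with the inequality $|E\cap\overline{C_{\overline s}\backslash E}|+|E\cap\Sigma|>m$ for every connected genus-one subcurve $E\subseteq C_{\overline s}$; by Lemma~\ref{L:level} it suffices to test $E=Z$, the minimal elliptic subcurve, and by adjunction together with $\omega_Z\simeq\O_Z$ (Lemma~\ref{L:MinEllipticSub}) the left-hand side equals $\deg_Z\omega_{C_{\overline s}}(\Sigma)$. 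As $T$ is constructible, it is enough to show $T$ is stable under generization; by the standard reduction one takes a trait $\Delta=\Spec R\to S$ whose special fibre $C_0=\C_0$ is $m$-stable and must show the geometric generic fibre $C_{\overline\eta}$ is $m$-stable. Conditions (1) and (3) for $C_{\overline\eta}$ follow from the openness statements already proved, so $C_{\overline\eta}$ is Gorenstein and $\omega_{\C/\Delta}(\Sigma)$ is a line bundle. Let $\mathcal Z\subseteq\C$ be the scheme-theoretic closure of $Z_{\overline\eta}\subseteq C_{\overline\eta}$ (defined over a finite extension of $K$, which is harmless): then $\mathcal Z$ is reduced and flat over $\Delta$, so the Euler characteristics of $\O_{\mathcal Z_s}$ and of $\omega_{\C/\Delta}(\Sigma)|_{\mathcal Z_s}$ are constant in $s$, whence $\deg_{Z_{\overline\eta}}\omega_{C_{\overline\eta}}(\Sigma)=\deg_{\mathcal Z_0}\omega_{C_0}(\Sigma)$. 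Now $\mathcal Z_0\subseteq C_0$ is connected and generically reduced with $\chi(\O_{\mathcal Z_0})=0$; assuming it is reduced, it is a connected subcurve of $C_0$ of arithmetic genus one, hence contains the minimal elliptic subcurve $Z_0$ of $C_0$ (Corollary~\ref{C:minimality}), and since $\omega_{C_0}(\Sigma)$ is ample it has positive degree on the remaining components of $\mathcal Z_0$, so
$$\deg_{Z_{\overline\eta}}\omega_{C_{\overline\eta}}(\Sigma)\;=\;\deg_{\mathcal Z_0}\omega_{C_0}(\Sigma)\;\ge\;\deg_{Z_0}\omega_{C_0}(\Sigma)\;=\;|Z_0\cap\overline{C_0\backslash Z_0}|+|Z_0\cap\Sigma|\;>\;m,$$
the last inequality because $C_0$ is $m$-stable, of level $>m$. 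Thus $C_{\overline\eta}$ satisfies (2), so $T$ is stable under generization and hence open. The delicate step --- the main obstacle of the whole argument --- is precisely this control of the flat limit: one must rule out embedded points of $\mathcal Z_0$ (equivalently, keep the degree of $\omega_{C_0}(\Sigma)$ from becoming concentrated away from a genuine genus-one subcurve of $C_0$), and it is here that the special structure of minimal elliptic subcurves and of $m$-stable curves must be used. Conditions (1) and (3), by contrast, are comparatively formal, resting on the deformation-theoretic classification of Appendix~A and on the representability of the automorphism functor.
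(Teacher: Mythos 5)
Your handling of conditions (1) and (3) follows the paper's outline, and your argument for (3) --- openness of unramifiedness of the automorphism scheme along the identity section, combined with the identification of the tangent space at the identity with $H^0(C,\Omega^{\vee}_{C}(-\Sigma))$ --- is correct and in fact a little leaner than the paper's, which proves unramifiedness of the whole group scheme over a neighbourhood via translations. But in (1) the step asserting that every fibre of a deformation of an elliptic $l$-fold point has only nodes and elliptic $l'$-fold points with $l'\le l$ does \emph{not} follow from Appendix~A: the classification tells you which Gorenstein genus-one singularities exist, not which ones can appear arbitrarily close to a given one in a family. Some semicontinuity input is required, and this is exactly what the paper's proof supplies: for $l\ge 3$ it bounds the Zariski tangent space dimension of points of nearby fibres (an open condition on the total space, pushed down by properness) and then invokes Proposition~\ref{P:genus1}, while for the cusp and the tacnode --- where tangent-space dimension cannot separate the candidates --- it uses their explicit versal deformations. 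Semicontinuity of the $\delta$-invariant would also serve, but as written your step is an assertion, not a proof.

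The genuine gap is in condition (2), and you name it yourself. First, constructibility of the relevant locus is asserted without argument; the paper proves it by stratifying by topological type and noting that the level depends only on the topological type of the fibre. Second, and decisively, your generization argument hinges on the flat limit $\mathcal{Z}_0$ of the minimal elliptic subcurve being a connected, \emph{reduced} genus-one subcurve of $C_0$: you assume reducedness outright (``assuming it is reduced''), do not address connectedness, and describe precisely this as the main obstacle --- so the openness of condition (2), the heart of the lemma, is not established in your write-up. The paper avoids the flat-limit issue altogether: it takes an arbitrary connected arithmetic genus one subcurve $E_{\overline{\eta}}$ of the generic fibre, passes to its limit $E_0\subset C_0$, and compares only the counts of attaching points and marked points, so that the level inequality on the special fibre (via Corollary~\ref{C:level}) transfers directly to the generic fibre with no scheme-theoretic limit structure needed. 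If you prefer to keep your degree-theoretic route, the gap is closable --- $\chi(\O_{\mathcal{Z}_0})=0$, $\mathcal{Z}_0$ is generically reduced because $C_0$ is reduced, and a connected subcurve of a Gorenstein genus-one curve has arithmetic genus at most one, so once connectedness of $\mathcal{Z}_0$ is checked the embedded part is forced to have length zero --- but none of this appears in the proposal, and without it the argument is incomplete at its crux.
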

\begin{proof}
We may assume that the fibers of $\phi$ are reduced, connected, and of arithmetic genus one, since these are all open conditions \cite[12.2]{EGAIV}. We may also assume that the geometric fibers are Gorenstein (the locus in $S$ over which the geometric fibers are Gorenstein is the same as the locus over which the relative dualizing sheaf $\omega_{\C/S}$ is invertible, hence open). Finally, the conditions that the sections lie in the smooth locus of $\phi$, that $\sigma_{i_1}, \ldots, \sigma_{i_k}$ collide only if $\sum_{j=1}^{k}a_{i_j} \leq 1$, and that $\omega_{\C/S}(\Sigma_ia_i)$ is relatively ample are obviously open. It only remains to check conditions (1)-(3) of Definition \ref{D:mstable}.

For condition (1), suppose that $s \in S$ is a geometric point and that the fiber $C_{s}$ has an elliptic $m$-fold point $p$. We must show there exists an open neighborhood of $s$ over which the fibers of $\C$ have only elliptic $l$-fold points, $l \leq m$, and nodes. Suppose first that $m \geq 3$. Since the dimension of the Zariski tangent space of the ellliptic $m$-fold point is $m$ when $m \geq 3$, we have 
$$\dim_{k(x)}m_{x}/m_{x}^2 \leq m \text{ for every $x \in C_{s}$,}$$
where $m_{x}$ refers to the maximal ideal of $x$ in the local ring of the fiber. Thus, there is an open neighborhood of the fiber $C \subset V \subset X$ such that
$$\dim_{k(x)}m_{x}/m_{x}^2 \leq m \text{ for every $x \in V.$}$$
Since $\pi$ is proper, we may take $V$ to be of the form $\pi^{-1}(U)$ for some open set $U \subset S$. Now, for any $s \in U$, the fiber is a Gorenstein curve of arithmetic genus one whose Zariski tangent space dimension is everywhere $\leq m$. By Proposition \ref{P:genus1}, the only singularities appearing on fibers over $U$ are elliptic $l$-fold points, $l \leq m$, and nodes.

It remains to consider the case $m=2$ or $m=3$, i.e $p \in C_{s}$ is cusp or node. For this, we need a bit of deformation theory \cite{Sch}. Recall that the cusp and tacnode, being local complete intersections, admit versal deformations given by:
\begin{align*}
\Spec A[x,y,a,b]/(y^2=x^3+ax+b) &\rightarrow \Spec A[a,b],\\
\Spec A[x,y,a,b,c]/(y^2=x^4+ax^2+bx+c) &\rightarrow \Spec A[a,b,c],
\end{align*}
where $A=k(s)$ if $\characteristic k(s)=0$ or the unique complete local ring with residue field $k(s)$ and maximal ideal $pA$ if $\characteristic k(s)=p$. If $p \in C$ is a cusp (resp. tacnode), there is an etale neighborhood $(U,0) \rightarrow (S,s)$, and a map
$$
U \rightarrow \Spec A[a,b] \text{ ($\Spec A[a,b,c]$), }
$$
such that, etale-locally around $p \in C$, $\C \times_{S} U$ is pulled back from the versal family. Since the only singularities appearing in fibers of the versal deformation of the cusp (resp. tacnode) are nodes (resp. nodes and cusps), we are done.

For conditon (2), we must show that the locus in $S$ over which the fibers have level $>m$ is open in $S$. Since $S$ is noetherian, it suffices to show that this locus is constructible and stable under generalization. It is clearly constructible, since we may stratify $T$ into locally-closed subsets corresponding to the topological type of the fiber and observe that the level of a fiber depends only on the topological type. To see that it is stable under generalization, we may assume that $S$ is the spectrum of a discrete valuation ring with closed point $0 \in S$ and generic point $\eta \in S$. We must show that if $(C_{0}, \sigma_1(0), \ldots, \sigma_n(0))$ has level $>m$, then so does $(C_{\overline{\eta}},\sigma_1(\overline{\eta}), \ldots, \sigma_n(\overline{\eta}))$.

Let $E_{\overline{\eta}}$ be a connected arithmetic genus one subcurve of the geometric generic fiber $C_{\overline{\eta}}$. The limit of $E_{\overline{\eta}}$ in the special fiber is a connected arithmetic genus one subcurve $E_0 \subset C_0$ satisfying
\begin{align*}
| E_{\overline{\eta}} \cap  \overline{C_{\overline{\eta}} \backslash E_{\overline{\eta}}}  | &=|  E_0 \cap \overline{C_0 \backslash E_0} |, \\
 |E_{\overline{\eta}} \cap \Sigma_{\overline{\eta}}|&=|E_0 \cap \Sigma_0 |.
\end{align*}
Since $(C_0,\sigma_1(0), \ldots, \sigma_n(0))$ has level $>m$, we have
\begin{align*}
| E_{\overline{\eta}} \cap  \overline{C_{\overline{\eta}} \backslash E_{\overline{\eta}}}  | +  |E_{\overline{\eta}} \cap \Sigma|=|E \cap \overline{C \backslash E}|+|E \cap \Sigma | > m.
\end{align*}
Thus, $(C_{\overline{\eta}},\sigma_1(\overline{\eta}), \ldots, \sigma_n(\overline{\eta}))$ has level $>m$, as desired.

For condition (3), using the natural identification between $k[\epsilon]/\epsilon^{2}$-points of $\Aut_k(C,\{p_i\})$ and global sections of $\Omega^{\vee}_{C}(-\Sigma)$, it suffices to show that the locus $U \subset S$ over which the group scheme
$
\Aut_{S}(\C, \sigma_1, \ldots, \sigma_n) \rightarrow S
$
is unramified is open in $S$. But this is a general fact about group schemes: Suppose that $\pi:G \rightarrow S$ is any finite-type group scheme over a noetherian base with identity section $e:S \rightarrow G$, and suppose that $\pi$ is unramified over a point $s \in S$. Since the condition of being unramified is open on the domain, there is an open neighborhood $e(s) \in W \subset G$ such that $\pi|_{W}$ is unramified. Setting $U:=e^{-1}(W) \subset S$, we may use translations to cover $\pi^{-1}(U)$ by open sets over which $\pi$ is unramified.

\end{proof}

\subsection{Valuative criterion for $\SM_{1,n}(m)$} \label{S:ValuativeCriterion}
To show that $\SM_{1,\A}(m)$ is proper, it suffices to verify the valuative criterion for discrete valuation rings with algebraically closed residue field, whose generic point maps into the open dense substack $\mathcal{M}_{1,n}$ \cite[Ch. 7]{LMB}. Thus, the required statement is:

\begin{theorem}[Valuative Criterion for Properness of $\SM_{1,\A}(m)$]\label{T:Valuative}
Let $\Delta$ be the spectrum of a discrete valuation ring with algebraically closed residue field, and let $\eta \in \Delta$ be the generic point.
\begin{itemize}
\item[]
\item[(1)] (Existence of $(m, \A)$-stable limits) If $(\C,\sigma_1, \ldots, \sigma_n)|_{\eta}$ is a smooth $n$-pointed curve of arithmetic genus one over $\eta$, there exists a finite base-change $\Delta' \rightarrow \Delta$, and an $(m, \A)$-stable curve $(\C' \rightarrow \Delta', \sigma'_1, \ldots, \sigma'_n)$, such that
$$(\C', \sigma_1', \ldots, \sigma_n' )|_{\eta'} \simeq (\C,\sigma_1, \ldots, \sigma_n)|_{\eta} \times_{\eta} \eta'.$$
\item[(2)] (Uniqueness of $(m, \A)$-stable limits) Suppose that $(\C \rightarrow \Delta,\sigma_1, \ldots, \sigma_n)$  and $(\C' \rightarrow \Delta, \sigma'_1, \ldots, \sigma'_n)$ are $(m, \A)$-stable curves with smooth generic fiber. Then any isomorphism over the generic fiber
$$(\C,\sigma_1, \ldots, \sigma_n)|_{\eta} \simeq (\C',\sigma'_1, \ldots, \sigma'_n)|_{\eta}$$ extends to an isomorphism over $\Delta$:
$$(\C,\sigma_1, \ldots, \sigma_n) \simeq (\C',\sigma'_1, \ldots, \sigma'_n).$$
\end{itemize}
\end{theorem}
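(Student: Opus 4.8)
The plan is to reduce first to the case $\A=(1,\dots,1)$---the valuative criterion for $\SM_{1,n}(m)$---and then bootstrap to arbitrary weights. Granting that $m$-stable limits exist and are unique, one obtains the $(m,\A)$-stable limit from the $m$-stable limit $\C\to\Delta$ by Hassett's reduction procedure: run the relative minimal model program for $\omega_{\C/\Delta}(\sum_i a_i\sigma_i)$, contracting exactly the components of the special fiber on which this $\Q$-divisor has non-positive degree and collapsing together any sections whose weights sum to at most $1$. This produces a family whose special fiber satisfies (1)--(5) of Definition \ref{D:mstable}, and since the construction is canonical, uniqueness of the $m$-stable limit yields uniqueness of the $(m,\A)$-stable limit; this is the content of Section \ref{S:mAstability}. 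Below I concentrate on $\SM_{1,n}(m)$.

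For existence I would start, after a finite base change $\Delta'\to\Delta$, from a Deligne--Mumford semistable model $\C^{ss}\to\Delta'$ with regular total space, and examine its minimal elliptic subcurve $Z\subset C^{ss}_0$ (Lemma \ref{L:decomp}). If $(C^{ss}_0,\{\sigma_i(0)\})$ already has level $>m$, then contracting every rational component with fewer than three distinguished points gives the Deligne--Mumford stable limit, whose minimal elliptic subcurve still has level $>m$ (contracting rational bridges and tails cannot lower the level), so by Corollaries \ref{C:level} and \ref{C:automorphisms} it is $m$-stable. If the level is $\leq m$, I would first blow up every marked point lying on $Z$, replacing it by a rational tail, then blow up nodes along $Z$ as needed, and on the resulting regular model choose a connected genus-one subcurve $E$ containing $Z$, carrying no marked point, with $l:=|\overline{C^{ss}_0\backslash E}\cap E|\leq m$, arranged so that the curve obtained by collapsing $E$ has minimal elliptic subcurve of level $>m$. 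As in the ``balanced $\implies$ tail'' direction of Proposition \ref{P:semistablelimits}, the divisor $D=\sum_{F\subset E}(l+1-l(F,Z))F$ makes $\L:=\omega_{\C^{ss}/\Delta'}(D)$ positive on the generic fiber and on $\overline{C^{ss}_0\backslash E}$ and trivial on every component of $E$, so $\L$ meets the hypotheses of the Contraction Lemma \ref{L:Contraction}. The induced contraction $\phi:\C^{ss}\to\C$ collapses $E$ to a Gorenstein singularity with $l$ branches and $\delta$-invariant $l$, hence (Proposition \ref{P:genus1}) to an elliptic $l$-fold point; since $l\leq m$ and the blow-ups were chosen to control distinguished points, the special fiber of $\C\to\Delta'$ satisfies (1)--(3) of Definition \ref{D:mstable}, using Corollary \ref{C:automorphisms} and $\characteristic k\neq 2,3$.

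For uniqueness, given $m$-stable $\C_1\to\Delta$ and $\C_2\to\Delta$ with an isomorphism of generic fibers, I would (after a finite base change, descending the final isomorphism at the end) build a common semistable resolution $\C^{ss}$---normalize the closure of the common generic fiber, apply semistable reduction, resolve---dominating both via birational contractions $\phi_i:\C^{ss}\to\C_i$, and reduce to proving $\Exc(\phi_1)=\Exc(\phi_2)$: once this holds, both $\C_i$ are recovered from $\C^{ss}$ by contracting the same subcurve, hence are isomorphic compatibly with the sections. Each $E_i:=\Exc(\phi_i)$ is either empty (the classical nodal case) or a connected genus-one subcurve, and by Corollary \ref{C:minimality} each contains the common minimal elliptic subcurve $Z$ of $C^{ss}_0$. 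Since $\C_i\to\Delta$ is Gorenstein, $\phi_i^*\omega_{\C_i/\Delta}=\omega_{\C^{ss}/\Delta}(-D_i)$ for a Cartier divisor $D_i$ supported on $E_i$, and the verification of conditions $(A)$ and $(B)$ in the proof of Proposition \ref{P:semistablelimits} forces $D_i=\sum_{F\subset E_i}(l_i+1-l(F,Z))F$ with $l_i=l(\sigma_j(0),Z)$ for every section meeting $E_i$ along $\overline{C^{ss}_0\backslash E_i}$; in particular each $E_i$ is balanced with a well-defined radius $l_i$. If $E_1\subsetneq E_2$, picking a component $F\subset E_2\backslash E_1$ adjacent to $E_1$ and tracing these discrepancy relations shows that $\C_1$ would admit a further contraction of a rational subcurve with fewer than three distinguished points, or have minimal elliptic subcurve of level $\leq m$---contradicting $m$-stability; the symmetric argument excludes $E_2\subsetneq E_1$, and since both are connected and contain $Z$, we conclude $E_1=E_2$.

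The hardest part will be the bookkeeping in the existence step: one must exhibit blow-ups and a subcurve $E$ achieving \emph{simultaneously} $l\leq m$, level $>m$ after contraction, and the refined distinguished-point conditions (b1)--(b3) of Definition \ref{D:mstable}, and check that the Contraction Lemma output genuinely lands in $\SM_{1,n}(m)$. For uniqueness the parallel difficulty is the combinatorial claim that ``balanced'' (Proposition \ref{P:semistablelimits}) together with $m$-stability pins down $\Exc(\phi_i)$ among genus-one subcurves of $C^{ss}_0$ containing $Z$.
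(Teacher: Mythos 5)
Your uniqueness argument and your reduction of the weighted case to $\A=(1,\ldots,1)$ follow the paper's route (common semistable resolution, Corollary \ref{C:minimality}, the balanced-tail classification of Proposition \ref{P:semistablelimits}, and the relative MMP of Section \ref{S:mAstability}), though for weighted uniqueness "the construction is canonical" is not quite the argument: one must go backwards, showing that \emph{any} $(m,\A)$-stable limit $\C'$ is dominated by an $m$-stable curve $\C$ with $\C'=\Proj\oplus_{m\geq 0}\pi_*\left(\omega_{\C/\Delta}(\Sigma_i a_i\sigma_i)^m\right)$, so that uniqueness of the $m$-stable limit applies.

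The genuine gap is in your existence step, where you replace the paper's iteration (blow up the marked points on the current minimal elliptic subcurve $Z_i$, contract $Z_i$ using $\omega_{\B_i/\Delta}(\tilde Z_i+\sigma_1+\cdots+\sigma_n)$, track the level $l_i$ until it exceeds $m$, then stabilize) by a single contraction of a larger genus-one subcurve $E\supset Z$. For your line bundle $\L=\omega(D)$ with $D=\sum_{F\subset E}(l+1-l(F,Z))F$ to be trivial on \emph{every} component of $E$, the subcurve $E$ must be balanced -- all points of $E\cap\overline{C^{ss}\backslash E}$ at the same distance from $Z$ -- and by the converse direction of Proposition \ref{P:semistablelimits} this is not a convenience but a necessity: no divisor supported on $E$ works otherwise. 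You never impose or verify balancedness (indeed you define $l$ as the \emph{number} of attaching points, whereas in the divisor formula $l$ must be the common \emph{distance}), and your proposed tool for arranging it, ``blow up nodes along $Z$ as needed,'' fails outright: blowing up a node of the special fiber gives the exceptional curve multiplicity two, so the fiber is no longer reduced and Lemma \ref{L:Contraction} (and the moduli problem) no longer applies. What actually makes balancing possible is repeated blow-ups \emph{at marked points} (each such blow-up keeps the fiber reduced and lengthens the chain on that side by one), which is exactly what the paper's iteration does implicitly; alternatively one needs a further ramified base change. Finally, even granting a balanced $E$ with $\leq m$ attaching points, you still must show one can choose it so that after contraction the level exceeds $m$, the conditions (b1)--(b3) of Definition \ref{D:mstable}(3) hold, and the leftover two-pointed rational components outside the new elliptic subcurve are contracted without changing the level -- this is precisely the bookkeeping (properties (i)--(vii) and the stabilization step) that constitutes the paper's proof, and deferring it as ``the hardest part'' leaves the existence half unproved.
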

In this section, we will prove existence and uniqueness of $m$-stable limits, i.e. we will restrict to the special case $\A=(1, \ldots, 1)$. This will allow us exhibit the main ideas of the proof with a minimum of notational obfuscation. In section \ref{S:mAstability}, we will show that the existence and uniqueness of $(m,\A)$-stable limits can be deduced from the corresponding statement for $m$-stable limits, in the same way that the existence and uniqueness of $\A$-stable limits are deduced from the corresponding statement for Deligne-Mumford stable limits.
\subsubsection{Existence of $m$-stable Limits}
Given a one-parameter family of smooth curves over $\eta$, we construct the $m$-stable limit in three steps: First, we may assume (after a finite base-change) that this family extends to a semistable curve with smooth total space. In step two, we blow-up marked points on the minimal elliptic subcurve of the special fiber, and then contract the strict transform of the minimal elliptic subcurve using Lemma \ref{L:Contraction}. Repeating this process, one eventually reaches a stage where the minimal elliptic subcurve $Z$ satisfies
$$
|Z \cap \overline{C \backslash Z}| + |\{p_i \,|\, p_i \in Z\}| > m.
$$
At this point, we `stabilize,' i.e. blow-down all smooth $\P^{1}$'s which meet the rest of the fiber in two nodes and have no marked points, or meet the rest of the fiber in a single node and have one marked point. The entire process is pictured in figure \ref{F:valuativecriterion}.

\begin{figure}
\scalebox{.55}{\includegraphics{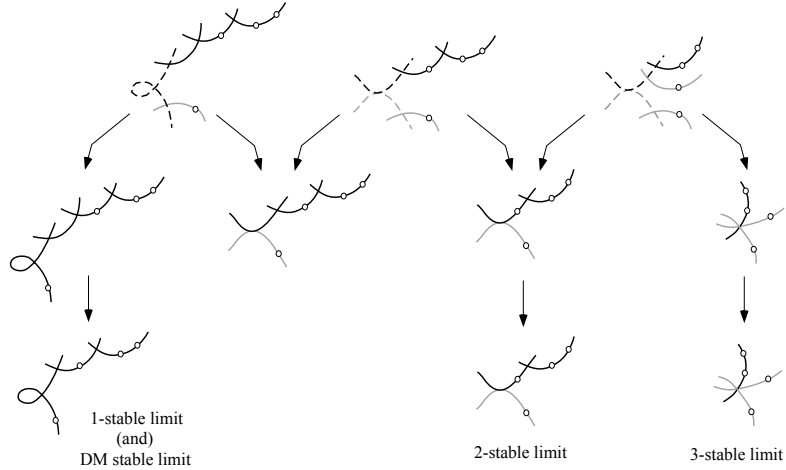}}
\caption{The process of blow-up/contraction/stabilization in order to extract the $m$-stable limit for each $m=1,2,3$. Every irreducible component pictured above is rational. The left-diagonal maps are simple blow-ups along the marked points of the minimal elliptic subcurve, and exceptional divisors of these blow-ups are colored grey. The right-diagonal maps contract the minimal elliptic subcurve of the special fiber, and exceptional components of these contractions are dotted. The vertical maps are stabilization morphisms, blowing down all semistable components of the special fiber.}\label{F:valuativecriterion}
\end{figure}

\begin{Step1} Pass to a semistable limit with smooth total space.
\end{Step1}
By the semistable reduction theorem \cite{DM}, there exists a finite base-change $\Delta' \rightarrow \Delta$, and a semistable curve $(\C^{ss} \rightarrow \Delta',\sigma'_1, \ldots, \sigma'_n)|_{\eta}$ such that
$$
(\C^{ss}, \sigma'_1, \ldots, \sigma'_n )|_{\eta'} \simeq (C,\sigma_1, \ldots, \sigma_n) \times_{\eta} \eta'.
$$
After taking a minimal resolution of singularities, we may assume that the total space of $\C^{ss}$ is regular. For notational simplicity, we will continue to denote our base by $\Delta$, and the given sections  by $\sigma_1, \ldots, \sigma_n$.

\begin{Step2} Alternate between blowing up marked points contained on the minimal elliptic subcurve and contracting the minimal elliptic subcurve.
\end{Step2}
Starting from $\C_{0}:=\C^{ss}$, we construct a sequence $\C_{0}, \C_{1}, \ldots, \C_{t}$ of flat proper families over $\Delta$ satisfying
\begin{itemize}
\item[(i)] The special fiber $C_{i} \subset \C_{i}$ is a Gorenstein curve of arithmetic genus one.
\item[(ii)] The total space $\C_{i}$ is regular at every node of $C_i$.
\item[(iii)] The strict transforms of $\sigma_1, \ldots, \sigma_n$ on $\C_i$ are contained in the smooth locus of $\pi_i$, so we may consider the special fiber as an $n$-pointed curve $(C_i, p_1, \ldots, p_n)$.
\item[(iv)] Every component of $C_{i}$ has at least two distinguished points.
\item[(v)] $C_{i}$ has an elliptic $l_{i-1}$-fold point $p$, where $l_i$ denotes the level of the special fiber $C_i$ (Definition \ref{D:level}).
\item[(vi)] $l_{i} \geq l_{i-1}.$ Furthermore, $l_i=l_{i-1}$ iff each irreducible component of $Z_i$ has exactly two distinguished points, where $Z_{i}$ is the minimal elliptic subcurve of $C_{i}$.
\item[(vii)] $C_{t}$ has no disconnecting nodes.\\
\end{itemize}
These families fit into the following diagram of birational morphisms over $\Delta$
\[
\xymatrix{
&\B_{0} \ar[dr]^{q_1} \ar[dl]_{p_1}&&\B_{1}\ar[dl]_{p_1} &\!\!\!\! \cdots \cdots \cdots \!\!\!\! & \B_{t-2} \ar[dr]^{q_{t-2}} && \B_{t-1} \ar[dr]^{q_{t-1}} \ar[dl]_{p_{t-1}} &&\\
\C^{ss}:=\C_{0} \ar@{-->}[rr] && \C_{1} \ar@{-->}[r]&&\!\!\!\! \!\!\!\!   \cdots \cdots \cdots\!\!\!\! \!\!\!\! &\ar@{-->}[r]& \C_{t-1}  \ar@{-->}[rr] && \C_{t}\\
}
\]

Indeed, given $\C_{i}$ satisfying (i)-(vi), we construct $\C_{i+1}$ as follows. $C_{i}$ is Gorenstein by (i), so it possesses a minimal elliptic subcurve $Z_i \subset C_{i}$, and we define $p_{i}:\B_{i} \rightarrow \C_{i}$ to be the simple blow-up of $\C_{i}$ at the finite set of smooth points $\{p_j \,|\, p_{j} \in Z_i \}$. We define $q_{i}:\B_{i} \rightarrow \C_{i+1}$ to be the contraction of $\tilde{Z}_i$, the strict transform of $Z_i$ in $\B_{i}$. ($q_i$ is uniquely characterized by the propertes that $\Exc(q_i)=\tilde{Z}_i$ and $q_{i_*}\O_{\B_{i}}=\O_{\C_{i+1}}$.)

To prove that $q_i$ exists, consider the line-bundle
$$
\L:=\omega_{\B_i/\Delta}(\tilde{Z}_i+\sigma_1+\ldots+\sigma_n). 
$$
Note that $Z_{i} \subset \C_i$ is Cartier by (ii), so $\tilde{Z}_i \subset \B_{i}$ is Cartier. Furthermore, $\sigma_1, \ldots, \sigma_n$ are Cartier divisors on $\B_{i}$ by (iii). Adjunction and Lemma \ref{L:MinEllipticSub} give
$$\L|_{\tilde{Z}_i} \simeq \omega_{\tilde{Z}_i} \simeq \O_{\tilde{Z}_i}.$$
By (iv), $\L$ has non-negative degree on every irreducible component of $B_{i}$ not contained in $\tilde{Z}_i$, and the subcurve $E \subset C_i$ on which $\L$ has degree zero is precisely
$$E=\tilde{Z}_i \cup F,$$
where $F$ is the union of irreducible components of $B_{i}$ which are disjoint from $\tilde{Z}_i$ and have exactly two distinguished points. Now Lemma \ref{L:Contraction} applies to the line-bundle $\L$, so $\tilde{Z}_i \cup F$ is a contractible subcurve of the special fiber. Since $\tilde{Z}_i$ is disjoint from $F$, we may certainly contract $\tilde{Z}_i$ on its own; this shows that $q_i: \B_{i} \rightarrow \C_{i+1}$ exists. 

Now we must show that $\C_{i+1}$ satisfies (i)-(vii), and that after finitely many steps we achieve condition (vii). 
\begin{itemize}
\item[(i)] Locally around $q(\tilde{Z}_i)$, $\C_{i+1}$ is isomorphic to the contraction given by a high power of $\L$, so Lemma \ref{L:Contraction} implies that $C_{i+1}$ is Gorenstein. \\

\item[(ii)] Since $\C_{i}$ is regular around every node of the special fiber, so is $\B_{i}$. Since $q(\tilde{Z}_i) \in C_{i+1}$ is not a node, the same is true for $\C_{i+1}$.\\

\item[(iii)] Immediate from the fact that none of the section $\sigma_1, \ldots, \sigma_n$ on $\B_{i}$ pass through $\tilde{Z}_i$.\\

\item[(iv)] Since every component of $C_{i}$ has at least two distinguished points, and every exceptional divisor of $p_i$ has two distinguished points, every component of $B_{i}$ has at least two distinguished points. Since $q_i$ maps distinguished points to distinguished points, every component of $C_{i+1}$ has at least two distinguished points.\\

\item[(v)] Write out the fundamental decomposition of $C_{i}$:
$$C_{i}=Z_i \cup R_1 \cup \ldots \cup R_k.$$
Then we can decompose the special fiber $B_{i}$ as
$$
B_i=\tilde{Z}_i \cup \tilde{R}_1 \cup \ldots \cup \tilde{R}_k \cup F_1 \cup \ldots \cup F_j,
$$
where $\tilde{Z}_i, \tilde{R}_i$ are the strict transforms of the corresponding subcurves in $C_i$, and $F_1, \ldots, F_j$ are the exceptional curves of the blow-up. Note that $l_i=j+k.$ Lemma \ref{L:Contraction} implies that $q(\tilde{Z}_i) \in C_{i+1}$ is a Gorenstein singularity with $l_i$ branches and $\delta=l_i$. By Proposition \ref{P:genus1}, there is a unique such singularity: the elliptic $l_i$-fold point.\\

\item[(vi)] With notation as above, let $G_i \subset \tilde{R}_i$ be the unique irreducible component meeting $\tilde{Z}_i$ for each $i=1, \ldots, k$.   When $\tilde{Z}_i$ is contracted, the minimal elliptic subcurve of $C_{i+1}$ consists of the smooth rational components
$$
q(G_1) \cup \ldots \cup q(G_k) \cup q(F_1) \cup \ldots \cup q(F_j),
$$
meeting along an elliptic $l_{i}$-fold point. It is easy to see at the level $l_{i+1}$ is just the number of distinguished points of $q(G_1), \ldots, q(G_k), q(F_1), \ldots, q(F_j)$ minus $j+k$. Indeed, each component $q(G_1), \ldots, q(F_j)$ has a distinguished point where it meets the elliptic $m$-fold point and these do not contribute to $l_{i+1}$, while the remaining distinguished points are either disconnecting nodes or marked points and these each contribute one to $l_{i+1}$. Since $q$ maps distinguished points of $G_1, \ldots, G_k, F_1, \ldots, F_j$ bijectively to distinguished points of $q(G_1), \ldots, q(G_k), q(F_1), \ldots, q(F_j)$, and since each $G_1, \ldots, G_k,$ $F_1, \ldots, F_j$ has at least two distinguished points, we have $l_{i+1} \geq l_i$. Furthermore, equality holds iff each $G_1, \ldots, G_k, F_1, \ldots, F_j$ has exactly two distinguished points.\\

\item[(vii)]In the previous paragraph, we saw that if
$$C_{i}=Z_i \cup R_1 \cup \ldots \cup R_k,$$
then one irreducible component from each subcurve $R_{i}$ is absorbed into the minimal elliptic subcurve $E_{i+1} \subset C_{i+1}$. It follows that the number of irreducible components of $\overline{C_{i+1} \backslash E_{i+1}}$ is less than the number of irreducible components of $\overline{C_{i} \backslash E_{i}}.$ Thus, after finitely many steps, we have $C_{t}=E_{t}$, i.e. $C_{t}$ has no disconnecting nodes. 
\end{itemize}

\begin{Step3} Stabilize to obtain $m$-stable limit.
\end{Step3}
By (vii), $C_{t}$ has no disconnecting nodes so $l_{t}=n$. Since $m<n$, we may set $$
e:=\min\{j \,|\, l_j>m\}.$$ Let
$
\phi: \C_e \rightarrow \C
$
be the `stabilization' contraction uniquely determined by the properties that $\phi_{*}\O_{\C_e}=\O_{\C}$, and 
$$\Exc(\phi)=\{\cup_{F \subset C_e}F \,\,| \,\,\text{$F \nsubseteq Z_e$ and $F$ has exactly two distinguished points} \}.$$
Since each component $F \subset C_{e}$ satisfying the above condition is a smooth rational curve meeting the rest of the special fiber in one or two nodes, and the total space $\C_{e}$ is regular around $F$, the existence of $\phi_i$ follows by standard results on the contractibility of rational cycles \cite{Lipman}. Furthermore, the images of the sections $\sigma_1, \ldots, \sigma_n$ on $\C_{e}$ lie in the smooth locus of $\C$, so we may consider the special fiber $(C,p_1, \ldots, p_n)$ as an $n$-pointed curve. To show that $(C, p_1, \ldots, p_n)$ is $m$-stable, we must verify conditions (1)-(3) of Definition \ref{D:mstable}.
\begin{itemize}
\item[(1)] \emph{$C$ has only nodes and elliptic-$l$ fold points, $l \leq m$, as singularities.} By conditions (i) and (v) above, $C_{e}$ has only nodes and an elliptic $l_{e-1}$-fold point as singularities, where $l_{e-1}<m$ by our choice of $e$. The same is true of $C$, since the only singularities produced by contracting semistable chains of rational curves are nodes.\\
\item[(2)]  \emph{$C$ has level $>m$.} The level of $C_{e}$ is $>m$ by our choice of $e$, so it suffices to see that the level of $C$ is the same as the level of $C_e$. Let
$$
C_e=Z_{e} \cup R_1 \cup \ldots \cup R_k,
$$
be the fundamental decomposition of $C_{e}$. Order the $R_{i}$ so that $R_1, \ldots, R_j$ consist entirely of components with two distinguished points, while $R_{j+1}, \ldots, R_{k}$ each contain a component with $\geq 3$ distinguished points. Then $\phi$ contracts each of $R_1, \ldots, R_j$ to a point, so that the fundamental decomposition of $C$ is
$$
C=\phi(Z_{e}) \cup \phi(R_{j+1}) \cup \ldots \cup \phi(R_k).
$$
Thus,
\begin{align*}
&|\overline{C \backslash \phi(Z_{e})}|=|\overline{C_e \backslash Z_{e}}|-j.
\end{align*}
On the other hand, since each $R_1, \ldots, R_j$ must be a chain of $\P^{1}$'s whose final component carries a marked point, $\phi(R_1), \ldots, \phi(R_j)$ will be marked points on the minimal elliptic subcurve $\phi(Z_{e})$, i.e. we have
$$
|\{p_i \,| \,p_i \in \phi(Z_{e})\}|= |\{p_i \,| \,p_i \in Z_{e}\}|+j.
$$
Thus, $|\overline{C \backslash \phi(Z_{e})}|+|\{p_i \,| \,p_i \in \phi(Z_{e})\}|=|\overline{C_e \backslash Z_{e}}|+ |\{p_i \,| \,p_i \in Z_{e}\}|$ as desired.\\
\item[(3)]  \emph{$(C,p_1, \ldots, p_n)$ satisfies the stability condition.} Since $\phi$ contracts every component of $R_1 \cup \ldots \cup R_k$ with two distinguished points, every component of $ \phi(R_1) \cup \ldots \cup \phi(R_k)$ has at least three distinguished points. It remains to check the stability condition for irreducible components of $\phi(Z_{e})$.

We may assume that $e \geq 1$, so $Z_{e}$ consists of $l_{e-1}$ smooth rational branches meeting in an elliptic $l_{e-1}$-fold point. Since no component of $Z_{e}$ is contained in $\Exc(\phi)$, $Z_{e}$ maps isomorphically onto $\phi(Z_{e})$ and condition (iv) implies that every component of $\phi(Z_{e})$ has at least two distinguished points. Finally, if every component of $\phi(Z_{e})$ had exactly two distinguished points, the same would be true of $Z_{e}$ and condition (vi) would imply that $l_i=l_{i-1}$. This contradicts our choice of $e$; we conclude that some component of $\phi(Z_{e})$ has at least three distinguished points.
\end{itemize}

\subsubsection{Uniqueness of $m$-stable Limits}
In order to prove that an isomorphism
$$
(\C,\sigma_1, \ldots, \sigma_n)|_{\eta} \simeq (\C',\sigma_1', \ldots, \sigma_n')|_{\eta}
$$
extends to an isomorphism over $\Delta$, it suffices to check that the rational map $\C \dashrightarrow \C'$ extends to an isomorphism after a finite base-change. Thus, we may assume that there exists a flat proper nodal curve $(\C^{ss} \rightarrow \Delta, \tau_1, \ldots, \tau_n)$ with regular total space and a diagram
\[
\xymatrix{
&(\C^{ss}, \tau_1, \ldots, \tau_n) \ar[rd]^{\phi'} \ar[ld]_{\phi} &\\
(\C,\sigma_1, \ldots, \sigma_n)& &(\C', \sigma'_1, \ldots, \sigma'_n)
}
\]
where $\phi$ and $\phi'$ are proper birational morphisms over $\Delta$. In fact, we may further assume that $(\C^{ss} \rightarrow \Delta, \tau_1, \ldots, \tau_n)$ is Deligne-Mumford semistable. Indeed, any unmarked (-1)-curve in the special fiber $C^{ss}$ must be contracted by both $\phi$ and $\phi'$ since neither $C$ nor $C'$ contain unmarked smooth rational components meeting the rest of the curve in a single point. Thus, $\phi$ and $\phi'$ both factor through the minimal model of $\C^{ss}$, obtained by successively blowing down unmarked (-1)-curves.

The strategy of the proof is to show that $\Exc(\phi)=\Exc(\phi')$. Since $\C$ and $\C'$ are normal, this immediately implies $\C \simeq \C'$ . The proof proceeds in three steps: In step 1, we handle the case where either $C$ or $C'$ is a nodal curve. After step 1, we may assume that $C$ and $C'$ each have a non-nodal singular point, say $p$ and $p'$, and we set
\begin{align*}
E&:=\phi^{-1}(p) \subset C^{ss}\\
E'&:=\phi'^{-1}(p') \subset C^{ss}.
\end{align*}
Using the classification of semistable tails of the elliptic $m$-fold point (Proposition \ref{P:semistablelimits}), we show that $E=E'$. Finally, in step 3, we show that $E=E'$ implies $\Exc(\phi)=\Exc(\phi')$.

\begin{Step1} The case when $C$ or $C'$ contains is nodal.
\end{Step1}
We may assume that $C'$ is nodal, but that $C$ contains an elliptic $l$-fold point $p$ for some $l \leq m$. Indeed, if $C$ and $C'$ are both nodal, then they are Deligne-Mumford stable, so $\C \simeq \C'$ by usual stable reduction theorem. Now set
$$
E:=\phi^{-1}(p) \subset C^{ss},
$$
and note that $p_a(E)=1$ and $|E \cap \overline{C^{ss} \backslash E}|=l \leq m.$ It follows that $\phi'(E) \subset C'$ is an unmarked connected arithmetic genus one subcurve meeting $\overline{C' \backslash \phi'(E')}$ in no more than $m$ points, which contradicts the $m$-stability of $C'$.

\begin{Step2} $E=E'$.
\end{Step2}
By step 1, we may assume that $C$ and $C'$ each have a non-nodal singular point, say $p$ and $p'$, and we set
\begin{align*}
E&:=\phi^{-1}(p) \subset C^{ss},\\
E'&:=\phi^{-1}(p') \subset C^{ss}.
\end{align*}
We invoke Proposition \ref{P:semistablelimits}, which says that $(E,q_1, \ldots, q_k)$ and $(E',q'_1, \ldots, q'_l)$ are \emph{balanced}, where
\begin{align*}
\{q_1, \ldots, q_k\}:&=\{E \cap \overline{C^{ss} \backslash E}\}\\
\{q'_1, \ldots, q'_l\}:&=\{E' \cap \overline{C^{ss} \backslash E'}\}
\end{align*}
Let $Z \subset C^{ss}$ be the minimal elliptic subcurve of $C^{ss}$. By Corollary \ref{C:minimality}, we have $Z \subset E$ and $Z \subset E'$. Proposition \ref{P:semistablelimits} implies there exist integers $l$ and $l'$ such that
\begin{align*}
l&:=l(Z,q_1)=\ldots=l(Z,q_k)\\
l'&:=l(Z,q'_1)=\ldots=l(Z,q'_l)
\end{align*}
Put differently, this says that $E$ comprises all components in $C^{ss}$ whose length from $Z$ is less than $l$, while $E'$ comprises all irreducible components in $C^{ss}$ whose length from $Z$ is less than $l'$. If $l=l'$, then we have $E=E'$ and we are done. Otherwise, we may assume that $l<l'$, and we have a strict containment $E \subset E'$. But then, since $E'$ meets $\overline{C^{ss} \backslash E'}$ in no more than $m$ points, $\phi(E') \subset C$ is a connected arithmetic genus one subcurve meeting $\overline{C \backslash \phi(E')}$ in no more than $m$ points. This contradicts the $m$-stability of $C$.
\begin{Step3}
$\Exc(\phi)=\Exc(\phi')$
\end{Step3}
It is enough to show that $E$ and $E'$ determine $\Exc(\phi)$ and $\Exc(\phi')$ in the following sense:
\begin{align*}
\Exc(\phi):&=E \, \cup \{F \, |  \,F \cap E = \emptyset \text{ and $F$ has two distinguished points }\}\\
\Exc(\phi'):&=E' \cup \{F \,| \, F \cap E' = \emptyset \text{ and $F$ has two distinguished points }\}
\end{align*}
Let us argue the first equality (the argument for the second is identical).

It is clear that no irreducible component of $C^{ss}$ which meets $E$ can be contracted by $\phi$. Such a component would be contracted to the point $p$ and hence contained in $E:=\phi^{-1}(p).$ It remains to see that an irreducible component $F \subset C^{ss}$ with $F \cap E = \emptyset$ is contracted iff $F$ has exactly two distinguished points. If $F$ has at least three distinguished points, then it cannot be contracted without introducing: a singular point with more than three branches, a section passing through a node, or two sections colliding, any one of which contradicts the $m$-stability of $C$. On the other hand, if $F$ has two distinguished points, then $F$ must be contracted or else $\phi(F) \subset C$ is an irreducible component lying outside the minimal elliptic subcurve and containing only two distinguished points. This completes the proof.
\subsection{Valuative criterion for $\SM_{1,\A}(m)$}\label{S:mAstability}
In this section, we complete the proof of Theorem \ref{T:Valuative} by handling the case when $\A \neq (1, \ldots, 1)$. The key idea, following Hassett \cite{Hassett4},  is that we can construct the $(m,\A)$-stable limit from the $m$-stable limit by running a relative minimal model program with respect to $\omega_{\C/\Delta}(\Sigma_{i}a_i\sigma_i)$.

\subsubsection{Existence of Limits}
Given a family of smooth $n$-pointed curves over the generic point of the spectrum of a discrete valuation ring $\Delta$, we may (after a finite base-change) complete this family to an $m$-stable curve $(\pi:\C \rightarrow \Delta, \sigma_1, \ldots, \sigma_n)$. To obtain the $(m,\A)$-stable limit, we construct a sequence of birational contractions
$$\C:=\C_{0} \rightarrow \C_{1} \rightarrow \ldots \rightarrow \C_{N},$$
where each special fiber $C_{i}$ satisfies conditions (1)-(4) of Definition \ref{D:mstable}, and such that $\omega_{\C_N/\Delta}$ is relatively ample. Thus, $\C_{N} \rightarrow \Delta$ is the desired $(m,\A)$-stable limit. 

To construct this sequence of contractions, we proceed by induction on $i$. If $\omega_{\C_i/\Delta}(\Sigma_{i}a_i\sigma_i)$ is ample, we are done. If not, then $\omega_{\C_i/\Delta}(\Sigma_{i}a_i\sigma_i)$ has non-positive degree on some component of the special fiber, and we claim that this component must be a smooth rational curve meeting the rest of the fiber in a single node. To see this, note that condition (3) of Definition \ref{D:mstable} implies that every component $F \subset C_{i}$ satisfies one of the following:
\begin{itemize}
\item[(I)] $F$ has arithmetic genus one and at least one distinguished point.
\item[(II)] $F$ is a smooth rational component meeting an elliptic $l$-fold point and has at least one additional distinguished point.
\item[(III)] $F$ is a smooth rational component meeting the rest of the fiber in at least two nodes and has at least one additional distinguished point.
\item[(IV)]  $F$ is a smooth rational component meeting the rest of the fiber in one node.
\end{itemize}
On components of type (I)-(III), the restriction of the dualizing sheaf $\omega_{\C/\Delta}|_{F}$ has non-negative degree. Since the weights $a_{i}$ are each positive, each distinguished point contributes a positive amount to the degree, and we conclude that $\omega_{\C_i/\Delta}(\Sigma_{i}a_i\sigma_i)$ has positive degree on all such components. Thus, if $\omega_{\C_i/\Delta}(\Sigma_{i}a_i\sigma_i)$ fails to be ample, it has non-positive degree on a component of type (IV). If $F \subset C_{i}$ is such a component, standard results on the contractibility of rational cycles imply the existence of a projective birational contraction $\phi: \C_{i} \rightarrow \C_{i+1}$ contracting $F$ to a smooth point  \cite{Lipman}. 

Let us check that $\C_{i+1}$ still satisfies conditions (1)-(4) of Definition \ref{D:mstable}. Condition (1) is clear since $\phi(F) \in C_{i+1}$ is a smooth point. For condition (2), we claim that the level of $C_{i+1}$ is the same as the level of $C_{i}$. To see this, note that $F$ does not belong to the minimal elliptic subcurve $Z_{i} \subset C_{i}$, and consider two cases. If $\phi(F)$ is not contained in the minimal elliptic subcurve $Z_{i+1} \subset C_{i+1}$, then clearly the level is unchanged. On the other hand, if $\phi(F) \in Z_{i+1}$, then the fact that $F$ must contain at least one marked point implies
\begin{align*}
|Z_{i+1} \cap \overline{C_{i+1} \backslash Z_{i+1}}|&=|Z_i \cap \overline{C_i \backslash Z_i}|-1\\
|Z_{i+1} \cap \Sigma_{i+1}|&=| Z_i \cap \Sigma_i |+1,
\end{align*}
where $\Sigma_i$ is the support of the divisor of marked points on $C_{i}$. Thus, the level $|Z_i \cap \overline{C_i \backslash Z_i}|+|\Sigma_{i} \cap Z_i|$  is again unchanged. For condition (3), simply note that every component in $C_{i+1}$ has as many distinguished points as its strict transform in $C_{i}$. Finally, for condition (4), we must check that if $p_{i_1}, \ldots, p_{i_k}$ are the marked points supported on $E$, then $\sum_{j=1}^{k}a_{i_j} \leq 1$. This is clear since we chose a component $F$ on which $\omega_{C_i}(\Sigma_{i}a_ip_i)$ had non-positive degree.

Since there are only finitely many components in the special fiber of $\C$, and the total degree of $\omega_{\C/\Delta}(\Sigma_ia_i\sigma_i)$ is positive, we must achieve ampleness of $\omega_{\C_{i}/\Delta}(\Sigma_ia_i\sigma_i)$ after finitely many repetitions of this procedure.

\subsubsection{Uniqueness of Limits}
To prove uniqueness of $(m,\A)$-stable limits, it suffices (by uniqueness of $m$-stable limits) to show the following: Given an $(m,\A)$-stable curve $(\pi':\C' \rightarrow \Delta, \sigma_1, \ldots, \sigma_n)$ with smooth generic fiber, there exists an $m$-stable curve $(\pi:\C \rightarrow \Delta, \sigma_1, \ldots, \sigma_n)$ and a birational morphism $\C \rightarrow \C'$ such that
$$\C'=\Proj \oplus_{m \geq 0} \pi_*\left(\omega_{\C/\Delta}(\Sigma_{i}a_i\sigma_i)^m\right),
$$
where the sum is taken over $m$ sufficiently divisible so that $\omega_{\C/\Delta}(\Sigma_{i}a_i\sigma_i)^m$ is integral.

To obtain $\C \rightarrow \C',$ simply apply stable reduction locally around the points of $C'$ where marked points coincide. This gives a diagram of birational morphisms \[
\xymatrix{
&\C^{ss} \ar[dr]^{\phi_1} \ar[dl]_{\phi_2}&\\
\C \ar[rr]^{\phi}&&\C'
}
\]
satisfying:
\begin{itemize}
\item $\phi_1$ is a composition of blow-ups along smooth points of the special fiber.
\item $\phi_{1}(\Exc(\phi_1)) \in C'$ is the locus where two or more marked points coincide.
\item  $\phi_2$ is the contraction of all unmarked $(-2)$-curves in $\Exc(\phi_1)$.
\item The strict transforms of $\sigma_1, \ldots, \sigma_n$ on $\C$ are disjoint.
\item $\omega_{\C/\Delta}(\Sigma_i\sigma_i)$ is $\phi$-ample.
\end{itemize}
We claim that $(\C \rightarrow \Delta, \sigma_1, \ldots, \sigma_n)$ is an $m$-stable curve. By construction, the sections $\sigma_1, \ldots, \sigma_n$ are distinct, and $\omega_{\C/\Delta}(\Sigma_i\sigma_i)$ is relatively ample, so it suffices to check conditions (1)-(3) of Definition \ref{D:mstable}. For condition (1), since $C'$ has only nodes and elliptic $l$-fold points, the same is true of $C$. For condition (2), we will show that the level of $C$ is the same as the level of $C'$. To see this, let $Z \subset C$ be the minimal elliptic subcurve of $C$, we write the fundamental decomposition
$$
C=Z \cup R_1 \cup \ldots \cup R_k.
$$
We may order the $R_{i}$ so that $R_{1}, \ldots, R_{j}$ are contracted to a point by $\phi$, while $R_{j+1}, \ldots, R_k$ are not. Then
$$
C'=\phi(Z) \cup \phi(R_{j+1}) \cup \ldots \cup \phi(R_{k})
$$
is the fundamental decomposition of $C'$, so we have 
$$|\phi(Z) \cap \overline{C' \backslash \phi(Z)}|=|Z \cap \overline{C \backslash Z}|-j.$$
 On the other hand, since each rational chain $R_{i}$ must support at least one marked point, the points $\phi(R_{1}), \ldots, \phi(R_{j})$  are now marked distinguished points on $\phi(Z)$. Thus, 
 $$|\phi(Z) \cap \Sigma' |=|Z \cap \Sigma|+j.$$
 In sum, we get, $$|\phi(Z) \cap \overline{C' \backslash \phi(Z)}|+|\Sigma' \cap \phi(Z)|=|Z \cap \overline{C \backslash Z}|+|\Sigma \cap Z|,$$ as desired. Finally, condition (3) is immediate from the fact that each irreducible component of $\Exc(\phi)$ has at least three distinguished points.

To see that $\C'=\Proj \oplus_{m \geq 0} \pi_*\left(\omega_{\C/\Delta}(\Sigma_{i}a_i\sigma_i)^m\right),
$
we only need to check that 
\begin{align*}
\omega_{\C^{ss}/\Delta}(\Sigma_{i}a_i\sigma_i)-\phi_1^*\,\omega_{\C'/\Delta}(\Sigma_i a_i\sigma_i) \geq 0,\\
\omega_{\C^{ss}/\Delta}(\Sigma_{i}a_i\sigma_i)-\phi_2^*\,\omega_{\C/\Delta}(\Sigma_i a_i\sigma_i) \geq 0.
\end{align*}
Indeed, this implies that 
$$
\pi'_*\left(\omega_{\C'/\Delta}(\Sigma_{i}a_i\sigma_i)^m \right)=\pi^{ss}_*\left(\omega_{\C^{ss}/\Delta}(\Sigma_{i}a_i\sigma_i)^m\right)=\pi_*\left(\omega_{\C/\Delta}(\Sigma_{i}a_i\sigma_i)^m\right)
$$
for all $m>>0$ sufficiently divisible. Since $\omega_{\C'}(\sum_{i}a_i\sigma_i)$ is an ample $\Q$-divisor, this gives
\begin{align*}
\C'&=\Proj \oplus_{m \geq 0} \pi'_*\left(\omega_{\C'/\Delta}(\Sigma_{i}a_i\sigma_i)^m\right)\\&= \Proj \oplus_{m \geq 0} \pi^{ss}_*\left(\omega_{\C^{ss}/\Delta}(\Sigma_{i}a_i\sigma_i)^m\right)\\&=\Proj \oplus_{m \geq 0} \pi_*\left(\omega_{\C/\Delta}(\Sigma_{i}a_i\sigma_i)^m\right).
\end{align*}
Since $p_{i_1}, \ldots, p_{i_k} \in C'$ coincide only if $\sum_{j=1}^{k}a_{i_j} \leq 1$, $\phi_1$ is composed of blow-ups at smooth points where the total multiplicity of $\sum_{i}a_i\sigma_i$ is less than or equal to one, which gives
$$
\omega_{\C^{ss}/\Delta}(\Sigma_{i}a_i\sigma_i)-(\phi_1)^*\omega_{\C'/\Delta}(\Sigma_i a_i\sigma_i) \geq 0.
$$
On the other hand, since $\phi_2$ is simply a contraction of unmarked $(-2)$-curves, we have
$$
\omega_{\C^{ss}/\Delta}(\Sigma_{i}a_i\sigma_i)-(\phi_2)^*\omega_{\C'/\Delta}(\Sigma_i a_i\sigma_i) = 0.
$$

\appendix
\section{Gorenstein curve singularities of genus one}
Let $C$ be a curve over an algebraically closed field $k$, $p \in C$ a singular point, and $\pi: \tilde{C} \rightarrow C$ be the normalization of $C$ at $p$. We have the following basic numerical invariants.
\begin{definition}
\begin{align*}
\delta(p)&:=\dim_k \pi_*\O_{\tilde{C},p}/\O_{C,p}\\
m(p)&:=|\pi^{-1}(p)|\\
g(p)&:=\delta(p)-m(p)+1\\
\end{align*}
\end{definition}
We call $g(p)$ the \emph{genus} of the singularity. Note that if $C$ is complete and has arithmetic genus $g$, then $g(p) \leq g$. The purpose of this appendix is to classify (up to analytic isomorphism) Gorenstein singularities of genus zero and one. The main results are

\begin{proposition}\label{P:genus0}
If $p \in C$ has $m$ branches and genus zero, then
$$\hat{O}_{C,p} \simeq k[[x_1, \ldots, x_m]]/I,$$ where
$$
I:=(x_ix_j: 1 \leq i<j \leq m).
$$
Furthermore, $p$ is Gorenstein iff $m=2$. (i.e. when $p$ is an ordinary node.)
\end{proposition}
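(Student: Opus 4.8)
The plan is to prove the structure statement and the Gorenstein criterion in turn. For the structure statement, let $p\in C$ be a singularity with $m$ branches and genus zero, so $\delta(p)=m-1$. Let $\pi:\tilde C\to C$ be the normalization at $p$, and choose uniformizers $t_1,\dots,t_m$ so that $\hat\O_{\tilde C,\pi^{-1}(p)}\simeq k[[t_1]]\oplus\cdots\oplus k[[t_m]]$. Since $\delta(p)=m-1$, which is the minimal possible value for a singularity with $m\ge 2$ branches (each pair of branches forces at least one condition, and even the minimal gluing — identifying the $m$ points of $\pi^{-1}(p)$ to a single point — costs exactly $m-1$), the ring $\hat\O_{C,p}$ must be as large as possible. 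Concretely, I would argue that $\hat\O_{C,p}$ contains $\mathfrak{m}^2:=(t_1^2)\oplus\cdots\oplus(t_m^2)$: the quotient $\big(\bigoplus_i k[[t_i]]\big)/\big(\hat\O_{C,p}+\mathfrak m^2\big)$ has dimension $\ge m-1$ unless $\mathfrak m^2\subset\hat\O_{C,p}$, by counting; combined with the fact that the diagonal $k\cdot(1,\dots,1)\subset\hat\O_{C,p}$ and $\dim_k\big(\bigoplus_i k[[t_i]]/\hat\O_{C,p}\big)=m-1$, one concludes $\hat\O_{C,p}=k\cdot(1,\dots,1)\oplus\mathfrak m^2\oplus(\text{a line in }\bigoplus_i m_{p_i}/m_{p_i}^2)$. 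In fact since $\delta=m-1$ exactly, $\hat\O_{C,p}=k\oplus\mathfrak m^2$, i.e. $\hat\O_{C,p}$ consists of those $(f_1,\dots,f_m)$ with $f_1(0)=\cdots=f_m(0)$ and vanishing to order $\ge 2$ beyond constants — that is, no linear part. Setting $x_i:=$ the image of $t_i^2 + (\text{stuff})$... more cleanly: $\hat\O_{C,p}$ is generated over $k$ by elements $x_i$ that restrict to $t_i+(\text{higher order})$ on the $i$-th branch and vanish on the others; these satisfy $x_ix_j=0$ for $i\ne j$, and counting dimensions shows $\hat\O_{C,p}\simeq k[[x_1,\dots,x_m]]/(x_ix_j:i<j)$. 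The main obstacle here is making the "$\delta=m-1$ is minimal, hence the ring is maximal" argument airtight — I would phrase it via the exact filtration of $\bigoplus_i k[[t_i]]/\hat\O_{C,p}$ by order of vanishing and show each graded piece beyond degree one is forced to be zero while the degree-one piece has dimension exactly $m-1$ on the nose.

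For the Gorenstein criterion, I would use the duality description of $\omega_C$ recalled just before Proposition \ref{P:dualizingsheaf}: near $p$, $\omega_C=\pi_*K_C(\Delta)$, the sheaf of rational differentials $\omega=\bigoplus_i\omega_i$ on $\tilde C$ with $\sum_i\Res_{p_i}((\pi^*f)\,\omega)=0$ for all $f\in\O_{C,p}$. One computes that, since every $f\in\hat\O_{C,p}$ with $m\ge 3$ has no linear term, $\omega$ is only constrained by the residue condition for constants $f$, giving $\sum_i\Res_{p_i}\omega_i=0$ but allowing each $\omega_i$ to have an arbitrary double pole $a_i\,dt_i/t_i^2$. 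So $\omega_C/\pi_*\omega_{\tilde C}$ has dimension $m$ near $p$ (the $m$ residues of the double-pole coefficients are free, the $m$ simple-pole residues are constrained by one equation... wait, recount: double poles give $m$ free parameters, simple poles give $m$ parameters subject to one relation, total $2m-1$), hence $\dim_k(\omega_C)_p/(\pi_*\omega_{\tilde C})_p$ is too large for $\omega_C$ to be invertible when $m\ge 3$: a computation of $\operatorname{length}(\omega_C/\mathfrak m\,\omega_C)$ at $p$ shows it exceeds $1$. Cleaner still: use the general fact that a curve singularity is Gorenstein iff $\dim_k\big((\pi_*\O_{\tilde C}/\O_C)\big) = \dim_k\big(\text{conductor quotient}\big)$ in the right sense, or directly that $\hat\O_{C,p}=k\oplus\mathfrak m^2$ fails the symmetry of the value semigroup / fails $\ell(\hat\O^\vee_{C,p}) = \ell(\hat\O_{C,p})$ for $m\ge 3$. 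For $m=2$ the ring is $k[[t_1^2,t_2^2,\text{(nothing linear)}]]$... but wait — for $m=2$, $\delta=1$ and $\hat\O_{C,p}$ has a linear element $t_1\oplus t_2$, so $\hat\O_{C,p}\simeq k[[x,y]]/(xy)$, the node, which is a hypersurface hence Gorenstein.

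The punchline for "$p$ Gorenstein $\iff m=2$": when $m=2$, $\hat\O_{C,p}\simeq k[[x,y]]/(xy)$ is a complete intersection, so Gorenstein. When $m\ge 3$, I would show $k[[x_1,\dots,x_m]]/(x_ix_j:i<j)$ is not Gorenstein either by exhibiting that its socle $\operatorname{Ann}(\mathfrak m)$ in the Artinian quotient $\hat\O_{C,p}/(x_1^{N},\dots,x_m^{N})$ is $m$-dimensional (spanned by the top powers $x_i^{N-1}$) rather than $1$-dimensional, or equivalently by the residue computation above showing $\omega_C$ needs $m$ generators. I expect the Gorenstein direction — specifically pinning down the socle dimension or equivalently the minimal number of generators of $\omega_C$ as $\O_C$-module — to be the main technical point, though it is a short commutative-algebra computation once the ring structure $k[[x_1,\dots,x_m]]/(x_ix_j)$ is in hand. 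I would assume Proposition \ref{P:dualizingsheaf}'s duality setup and the classification framework of Appendix A freely and keep the argument to a page.
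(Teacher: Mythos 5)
Your final presentation of $\hat{\mathcal{O}}_{C,p}$ is the right one, but the argument you give for it runs in the wrong direction at the decisive step. Filtering $\tilde{R}/\hat{\mathcal{O}}_{C,p}$ by order of vanishing (the paper's graded pieces $(\tilde{R}/R)^i$), the degree-\emph{zero} piece already has dimension exactly $m-1$: since $\hat{\mathcal{O}}_{C,p}$ is local with residue field $k$, its image in $\tilde{R}/m_{\tilde{R}}\simeq k^m$ is the diagonal. As $\delta=m-1$, genus zero therefore forces every graded piece in degree $\geq 1$ to vanish, and completeness then gives $m_{\tilde{R}}\subset\hat{\mathcal{O}}_{C,p}$, i.e. $\hat{\mathcal{O}}_{C,p}=k+m_{\tilde{R}}$: functions agreeing at the origin with \emph{arbitrary, independent} linear parts on the branches. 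Your intermediate claims say the opposite: ``$\hat{\mathcal{O}}_{C,p}=k\oplus\mathfrak{m}^2$, no linear part'' describes a ring with $\delta=2m-1$ (a genus-$m$ singularity, the cusp when $m=1$), not genus zero, and it contradicts your own closing description of generators $x_i$ restricting to $t_i+\cdots$ on the $i$-th branch; likewise ``the degree-one piece has dimension exactly $m-1$'' must be replaced by ``the degree-zero piece has dimension $m-1$ and all higher pieces vanish.'' Once $m_R=m_{\tilde{R}}$ is in hand, the paper simply notes that $x_i\mapsto(0,\dots,0,t_i,0,\dots,0)$ defines a map $k[[x_1,\dots,x_m]]\to R$ that is surjective on tangent spaces, hence surjective, with kernel $(x_ix_j:i<j)$.

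The Gorenstein half inherits this error and also relies on an invalid test. The residue count allowing double poles ``because $f$ has no linear term'' is the computation for the wrong ring: for the axes ring the linear elements $x_i$ impose conditions killing all double poles, and near $p$ the dualizing module consists of differentials with at most simple poles whose residues sum to zero, requiring $m-1$ generators. More seriously, the socle of $\hat{\mathcal{O}}_{C,p}/(x_1^N,\dots,x_m^N)$ does not detect Gorensteinness, because $(x_1^N,\dots,x_m^N)$ is not a parameter ideal of this one-dimensional ring: for the node $k[[x,y]]/(xy)$ the same quotient has two-dimensional socle (spanned by $x^{N-1},y^{N-1}$), yet the node is Gorenstein, so your criterion would reject it too. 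A correct version of your idea is to quotient by the single non-zerodivisor $x_1+\cdots+x_m$, giving $k[x_1,\dots,x_{m-1}]/(x_ix_j,\,x_i^2)$ with socle of dimension $m-1$, which is $1$ iff $m=2$. The paper's route is shorter still: since $m_{\tilde{R}}\subset R$, the conductor is $I_p=m_R$, so $\dim_k(R/I_p)=1$ while $\dim_k(\tilde{R}/R)=m-1$, and the Gorenstein equality $\dim_k(R/I_p)=\dim_k(\tilde{R}/R)$ holds exactly when $m=2$.
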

\begin{proposition}\label{P:genus1}
If $p \in C$ is Gorenstein with $m$ branches and genus one, then $p$ is an elliptic $m$-fold point, i.e.
$$
\hat{O}_{C,p} \simeq
\begin{cases}
k[[x,y]]/(y^2-x^3) & m=1\\
k[[x,y]]/y(y-x^2) & m=2 \\
k[[x,y]]/xy(y-x) & m=3\\
k[[x_1, \ldots, x_{m-1}]]/I_m & m \geq 4,
\end{cases}
$$
where $I_{m}$ is the ideal generated by all quadrics of the form
\begin{align*}
x_{h}(x_i-x_j) \text{ with } i,j,h \in \{1, \ldots, m-1\} \text{ distinct.}
\end{align*}
\end{proposition}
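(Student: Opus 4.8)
The plan is to carry out a direct commutative-algebra classification. Write $\O := \hat{\O}_{C,p}$, let $\tilde{\O} := \hat{\O}_{\tilde{C},\pi^{-1}(p)} \simeq k[[t_1]] \oplus \cdots \oplus k[[t_m]]$ be the normalization, let $\mathfrak{m} \subset \O$ be the maximal ideal, $\mathfrak{m}_{\tilde{\O}} := \bigoplus_i (t_i)$ the Jacobson radical of $\tilde{\O}$, and $\mathfrak{c} = \bigoplus_i (t_i^{c_i})$ the conductor of $\O$ in $\tilde{\O}$. By hypothesis $\dim_k \tilde{\O}/\O = \delta = m$ (this is genus one) and $\O$ is Gorenstein. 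The strategy is: (i) extract two numerical constraints, one from ``$\delta=m$'' and one from ``Gorenstein''; (ii) combine them to prove $\mathfrak{c} = \mathfrak{m}_{\tilde{\O}}^2$, i.e.\ every $c_i=2$; (iii) conclude that $\O$ is pinned down by a hyperplane in $\mathfrak{m}_{\tilde{\O}}/\mathfrak{m}_{\tilde{\O}}^2 \cong k^m$ satisfying the hypotheses of Lemma \ref{L:mpoint}; (iv) classify such hyperplanes up to $\mathrm{Aut}(\tilde{\O})$ and read off the normal forms.

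For (i): since $\tilde{\O}$ is integral over $\O$ we have $\mathfrak{m} = \O \cap \mathfrak{m}_{\tilde{\O}}$, so a length count using $\tilde{\O}/\mathfrak{m}_{\tilde{\O}} \simeq k^m$, $\O/\mathfrak{m} \simeq k$, and $\dim_k\tilde{\O}/\O = m$ gives $\dim_k(\mathfrak{m}_{\tilde{\O}}/\mathfrak{m}) = 1$; that is, $\mathfrak{m}$ is a hyperplane in $\mathfrak{m}_{\tilde{\O}}$. This is the only place the genus-one hypothesis enters, and it is essentially condition (1) of Lemma \ref{L:mpoint}. On the other side, I will invoke the classical fact that for a one-dimensional reduced complete local ring with $\tilde{\O}$ finite over it one has $\dim_k \O/\mathfrak{c} \le \dim_k \tilde{\O}/\O$, with equality precisely when $\O$ is Gorenstein (see e.g.\ \cite{Serre}, Ch.~IV). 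Hence $\sum_i c_i = \dim_k\tilde{\O}/\mathfrak{c} = 2\delta = 2m$, while trivially each $c_i \ge 1$.

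Step (ii) is the heart of the matter. I claim no $c_\ell$ can equal $1$. If $c_\ell = 1$ then $(t_\ell)\,e_\ell \subseteq \mathfrak{c} \subseteq \mathfrak{m}$, and since $\mathfrak{m}$ is a hyperplane in $\mathfrak{m}_{\tilde{\O}}$ containing this direct summand, one gets a splitting $\O \simeq k[[t_\ell]] \times_k \O'$, where $\O'$ is the local ring obtained from $\O$ by deleting the $\ell$-th branch, with normalization $\widetilde{\O'} = \bigoplus_{i\ne\ell}k[[t_i]]$ and $\delta(\O') = \delta(\O) - 1$. A short direct computation shows that the conductor of such a fibre product is $(t_\ell)\,e_\ell \oplus \mathfrak{c}(\O')$, whence $\dim_k \tilde{\O}/\mathfrak{c}(\O) = 1 + \dim_k \widetilde{\O'}/\mathfrak{c}(\O') \le 1 + 2\delta(\O') = 2\delta(\O) - 1 < 2\delta(\O)$, contradicting that $\O$ is Gorenstein. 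Therefore every $c_i \ge 2$; combined with $\sum_i c_i = 2m$, every $c_i = 2$, so $\mathfrak{c} = \mathfrak{m}_{\tilde{\O}}^2$. In particular $\mathfrak{m}_{\tilde{\O}}^2 \subseteq \O$ (Lemma \ref{L:mpoint}(3)), so $\O \supseteq k \oplus \mathfrak{m}_{\tilde{\O}}^2$ and $\O$ is determined by the hyperplane $\overline{\mathfrak{m}} \subseteq \mathfrak{m}_{\tilde{\O}}/\mathfrak{m}_{\tilde{\O}}^2 \cong k^m$; moreover $\overline{\mathfrak{m}}$ cannot contain any coordinate line $k\,\overline{t_i}$, for otherwise $t_i\,e_i \in \O$ and the same splitting argument again contradicts Gorenstein — this is condition (2) of Lemma \ref{L:mpoint}.

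Finally, for (iii)--(iv): the group $\mathrm{Aut}(\tilde{\O}) = \bigl(\prod_i \mathrm{Aut}_k(k[[t_i]])\bigr) \rtimes S_m$ acts on the set of rings $\O$ above through its induced action on $\mathfrak{m}_{\tilde{\O}}/\mathfrak{m}_{\tilde{\O}}^2 \cong k^m$, which is the standard action of $(k^\times)^m \rtimes S_m$ by monomial matrices. A hyperplane $\{\sum_i b_i x_i = 0\}$ avoids every coordinate line iff all $b_i \ne 0$, and these form a single $(k^\times)^m$-orbit, so there is exactly one isomorphism class: the one with $\overline{\mathfrak{m}} = \{\sum_i x_i = 0\}$, which by Lemma \ref{L:mpoint} is the elliptic $m$-fold point. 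Choosing uniformizers and coordinates as in $(\dag)$ then yields the presentations in the statement: for $m \ge 4$ one reads off that $x_h x_i = x_h x_j$ for distinct $h,i,j$ (both equal the element supported to order two on the last branch), giving $I_m$, while for $m = 1,2,3$ one has $\O = k[[t^2,t^3]] \simeq k[[x,y]]/(y^2-x^3)$, $\O = k \oplus k(t_1,t_2) \oplus \mathfrak{m}_{\tilde{\O}}^2 \simeq k[[x,y]]/(y^2-yx^2)$, and the planar triple point respectively, by exhibiting explicit generators. I expect step (ii) — controlling the conductor, and in particular making airtight the dichotomy ``a length-one conductor branch forces a smooth factor, hence a node, hence $\delta=1$'' — to be the main obstacle; once $\mathfrak{c} = \mathfrak{m}_{\tilde{\O}}^2$ is known, everything reduces to linear algebra over $k$.
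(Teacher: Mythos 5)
Your proposal is correct and follows the same global strategy as the paper's appendix proof (direct classification of $\hat{\O}_{C,p}$ inside $k[[t_1]]\oplus\cdots\oplus k[[t_m]]$, using the numerical hypotheses plus Serre's criterion $\dim_k R/I_p=\dim_k\tilde{R}/R$ and a final rescaling of uniformizers), but the mechanism for the central step is genuinely different. The paper first extracts $m_{\tilde{R}}^2\subseteq R$ from genus one \emph{alone}, via the graded pieces $(\tilde{R}/R)^i$ and the multiplicativity observation that $(\tilde{R}/R)^1=0$ would force all higher pieces to vanish; only then does it use Gorenstein, in a two-line colength count, to identify the conductor with $m_{\tilde{R}}^2$ and rule out vanishing hyperplane coefficients. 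You instead work entirely through the conductor exponents: Gorenstein gives $\sum_i c_i=2\delta=2m$, and you rule out $c_\ell=1$ by the fibre-product splitting $\O\simeq k[[t_\ell]]\times_k\O'$ and the length bound $\dim\widetilde{\O'}/\mathfrak{c}(\O')\leq 2\delta(\O')$, getting $\mathfrak{c}=\mathfrak{m}_{\tilde{\O}}^2$ and the ``no coordinate line'' condition in one stroke; the endgame (a single torus orbit of hyperplanes with all coefficients nonzero) is then the coordinate-free version of the paper's Gaussian elimination and rescaling. Your route uses the Gorenstein hypothesis more heavily and is a bit longer at the splitting step, but it is cleaner in that it never needs the graded-piece bookkeeping; what it does not recover is the paper's observation that $m_{\tilde{R}}^2\subseteq R$ already holds for \emph{any} genus-one singularity, Gorenstein or not. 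Two small points to tighten: your conductor formula for the fibre product, $\mathfrak{c}(\O)=(t_\ell)e_\ell\oplus\mathfrak{c}(\O')$, is only valid when $\O'$ is not normal (which is automatic here since $\delta(\O')=m-1\geq 1$ for $m\geq 2$, while $m=1$ with $c_1=1$ gives $\O=k[[t_1]]$ and an immediate contradiction, so you should flag that case separately); and in the last step, exhibiting the relations $x_h(x_i-x_j)$, $y(y-x^2)$, etc.\ only shows the stated ideals are contained in the kernel of the presentation map, so the identification of the kernel still needs a (routine) verification --- though the paper's own proof asserts this at the same level of detail.
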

\begin{remark}
There are many non-isomorphic non-Gorenstein singularities of genus one with fixed number branches. Furthermore, in higher genus, there are many non-isomorphic Gorenstein singularities with fixed number branches.
\end{remark}
Combining these two propositions, we conclude
\begin{corollary}\label{C:gorenstein}
If $C$ is a Gorenstein curve with $p_a(C)=1$, and $p \in C$ is a singular point, then $p$ is either an ordinary node or an elliptic $m$-fold point for some integer $m$.
\end{corollary}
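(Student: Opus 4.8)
The plan is to deduce the statement directly from Propositions \ref{P:genus0} and \ref{P:genus1}, after checking two preliminary points: that $p$ is a Gorenstein curve singularity in the local sense, and that its genus $g(p)$ equals $0$ or $1$.

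First I would record that the singularity is Gorenstein. Since $C$ is Gorenstein the sheaf $\omega_C$ is invertible, so its stalk $\omega_{C,p}$ is a free $\O_{C,p}$-module of rank one; passing to completions, $\hat{\O}_{C,p}$ is a one-dimensional Cohen-Macaulay local ring whose canonical module is free of rank one, hence Gorenstein. This is exactly the hypothesis needed to apply Propositions \ref{P:genus0} and \ref{P:genus1}.

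Next I would pin down $g(p)$. One always has $\delta(p)\geq m(p)-1$, so $g(p)\geq 0$. For the reverse bound, let $\pi:\tilde{C}\to C$ be the normalization of $C$ at $p$ alone; additivity of Euler characteristic gives $\chi(\O_{\tilde{C}})=\chi(\O_C)+\delta(p)$. Since $C$ is connected of arithmetic genus one, $\chi(\O_C)=0$, while $\tilde{C}$ has at most $m(p)$ connected components, each a reduced connected complete curve of nonnegative arithmetic genus, so $\chi(\O_{\tilde{C}})\leq m(p)$. Hence $\delta(p)\leq m(p)$, i.e. $g(p)=\delta(p)-m(p)+1\leq 1$. (Alternatively, one may simply invoke the inequality $g(p)\leq p_a(C)$ noted at the opening of this appendix.)

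Finally I would split into cases. If $g(p)=1$, Proposition \ref{P:genus1} identifies $p$ with the elliptic $m$-fold point for $m=m(p)$. If $g(p)=0$, then $\delta(p)=m(p)-1$, and since $p$ is singular we have $\delta(p)\geq 1$, so $m(p)\geq 2$; then Proposition \ref{P:genus0} forces $m(p)=2$, i.e. $p$ is an ordinary node. This exhausts the possibilities and proves the corollary. There is no genuine obstacle internal to this argument---all the substance lies in Propositions \ref{P:genus0} and \ref{P:genus1}; the only steps calling for a word of justification are the passage from ``$\omega_C$ invertible'' to ``$\hat{\O}_{C,p}$ Gorenstein'' and the genus bookkeeping that confines $g(p)$ to $\{0,1\}$.
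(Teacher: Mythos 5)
Your proposal is correct and follows essentially the paper's own route: the paper derives the corollary by simply combining Propositions \ref{P:genus0} and \ref{P:genus1}, using the remark at the start of the appendix that $g(p)\leq p_a(C)$ to confine $g(p)$ to $\{0,1\}$, which is exactly your argument with the Euler-characteristic bookkeeping and the local Gorenstein reduction made explicit.
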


In order to prove the propositions, it will be useful to switch to ring-theoretic notation. Set
\begin{align*}
&R:=\hat{\O}_{C,p}.\\
&\tilde{R}:=\widetilde{R/P_1} \oplus \ldots \oplus \widetilde{R/P_{k(p)}},
\end{align*}
where $P_1, \ldots, P_{m}$ are the minimal primes of $R$, and $\widetilde{R/P_i}$ denotes the integral closure of $R/P_i$. Note that
$$
\tilde{R} \simeq k[[t_1]] \oplus \ldots \oplus k[[t_m]],
$$
since each $\widetilde{R/P_i}$ is a complete, regular local ring of dimension one over $k$. Let $m_R$ be the maximal ideal of $R$, and let $m_{\tilde{R}}$ be the ideal $(t_1) \oplus \ldots \oplus (t_m)$. Since $R$ is reduced, we have an embedding
\begin{align*}
R &\hookrightarrow \tilde{R},\\
m_R&=(m_{\tilde{R}} \cap R).
\end{align*}
In these terms, the \emph{conductor ideal} of the singularity is given by
$$
I_{p}:=\Ann_{R}(\tilde{R}/R),
$$
and $R$ is \emph{Gorenstein} iff (\cite{Serre})
$$
\dim_k (R/I_{p})=\dim_k(\tilde{R}/R).
$$

Note that the $R$-module $\tilde{R}/R$ has a natural grading given by powers of $m_{\tilde{R}}$; we define
$$
(\tilde{R}/R)^{i}=m_{\tilde{R}}^i/((m_{\tilde{R}}^i \cap R)+m_{\tilde{R}}^{i+1}),
$$
Now we have the following trivial observations:
\begin{itemize}
\item[(1)] $\delta(p)=\sum_{i \geq 0} \dim_{k} (\tilde{R}/R)^i$
\item[(2)] $g(p)=\sum_{i \geq 1} \dim_{k} (\tilde{R}/R)^i$
\item[(3)] $(\tilde{R}/R)^i=(\tilde{R}/R)^j=0 \implies (\tilde{R}/R)^{i+j}=0$ for any $i,j \geq 1$.
\end{itemize}
Having dispensed with these preliminaries, the proofs of propositions \ref{P:genus0} and \ref{P:genus1} are straightforward, albeit somewhat tedious. The basic idea is to find a basis for $m_{R}/m_{R}^2$ in terms of the local coordinates $t_1, \ldots, t_m$.

\begin{proof}[Proof of Proposition \ref{P:genus0}]
If $g(p)=0$, then $(\tilde{R}/R)^i=0$ for all $i>0$, so $m_{R}=m_{\tilde{R}}$. Thus, we may define a local homomorphism of complete local rings
\begin{align*}
k[[x_1, \ldots, x_m]]&\rightarrow R \subset k[[t_1]] \oplus \ldots \oplus k[[t_m]]\\
x_i &\rightarrow (0, \ldots,0, t_i, 0, \ldots 0)\\
\end{align*}
This homomorphism is surjective since it is surjective on tangent spaces, and the kernel is precisely the ideal
$$
I_{m}=(x_ix_j, i<j).
$$
To see that $R$ is Gorenstein iff $m=2$, note that the conductor ideal is
$$I_p=m_{R}.$$
Thus, the Gorenstein condition
$$
\dim_k(\tilde{R}/R)=\dim_{k}(R/I_p)
$$
is satisfied iff $\dim_k(\tilde{R}/R)=1$, i.e. when $m=2$.
\end{proof}

\begin{proof}[Proof of Proposition \ref{P:genus1}]
Since $g(p)=1$, observations (2) and (3) imply that
\begin{align*}
&\dim_{k} (\tilde{R}/R)^1=1\\
&\dim_{k} (\tilde{R}/R)^i=0 \text{ for all $i \geq 1$}.
\end{align*}
Put differently, this says that
$$
m_{R} \supset m_{\tilde{R}}^2,
$$
while
$$
m_{R}/m_{\tilde{R}}^2 \subset m_{\tilde{R}}/m_{\tilde{R}}^2
$$
is a codimension-one subspace. By Gaussian elimination, we may choose elements $f_1, \ldots, f_{m-1} \in m_{R}$ such that
\begin{equation*}
\left(
\begin{matrix}
f_1\\
\vdots\\
\vdots\\
f_{m-1}
\end{matrix}
\right)\equiv
\left(
\begin{matrix}
t_1& 0& \hdots  & 0 & a_1t_{m-1}\\
0&t_2& \ddots & \vdots & a_2t_{m-1} \\
\vdots& \ddots& \ddots& 0& \vdots  \\
0 & \hdots &0 & t_{m-2} & a_{m-1}t_{m-1}
\end{matrix}
\right)
\begin{matrix}
\mod m_{\tilde{R}}^2
\end{matrix}
\end{equation*}
for some $a_1, \ldots, a_{m-1} \in k.$ 
\begin{claim}
If $R$ is Gorenstein, we may take $a_1, \ldots, a_{m-1}=1$.
\end{claim}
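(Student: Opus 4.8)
The plan is to show that every $a_i$ is nonzero and then to normalize all of them to $1$ by a rescaling of uniformizers. The normalization is the easy half: once $a_1,\dots,a_{m-1}\in k^{*}$, replace the uniformizer $t_i$ by $a_i^{-1}t_i$ for $i=1,\dots,m-1$ (keeping $t_m$) and replace the generator $f_i$ by $a_i^{-1}f_i$. The latter still lies in $m_R$ since $k\subseteq R$, the $k$-span of $f_1,\dots,f_{m-1}$ in $m_R/m_{\tilde R}^2$ is unchanged, and $m_{\tilde R}^2$ is unchanged, so the displayed echelon normal form persists but with all the off-diagonal coefficients now equal to $1$. Thus the whole content of the Claim is the implication: $R$ Gorenstein $\implies a_i\neq 0$ for all $i$.

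The crux of that implication is to pin down the conductor $I_p$ exactly. Since $g(p)=1$ we have $\delta(p)=\dim_k(\tilde R/R)=m$; combining this with $\dim_k(\tilde R/m_{\tilde R}^2)=2m$ and the inclusion $m_{\tilde R}^2\subseteq R$ in the exact sequence $0\to R/m_{\tilde R}^2\to \tilde R/m_{\tilde R}^2\to \tilde R/R\to 0$ gives $\dim_k(R/m_{\tilde R}^2)=m$. On the other hand $m_{\tilde R}^2$ is an ideal of $\tilde R$ contained in $R$, hence $m_{\tilde R}^2\subseteq I_p$, so $\dim_k(R/I_p)\le\dim_k(R/m_{\tilde R}^2)=m=\dim_k(\tilde R/R)$. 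The Gorenstein hypothesis forces equality here, so $I_p=m_{\tilde R}^2$: the conductor is as small as the containment $m_{\tilde R}^2\subseteq I_p$ permits.

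Granting this, suppose $a_i=0$ for some $i$. Setting $a_i=0$ in the displayed normal form gives $f_i\equiv t_i\pmod{m_{\tilde R}^2}$, so, because $m_{\tilde R}^2\subseteq R$, the element of $\tilde R$ supported on the $i$-th branch with value $t_i$ — which I again denote $t_i$, as above — lies in $R$. But this element lies in $I_p$: it kills every branch other than the $i$-th, while on the $i$-th branch $t_i\cdot s=c\,t_i+(\text{an element of }(t_i^2))$, where $c\in k$ is the constant term of $s\in\tilde R$, and both summands lie in $R$. Since $t_i$ has order exactly one on the $i$-th branch, it is not in $m_{\tilde R}^2$, contradicting $I_p=m_{\tilde R}^2$. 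Hence all $a_i$ are nonzero, and the rescaling of the first paragraph finishes the proof. The only step I expect to require genuine care is the dimension count that pins down $I_p=m_{\tilde R}^2$; everything else is formal bookkeeping with the echelon normal form for $f_1,\dots,f_{m-1}$.
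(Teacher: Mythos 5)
Your proposal is correct and follows essentially the same route as the paper: use the Gorenstein equality $\dim_k(R/I_p)=\dim_k(\tilde R/R)$ together with $m_{\tilde R}^2\subseteq I_p$ and the count $\dim_k(R/m_{\tilde R}^2)=m$ to conclude $I_p=m_{\tilde R}^2$, then show that $a_i=0$ would place an order-one element (your $\tau_i$, the paper's $f_i$ itself) in the conductor, a contradiction, and finish by rescaling uniformizers. The only cosmetic difference is that you justify $\dim_k(R/m_{\tilde R}^2)=m$ explicitly via the exact sequence, which the paper asserts directly.
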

\begin{proof}[Proof of Claim]
First, let us show that $R$ Gorenstein implies $I_{p}=m_{\tilde{R}}^{2}$. Since $m_{R} \supset m_{\tilde{R}}^2$, we certainly have $I_p \supset m_{\tilde{R}}^2$. Thus,
$$
\dim (R/I_p) \leq \dim (R/m_{\tilde{R}}^2)=m.
$$
On the other hand, we have $\dim (\tilde{R}/R)=m$, so the Gorenstein equality  $\dim (R/I_p)=\dim (\tilde{R}/R)$ implies $\dim (R/I_p) = \dim R/m_{\tilde{R}}^2$, i.e. $I_p=m_{\tilde{R}}^2$.

In particular, we have $f_1, \ldots, f_{m-1} \notin I_p$. Now if $a_i=0$ then
$$
f_ig \in (f_i) + m_{\tilde{R}}^2 \subset R, \text{ for all $g \in \tilde{R}$,}
$$
i.e. $f \in I_p$. We conclude that $a_i \in k^{*}$ for each $i=1, \ldots, m$. Making a change of coordinates $t_i'=a_it_i$, we may assume that each $a_i=1$.
\end{proof}
At this point, the proof breaks into three cases:
\begin{itemize}
\item[I.] $(m \geq 3)$ We claim that $f_1, \ldots, f_{m-1}$ give a basis for $m_{R}/m_{R}^2$. Clearly, it is enough to show that $m_{R}^2=m_{\tilde{R}}^2$. Since $m_{R}^2 \supset m_{\tilde{R}}^4$, it is enough to show that
$$m_{R}^2/m_{\tilde{R}}^4 \hookrightarrow m_{\tilde{R}}^2/m_{\tilde{R}}^4$$ is surjective.
Using the matrix expressions for the $\{f_i\}$, one easily verifies that $f_1^{2}, \ldots, f_{m-1}^2, f_1f_2$ map to a basis of $m_{\tilde{R}}^2/m_{\tilde{R}}^3$, and $f_1^{3}, \ldots, f_{m-1}^3, f_1^2f_2$ map to a basis of $m_{\tilde{R}}^3/m_{\tilde{R}}^4$.

Since $f_1, \ldots, f_{m-1}$ give a basis of $m_{R}/m_{R}^2$, we have a surjective hoomomorphism
\begin{align*}
k[[x_1, \ldots, x_{m-1}]] &\rightarrow R \subset k[[t_1]] \oplus \ldots \oplus k[[t_m]]\\
x_i &\rightarrow (0, \ldots,0, t_i, 0, \ldots 0,t_{m-1}),
\end{align*}
and the kernel is precisely
$
I=(x_{h}(x_i-x_j) \text{ with } i,j,h \in \{1, \ldots, m-1\} \text{ distinct}).
$\\

\item[II.] $(m=2)$ By the preceeding analysis, there exists $f_1 \in m_{R}$ such that 
$$f_1 \equiv (t_1\,\,\, t_2) \mod m_{R}^2.$$
Since $m_{R} \supset m_{\tilde{R}}^2$, we may choose $f_2 \in m_{R}$ such that $f_1^2, f_2$ map to a basis of $m_{\tilde{R}}^2/m_{\tilde{R}}^3$. After Gaussian elimination, we may assume that
\begin{equation*}
\left(
\begin{matrix}
f_1^2\\

f_{2}
\end{matrix}
\right)\equiv
\left(
\begin{matrix}
t_1^2& t_2^2\\
0 & t_2^2\\
\end{matrix}
\right)
\begin{matrix}
\mod m_{\tilde{R}}^3
\end{matrix}
\end{equation*}
We claim that $f_1$ and $f_2$ form a basis for $m_{R}/m_{R}^2$. Since $f_1, f_2, f_1^2$ form a basis for $m_{R}/m_{\tilde{R}}^3$, it suffices to show that $m_{R}^2 \cap m_{\tilde{R}}^3=m_{\tilde{R}}^3$. Since $m_{R}^2 \supset m_{\tilde{R}}^4$, it is enough to show that
$$
(m_{R}^2 \cap m_{\tilde{R}}^3 )/m_{\tilde{R}}^4 \hookrightarrow m_{\tilde{R}}^3/m_{\tilde{R}}^4
$$
is surjective. From the matrix expression for the $\{f_i\}$, one easily sees that $f_1^3$ and $f_1f_2$ give a basis of $m_{\tilde{R}}^3/m_{\tilde{R}}^4$.

Since $f_1, f_2$ give a basis of $m_{R}/m_{R}^2$, we have a surjective homomorphism of complete local rings
\begin{align*}
k[[x,y]] &\rightarrow R \subset k[[t_1]] \oplus k[[t_2]]\\
x &\rightarrow (t_1,t_2),\\
y &\rightarrow (0,t_2^2),
\end{align*}
with kernel $y(y-x^2)$.\\

\item[III.] $(m=1)$ Since $m_{R}/m_{\tilde{R}}^2 \subset m_{\tilde{R}}/m_{\tilde{R}}^2$ is codimension-one, we have $m_{R}=m_{\tilde{R}}^2$. Thus, we may pick $f_1, f_2 \in m_{R}$ so that \begin{equation*}
\left(
\begin{matrix}
f_1\\
f_{2}
\end{matrix}
\right)\equiv
\left(
\begin{matrix}
t_1^2\\
t_1^3\\
\end{matrix}
\right)
\begin{matrix}
\mod m_{\tilde{R}}^4.
\end{matrix}
\end{equation*}
Since $m_{R}^2 = m_{\tilde{R}}^4$, $f_1$ and $f_2$ give a basis for $m_{R}/m_{R}^2$. Thus, the homomorphism
\begin{align*}
k[[x,y]]&\rightarrow R \subset k[[t_1]] \\
x &\rightarrow (t_1^2),\\
y &\rightarrow (t_1^3),
\end{align*}
is surjective, with kernel $y^2-x^3$.

\end{itemize}
\end{proof}

\end{document}